    \def\@begintheorem#1#2{\trivlist
       \item[\hskip \labelsep{\bfseries #1\ #2.}]\itshape}
    \def\@opargbegintheorem#1#2#3{\trivlist
       \item[\hskip \labelsep{\bfseries #1\ #2.\i#3)}]
       \itshape}
\newcommand{\qed}{\hfill \ensuremath{\Box}}
\newcommand{\Aut}{\mathrm{Aut}_{\mathrm{cfo}}^n(\mathfrak{g})}
\newcommand{\Auta}{\mathrm{Aut}_{\mathrm{cfo}}^n(\mathcal{A})}
\newcommand{\Lmh}{L_{\bm{m}}(\mathfrak{g}, \bm{\sigma}, \mathfrak{h})}
\newcommand{\Lmhb}{L_{\bm{m}'}(\mathfrak{g}', \bm{\sigma}', {\mathfrak{h}'})}
\newtheorem{dfn}{Definition}[subsection]
\newtheorem{thm}[dfn]{Theorem}
\newtheorem{lem}[dfn]{Lemma}
\newtheorem{rem}[dfn]{Remark}
\newtheorem{prop}[dfn]{Proposition}
\newtheorem{cor}[dfn]{Corollary}
\newtheorem{con}[dfn]{Construction}
\newcommand{\gn}{\mathfrak{g}}
\newcommand{\hz}{\mathfrak{h}}
\newcommand{\Lamm}{\bar{\Lambda}_{\bm{m}}}
\newcommand{\lam}{\bar{\lambda}}
\newcommand{\bmu}{\bar{\mu}}
\newcommand{\xal}{x_{\alpha}^{\bar{\lambda}}}
\newcommand{\mxal}{x_{-\alpha}^{-\bar{\lambda}}}
\newcommand{\sltwo}{\mathfrak{sl}_2(k)}
\newcommand{\auttheta}{\theta_{\alpha}^{\bar{\lambda}}}
\newcommand{\hzb}{\mathfrak{h}'}
\newcommand{\sigmas}{(\sigma_1,\dots,\sigma_n)}
\newcommand{\ms}{(m_1,\dots,m_n)}
\newcommand{\idcond}[2]{\bm{#1}^{\bm{#2}}=\mathbf{id}}
\newcommand{\gsigma}{\gn^{\bm{\sigma}}}
\newcommand{\lr}[2]{\bigl\langle \mathrm{supp}_{#1}(#2) \bigr\rangle} 
\renewcommand{\L}{\mathfrak{L}}
\newcommand{\B}{\mathfrak{B}}
\newcommand{\s}{\mathfrak{s}}
\newcommand{\Lb}{\mathcal{L}}
\newcommand{\C}{\mathfrak{C}}
\newcommand{\D}{\mathfrak{D}}
\newcommand{\Ab}{\mathcal{A}}
\newcommand{\Bb}{\mathcal{B}}
\newcommand{\igisom}{\cong_{\mathrm{ig}}}
\newcommand{\suppisom}{\cong_{\mathrm{supp}}}
\newcommand{\Mm}{M_{\bm{m}}(\Ab, \bm{\sigma})}
\newcommand{\Mmb}{M_{\bm{m'}}(\Ab', \bm{\sigma}')}
\newcommand{\zisom}{\cong_{\mathbb{Z}^n-\mathrm{su}}}
\begin{document}

\title{Multiloop Lie algebras and the construction of extended affine Lie algebras}
\author{Katsuyuki Naoi}
\date{}

\maketitle

\begin{abstract}
  It is known that a multiloop Lie algebra, which is constructed using multiloop realization,
  can be a Lie $\mathbb{Z}^n$-torus 
  if a given multiloop Lie algebra satisfies several conditions, 
  and it is also known that a family of extended affine Lie algebras (EALAs) is obtained 
  from a Lie $\mathbb{Z}^n$-torus.
  In many cases, however, even if a given multiloop Lie algebra does not satisfy these conditions,
  we can also construct a family of EALAs from it.
  In this paper, we study this construction, and prove that two families of EALAs constructed from
  two multiloop Lie algebras coincide up to isomorphisms as EALAs if and only if 
  two multiloop Lie algebras are ``support-isomorphic''.
  Also, we give a necessary and sufficient condition for two multiloop Lie algebras to be support-isomorphic.
\end{abstract}
\section{Introduction}
The multiloop realization is introduced in \cite{MR2418198}:
from an algebra $\Ab$ that is not necessarily associative or unital, a finite sequence of mutually commutative finite order
automorphisms $\bm{\sigma}=\sigmas$, and a sequence of positive integers $\bm{m}=\ms$ 
such that $\sigma_i^{m_i}=\mathrm{id}$ for $1 \le i \le n$, we can construct
a $\mathbb{Z}^n$-graded algebra $M_{\bm{m}}(\Ab, \bm{\sigma})$ called a multiloop algebra.

We consider the case where an algebra $\Ab$ is a finite dimensional simple Lie algebra $\gn$, and we assume that 
$\gsigma := \{ g \in \gn \mid \sigma_i(g)=g \ \mathrm{for \ all} \ i \} \neq \{ 0 \}$.
In this case since $\gsigma$ is reductive,
we can consider a root space decomposition of $M_{\bm{m}}(\gn, \bm{\sigma})$ 
with respect to a Cartan subalgebra $\hz$ in $\gsigma$, and then we can see
$M_{\bm{m}}(\gn, \bm{\sigma})$ as a $Q_{\hz} \times \mathbb{Z}^n$-graded Lie algebra where
$Q_{\hz}$ is a root lattice.
In this paper, we call the $Q_{\hz} \times \mathbb{Z}^n$-graded Lie algebra a multiloop Lie algebra,
and denote it by $\Lmh$.
In \cite{MR2506428}, the authors have proved that $\Lmh$ can be a Lie $\mathbb{Z}^n$-torus if $\bm{\sigma}$
satisfies some conditions (the principal condition is that $\gsigma$ is a simple Lie algebra),
and in that case it is called a multiloop Lie $\mathbb{Z}^n$-torus.
A Lie $\mathbb{Z}^n$-torus is a $Q \times \mathbb{Z}^n$-graded Lie algebra,
where $Q$ is a root lattice of an irreducible finite root system, satisfying several axioms.
E.\ Neher has proved in \cite{MR2083842} that if a centreless Lie $\mathbb{Z}^n$-torus is given,
we can construct a family of extended affine Lie algebras (EALAs, for short).
However, unless $\gsigma = \{ 0 \}$, we can construct a family of EALAs from $\Lmh$
even if $\bm{\sigma}$ does not satisfy the condition for $\Lmh$ to be a Lie $\mathbb{Z}^n$-torus.
This fact can be seen by proving that the $Q_{\hz}$-support of $\Lmh$ with respect to $\hz$ is an
irreducible finite root system.
In this paper, we study this construction of a family of EALAs from a multiloop Lie algebra.

In \cite{MR2743759}, it has been proved that there exists a one-to-one correspondence 
between centreless Lie $\mathbb{Z}^n$-tori up to isotopy and 
families of EALAs up to isomorphism, where isotopy is an equivalence relation 
on a class of Lie $\mathbb{Z}^n$-tori defined in \cite{MR2506428}.
In this paper, we see that the similar result is obtained in the case of multiloop Lie algebras;
we define an equivalence relation ``support-isomorphic'' on the class of multiloop Lie algebras
(see Definition \ref{supp-isom}), and then we prove that two families of EALAs constructed from two multiloop Lie algebras
coincide up to isomorphism if and only if two multiloop Lie algebras are support-isomorphic.
Also, we give a necessary and sufficient condition for two multiloop Lie algebras to be support-isomorphic.

As we prove in Theorem \ref{Main theorem}, a multiloop Lie algebra $\Lmh$ is support-isomorphic 
to some Lie $\mathbb{Z}^n$-torus if and only if $\gsigma \neq \{ 0 \}$.
From this fact, we can see that the class of EALAs which can be constructed from multiloop Lie algebras 
coincides with that constructed from multiloop Lie $\mathbb{Z}^n$-tori.
It is, however, expected that, at least in some cases, considering whole multiloop Lie algebras makes
it easy to study the classification problem of EALAs.

We briefly outline the contents of this paper.
In section 2, we recall the definition and some results of multiloop algebras,
and define support-isomorphism.
In section 3, we define a multiloop Lie algebra $\Lmh$, and study the properties of the support of 
a $Q_{\hz}$-grading.
In section 4, we study a support-isomorphism of multiloop Lie algebras.
In section 5, we give a necessary and sufficient condition for a multiloop Lie algebra
to be support-isomorphic to some Lie $\mathbb{Z}^n$-torus, and finally, we study
the construction of EALAs from a multiloop Lie algebra. \\ \\  
\textbf{Assumptions and Notation.} 
\begin{enumerate}
  \item[(a)] Throughout this paper all vector spaces and algebras are defined over a base field $k$ 
             of characteristic 0 and we assume that $k$ is \textit{algebraically closed}.
             In this paper an algebra is not necessarily associative or unital. 
  \item[(b)] For each $n \in \mathbb{Z}_{>0}$, we choose a primitive $n$-th root of unity $\zeta_n\in k$  
             satisfying the following condition: for all $m,n \in \mathbb{Z}_{>0}$,
             \begin{equation}\label{46}
               \zeta_{mn}^{m}=\zeta_{n}. 
             \end{equation} 
  \item[(c)] For an n-tuple of positive integers $\bm{m}=(m_1,\dots,m_n)$, let 
             \[ \Lamm = \mathbb{Z}/m_1\mathbb{Z}\times\dots\times\mathbb{Z}/m_n\mathbb{Z}. 
             \] 
  \item[(d)] For a group $\Lambda$ and a subset $S \subseteq \Lambda$, 
             let $\langle S \rangle$ be a subgroup of $\Lambda$ generated by $S$. 
  \item[(e)] If $\Bb=\bigoplus_{\lambda \in \Lambda} \Bb^{\lambda}$
             is a $\Lambda$-graded algebra for some abelian group $\Lambda$, we put
             \[ \mathrm{supp}_{\Lambda}(\Bb) = \{ \lambda \in \Lambda \mid 
                \Bb^{\lambda}\neq \{ 0 \} \} \subseteq \Lambda. 
             \]
\end{enumerate}
\section{Multiloop algebras} \label{section1}
Although we are interested only in Lie algebras, we deal with general algebras in this section.
\subsection{Definitions and some results}
First, we recall the following basic definitions.
%
\begin{dfn} \label{centroid} \normalfont
  Suppose that $\Ab$ is an algebra.  
  \begin{enumerate}
    \item[(a)]
      Let $C(\Ab)$ be the subalgebra of $\mathrm{End}_{k}(\Ab)$ consisting of the $k$-linear endomorphisms
      of $\Ab$ that commute with all left and right multiplications by elements of $\Ab$.
      We call $C(\Ab)$ the \textit{centroid} of $\Ab$. 
    \item[(b)] 
      We say $\Ab$ is \textit{central-simple} if $\Ab$ is simple and $C(\Ab)=k \cdot \mathrm{id}$.
  \end{enumerate}
\end{dfn}
%
%
\begin{dfn} \label{def of graded-simple} \normalfont
  Let $\Lambda$ be an abelian group and $\Bb= \bigoplus_{\lambda \in \Lambda}\Bb^{\lambda}$ 
  be a $\Lambda$-graded algebra. 
  \begin{enumerate}
    \item[(a)] We say $\Bb$ is \textit{graded-simple} 
               if $\Bb \Bb \neq \{ 0 \}$ and graded ideals of $\Bb$ are only $\{ 0 \}$ and $\Bb$. 
    \item[(b)] Suppose that $\Bb$ is graded-simple.
               Then $C(\Bb)=\bigoplus_{\lambda \in \Lambda}C(\Bb)^{\lambda}$ 
               is a unital commutative associative $\Lambda$-graded algebra where
               \[ C(\Bb)^{\lambda}=\{ c \in C(\Bb) \mid c\Bb^{\mu} 
               \subseteq \Bb^{\lambda+\mu} \quad \mathrm{for} \ \mu \in \Lambda \},
               \]
               and $\Gamma_{\Lambda}(\Bb):=\mathrm{supp}_{\Lambda}(C(\Bb))$
               is a subgroup of $\Lambda$ \cite[Proposition 2.16]{MR2193194}.
               We call $\Gamma_{\Lambda}(\Bb)$ the \textit{central grading group} of $\Bb$.
               We say $\Bb$ is \textit{graded-central-simple} if $\Bb$ is graded-simple 
               and $C(\Bb)^0=k \cdot \mathrm{id}$.
  \end{enumerate}
\end{dfn}  
%
%
\begin{dfn} \label{dfn of two isom} \normalfont
  Let $\Lambda, \Lambda'$ be abelian groups. 
  \begin{enumerate}
    \item[(a)]
      Suppose that $\Bb$ and $\Bb'$ are $\Lambda$-graded algebras. 
      Then we say $\Bb$ and $\Bb'$ are \textit{$\Lambda$-graded-isomorphic} 
      if there exists an algebra isomorphism $\varphi: \Bb \to \Bb'$ such that 
      \[ \varphi(\Bb^{\lambda})={\Bb'}^{\lambda}
      \]
      for $\lambda \in \Lambda$.
      In that case, we call $\varphi$ a \textit{$\Lambda$-graded-isomorphism},
      and we write $\Bb \cong_{\Lambda} \Bb'$. 
    \item[(b)]
      Suppose that $\Bb$ is a $\Lambda$-graded algebra and $\Bb'$ is a $\Lambda'$-graded algebra. 
      Then we say $\Bb$ and $\Bb'$ are \textit{isograded-isomorphic} 
      if there exist an algebra isomorphism $\varphi: \Bb \to \Bb'$ 
      and a group isomorphism $\varphi_{\Lambda}: \Lambda \to \Lambda'$ such that 
      \[ \varphi(\Bb^{\lambda})={\Bb'}^{\varphi_{\Lambda}(\lambda)}
      \]
      for $\lambda \in \Lambda$. In that case we call $\varphi$ an \textit{isograded-isomorphism},
      and we write $\Bb \cong _{\mathrm{ig}} \Bb'$. 
  \end{enumerate}
\end{dfn}
%

To define a multiloop algebra, we use the following notation.
Suppose that $\Ab$ is an algebra.
We denote the set of $n$-tuples of commuting finite order automorphisms of $\Ab$
\[ \{(\sigma_1,\dots,\sigma_n)\in\mathrm{Aut}(\Ab)^n \mid \sigma_i\sigma_j=\sigma_j\sigma_i, \
   \mathrm{ord}(\sigma_i)<\infty \ \ \mathrm{for \ all} \ i,j \}
   \]
by $\mathrm{Aut}_{\mathrm{cfo}}^{n}(\Ab)$.
For $\bm{\sigma}=\sigmas \in \Auta$, we put 
\[ \Ab^{\bm{\sigma}}= \{ u \in \Ab \mid \sigma_i(u) = u \quad \mathrm{for} \ 1 \le i \le n \},
\]
and we write $\mathrm{ord}(\bm{\sigma})=(\mathrm{ord}(\sigma_1),\dots,\mathrm{ord}(\sigma_n)) \in
\mathbb{Z}_{>0}^n$.

A multiloop algebra has been defined in \cite{MR2418198} as follows:
%
\begin{dfn} \label{multiloop algebra} \normalfont
  Suppose that $\Ab$ is an algebra. 
  Let $n\in \mathbb{Z}_{>0}$, and assume that
  $\bm{\sigma}=(\sigma_1,\dots,\sigma_n)\in\Auta$ 
  and $\bm{m}=(m_1,\dots,m_n) \in \mathbb{Z}^n_{>0}$ satisfy
  \[ \sigma_i^{m_i}=\mathrm{id} \quad \mathrm{for} \quad 1 \le i \le n.
  \]  
  (Henceforth, we write $\idcond{\sigma}{m}$ to denote this condition). 
  Note that we do not necessarily assume that each $m_i$ is an order of $\sigma_i$.
  For $\lambda =(l_1, \dots ,l_n) \in \mathbb{Z}^n$, let 
  \[ \bar{\lambda}=(\bar{l_1},\dots ,\bar{l_n}) \in
  \Lamm(=\mathbb{Z}/m_1\mathbb{Z}\times\dots\times\mathbb{Z}/m_n\mathbb{Z})
  \]
  be the image of $\lambda$ under the canonical group homomorphism from $\mathbb{Z}^n$ onto $\Lamm$.
  Using $\bm{\sigma}$ and $\bm{m}$, we define a $\bar{\Lambda}_{\bm{m}}$-grading on 
  $\Ab$ as follows:
  for $\bar{\lambda}=(\bar{l_1},\dots,\bar{l_n})\in\bar{\Lambda}_{\bm{m}}$, 
  \begin{equation}\label{1}
    \Ab^{\bar{\lambda}_{(\bm{\sigma},\bm{m})}}=\{u\in\Ab \mid \sigma_{i}(u)
  =\zeta_{m_i}^{l_i}u \quad \mathrm{for} \ 1\le i \le n \}.
  \end{equation}
  (We usually use a notation $\Ab^{\bar{\lambda}}$ instead of $\Ab^{\bar{\lambda}_{(\bm{\sigma},\bm{m})}}$
  when it is obvious from the context that $\Ab$ is graded using $\bm{\sigma}$ and $\bm{m}$). 
  Then we can define a $\mathbb{Z}^n$-graded algebra 
  \begin{equation} \label{57}
    M_{\bm{m}}(\Ab, \bm{\sigma})=\bigoplus_{\lambda \in \mathbb{Z}^n}
    \Ab^{\bar{\lambda}} \otimes t^{\lambda}   \subseteq \Ab \otimes k[t_1^{\pm1},\dots,t_n^{\pm1}]
  \end{equation}
  where for $\lambda=(l_1,\dots,l_n)$, we put $t^{\lambda}=t_1^{l_1}t_2^{l_2}\dots t_n^{l_n}$.
  We call the $\mathbb{Z}^n$-graded algebra $M_{\bm{m}}(\Ab, \bm{\sigma})$ the \textit{multiloop algebra}
  of $\bm{\sigma}$ (based on $\Ab$ and relative to $\bm{m}$).
  We call $n$ the \textit{nullity} of $M_{\bm{m}}(\Ab, \bm{\sigma})$.
\end{dfn}
%

By \cite[Proposition 8.2.2]{MR2418198}, we have the following:
%
\begin{lem} \label{central grading group}
  Suppose that $\Ab$ is a central-simple algebra, 
  and $\Bb=M_{\bm{m}}(\Ab, \bm{\sigma})$ is a multiloop algebra of $\bm{\sigma} \in \Auta$ 
  relative to $\bm{m}=\ms \in \mathbb{Z}^n_{>0}$ where $\bm{\sigma}^{\bm{m}}= \bm{\mathrm{id}}$.
  Then $\Bb$ is a graded-central-simple $\mathbb{Z}^n$-graded algebra, and
  \[ \Gamma_{\mathbb{Z}^n}(\Bb)=m_1\mathbb{Z} \times \dots \times m_n \mathbb{Z} \subseteq \mathbb{Z}^n. 
  \]
  where $\Gamma_{\mathbb{Z}^n}(\Bb)$ is the central grading group of $\Bb$. 
  In particular, the rank of $\Gamma_{\mathbb{Z}^n}(\Bb)$ is $n$. 
\end{lem}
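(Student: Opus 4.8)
The plan is to compare $\Bb = \Mm$ with the ``untwisted'' algebra $\widetilde{\Bb} := \Ab \otimes_{k} R$, where $R := k[t_1^{\pm 1}, \dots, t_n^{\pm 1}]$, via the inclusion $\Bb \subseteq \widetilde{\Bb}$ of Definition \ref{multiloop algebra} and the graded subring $S := k[t_1^{\pm m_1}, \dots, t_n^{\pm m_n}] \subseteq R$; since $\bm{\sigma}^{\bm{m}} = \bm{\mathrm{id}}$, multiplication on the $R$-factor by $S$ stabilises $\Bb$, so $\Bb$ is an $S$-subalgebra of $\widetilde{\Bb}$. I begin with two preliminary observations. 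First, central-simplicity of $\Ab$ forces $\Ab \Ab = \Ab$, because $\Ab \Ab$ is a nonzero ideal of the simple algebra $\Ab$. Second, grading $\widetilde{\Bb}$ by $\mathbb{Z}^n$ with $\Ab \otimes t^{\lambda}$ in degree $\lambda$ (which extends the $\mathbb{Z}^n$-grading of $\Bb$), the $R$-algebra homomorphism $\Bb \otimes_S R \to \widetilde{\Bb}$, $x \otimes r \mapsto x r$, is a $\mathbb{Z}^n$-graded isomorphism: it is surjective because $(\Ab^{\lam} \otimes t^{\lambda}) \cdot t^{-\lambda} = \Ab^{\lam} \otimes 1$, so its image contains $\Ab \otimes 1$; and it is injective because, fixing for each $\lam \in \Lamm$ a lift $\lambda_0(\lam) \in \mathbb{Z}^n$, the algebra $\Bb$ is $S$-free via $\Bb \cong \bigoplus_{\lam} \Ab^{\lam} \otimes_k S$, and the map carries the summand $\Ab^{\lam} \otimes t^{\lambda_0(\lam)} \otimes_S R$ by the multiplication-by-$t^{\lambda_0(\lam)}$ bijection onto $\Ab^{\lam} \otimes_k R$. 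Finally, $R$ is $S$-free of rank $m_1 \cdots m_n$, hence faithfully flat over $S$.

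Granting this, graded-simplicity of $\Bb$ follows by faithfully flat descent. The $\mathbb{Z}^n$-graded algebra $\widetilde{\Bb} = \Ab \otimes_k R$ is graded-simple: a $\mathbb{Z}^n$-graded ideal $J$ has $J_{\lambda} = V_{\lambda} \otimes t^{\lambda}$, and testing the ideal condition against degree-$0$ multipliers shows each $V_{\lambda}$ is an ideal of $\Ab$, so $V_{\lambda} \in \{\{0\}, \Ab\}$; moreover if $V_{\lambda_0} = \Ab$ for some $\lambda_0$ then $\Ab = \Ab\Ab \subseteq V_{\lambda}$ for all $\lambda$, whence $J \in \{\{0\}, \widetilde{\Bb}\}$, and $\widetilde{\Bb}\widetilde{\Bb} = \Ab\Ab \otimes_k R \ne \{0\}$. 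Now if $I$ is a nonzero $\mathbb{Z}^n$-graded ideal of $\Bb$, then $I \otimes_S R$ is a $\mathbb{Z}^n$-graded ideal of $\Bb \otimes_S R \cong \widetilde{\Bb}$, nonzero by faithful flatness, hence equal to $\widetilde{\Bb}$; therefore $(\Bb/I) \otimes_S R = 0$, and faithful flatness forces $I = \Bb$. The same base change gives $\Bb\Bb \ne \{0\}$. So $\Bb$ is graded-simple, and its centroid $C(\Bb) = \bigoplus_{\lambda \in \mathbb{Z}^n} C(\Bb)^{\lambda}$ is a commutative $\mathbb{Z}^n$-graded algebra (Definition \ref{def of graded-simple}).

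It remains to compute $C(\Bb)$. Let $c \in C(\Bb)^{\lambda}$. For two lifts $\mu, \mu'$ of $\bmu \in \Lamm$ we have $\mu' - \mu \in m_1\mathbb{Z} \times \dots \times m_n\mathbb{Z}$ and $t^{\mu'-\mu} \cdot \mathrm{id} \in C(\Bb)$; since $C(\Bb)$ is commutative, $c$ therefore acts on every $\Ab^{\bmu} \otimes t^{\mu}$ through a single linear map $\phi_{\bmu}\colon \Ab^{\bmu} \to \Ab^{\overline{\mu+\lambda}}$ depending only on $\bmu$, via $c(u \otimes t^{\mu}) = \phi_{\bmu}(u) \otimes t^{\mu+\lambda}$. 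Applying the centroid conditions of $c$ to products $(a \otimes t^{\alpha})(b \otimes t^{\beta})$, these collapse to $\phi(ab) = a\,\phi(b) = \phi(a)\,b$ for all $a, b \in \Ab$, where $\phi := \bigoplus_{\bmu}\phi_{\bmu}$; that is, $\phi \in C(\Ab)$, and by construction $\phi(\Ab^{\bmu}) \subseteq \Ab^{\overline{\mu+\lambda}}$, so $\phi$ is homogeneous of degree $\lam$ for the $\Lamm$-grading of $\Ab$. But $\Ab$ is central-simple, so $C(\Ab) = k \cdot \mathrm{id}$ is concentrated in degree $\bar 0$; hence $\phi = 0$ unless $\lam = \bar 0$, and if $\lam = \bar 0$ then $\phi \in k \cdot \mathrm{id}$, i.e.\ $c \in k \cdot (t^{\lambda} \cdot \mathrm{id})$. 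Conversely $t^{\lambda} \cdot \mathrm{id} \in C(\Bb)$ whenever $\lam = \bar 0$. Thus
\[
 C(\Bb) = \bigoplus_{\lambda \in m_1\mathbb{Z} \times \dots \times m_n\mathbb{Z}} k \cdot (t^{\lambda} \cdot \mathrm{id}),
\]
so $C(\Bb)^0 = k \cdot \mathrm{id}$ (hence $\Bb$ is graded-central-simple) and $\Gamma_{\mathbb{Z}^n}(\Bb) = \mathrm{supp}_{\mathbb{Z}^n}\bigl(C(\Bb)\bigr) = m_1\mathbb{Z} \times \dots \times m_n\mathbb{Z}$, which has rank $n$.

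The step I expect to be the main obstacle is the first one: turning the identification $\Bb \otimes_S R \cong \Ab \otimes_k R$ into a genuine $\mathbb{Z}^n$-graded algebra isomorphism requires carrying along the non-canonical choice of lifts $\lambda_0(\lam)$ and checking that everything goes through uniformly, also when $\Ab$ is infinite-dimensional. After that, the graded-simplicity descent and the reduction of the centroid computation to $C(\Ab) = k \cdot \mathrm{id}$ are routine bookkeeping. (Alternatively, the entire statement is \cite[Proposition 8.2.2]{ABFP1}.)
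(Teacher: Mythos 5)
Your centroid computation is sound: once graded-simplicity is in hand, $C(\Bb)$ is commutative and $\mathbb{Z}^n$-graded, the maps $t^{\nu}\cdot\mathrm{id}$ with $\bar{\nu}=\bar{0}$ lie in $C(\Bb)$, and your reduction of a homogeneous centroidal element to a $\Lamm$-homogeneous element of $C(\Ab)=k\cdot\mathrm{id}$ correctly yields $C(\Bb)=\bigoplus_{\lambda\in m_1\mathbb{Z}\times\dots\times m_n\mathbb{Z}}k\,(t^{\lambda}\cdot\mathrm{id})$ and hence the stated central grading group. (For comparison: the paper gives no argument at all here, it simply quotes \cite[Proposition 8.2.2]{ABFP1}, so any self-contained proof is a different route by default.) The step you flagged as the likely obstacle, namely the graded isomorphism $\Bb\otimes_S R\cong\Ab\otimes_k R$ with $R$ faithfully flat (indeed free of rank $m_1\cdots m_n$) over $S$, is fine as you set it up.

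The genuine gap is in the descent step for graded-simplicity. To form $I\otimes_S R$ and, above all, $(\Bb/I)\otimes_S R$, you need the graded ideal $I$ to be an $S$-submodule of $\Bb$, i.e.\ stable under multiplication by $t^{\pm m_i}$. This is not automatic: $S$ acts on $\Bb$ through the centroid, not through left or right multiplication by elements of $\Bb$, and in the non-unital case that matters here (Lie algebras, where moreover $\Ab^{\bar{0}}=\gn^{\bm{\sigma}}$ may even be $\{0\}$) there is no element $1\otimes t^{\nu}\in\Bb$ realizing this action internally; ideals of non-unital algebras need not be centroid-stable. A posteriori every nonzero graded ideal equals $\Bb$ and so is trivially $S$-stable, but that is exactly the statement being proved, so it cannot be invoked. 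The repair is short and stays within your framework: writing $I^{\lambda}=V_{\lambda}\otimes t^{\lambda}$, set $\hat{I}=\sum_{\nu}t^{\nu}I$ and $\tilde{I}=\bigcap_{\nu}t^{\nu}I$, the sum and intersection over $\nu\in m_1\mathbb{Z}\times\dots\times m_n\mathbb{Z}$. Both are $S$-stable graded ideals with $\tilde{I}\subseteq I\subseteq\hat{I}$, and applying the ideal property of $I$ to every integer lift of a fixed class in $\Lamm$ gives $\Bb\hat{I}+\hat{I}\Bb\subseteq\tilde{I}$. Your faithfully flat descent now applies verbatim to $\hat{I}$ and $\tilde{I}$: $I\neq\{0\}$ forces $\hat{I}=\Bb$, then $\tilde{I}\supseteq\Bb\Bb\neq\{0\}$ forces $\tilde{I}=\Bb$, hence $I=\Bb$. (In fact, once one works with the components $V_{\lambda}$, the subspaces $W_{\bar{\lambda}}=\sum_{\kappa\equiv\lambda}V_{\kappa}$ and $U_{\bar{\lambda}}=\bigcap_{\kappa\equiv\lambda}V_{\kappa}$ satisfy $\Ab W+W\Ab\subseteq U\subseteq W$ with $W$ a nonzero ideal of $\Ab$, and simplicity of $\Ab$ together with $\Ab\Ab=\Ab$ already give $U=\Ab$ and $I=\Bb$, with no flatness needed at all.)
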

%

We use the following notation.
Let $\Ab$ be an algebra and $\bm{\sigma}=\sigmas \in \Auta$.  
For $P = (p_{ij})\in \mathrm{GL}_n(\mathbb{Z})$, we set 
\[ \bm{\sigma}^{P}=(\prod_{1 \le i \le n} \sigma_i^{p_{i1}},
   \prod_{1 \le i \le n} \sigma_i^{p_{i2}}, \dots ,\prod_{1 \le i \le n} \sigma_i^{p_{in}}).
\]
Since $\sigma_i$'s commute with each other and each $\sigma_i$ has a finite order, 
$\bm{\sigma}^P \in \Auta$. 
It is easy to check that 
$(\bm{\sigma}^P)^Q=\bm{\sigma}^{PQ}$ for $P,Q \in \mathrm{GL}_n(\mathbb{Z})$.
Therefore, $P: \bm{\sigma} \mapsto \bm{\sigma}^P$ 
defines a right $\mathrm{GL}_n(\mathbb{Z})$-action on $\Auta$. 
If $\Ab'$ is another algebra and $\varphi: \Ab \to \Ab'$ is an algebra isomorphism, we write
\[ \varphi\bm{\sigma} \varphi^{-1}=(\varphi\sigma_1\varphi^{-1},\dots,\varphi\sigma_n\varphi^{-1})
   \in \mathrm{Aut}_{\mathrm{cfo}}^n(\Ab').
\]

The following definition is introduced in \cite[Definition 8.1.1]{MR2418198} (In the definition, we let 
$\mathrm{diag}(a_1, \dots, a_n)$ denote an $n$-diagonal matrix
with the diagonal entries $(a_1, \dots, a_n)$):
%
\begin{dfn}\label{def4}  \normalfont
For $\bm{m}=(m_1,\dots,m_n) \in \mathbb{Z}_{>0}^n$ and $\bm{m}'=(m_1',\dots,m_n') \in \mathbb{Z}_{>0}^n$,
we set $D_{\bm{m}}=\mathrm{diag}(m_1,\dots,m_n)$,
$D_{\bm{m}'}=\mathrm{diag}(m_1',\dots,m_n')$.
For $P \in \mathrm{GL}_{n}(\mathbb{Z})$, 
we say that $P$ is \textit{$(\bm{m}', \bm{m})$-admissible} 
if $D_{\bm{m}'}{}^t \! PD_{\bm{m}}^{-1} \in \mathrm{GL}_n(\mathbb{Z})$ 
where ${}^t \! P$ is a transpose of $P$.
\end{dfn}
%
%
\begin{prop} \label{realization theorem}
  Suppose that $\Ab$ and $\Ab'$ are central-simple algebras.  
  Assume that $\bm{\sigma} \in \Auta$, $\bm{\sigma}' \in 
  \mathrm{Aut}_{\mathrm{cfo}}^n(\mathcal{A}')$ and $\bm{m}, \bm{m}' \in \mathbb{Z}_{>0}^n$ satisfy
  $\idcond{\sigma}{m}, {\bm{\sigma}'}^{\bm{m}'}= \bm{\mathrm{id}}$.
  Let $\Bb=M_{\bm{m}}(\Ab, \bm{\sigma}), \Bb'=M_{\bm{m}'}(\Ab', \bm{\sigma}')$.
  Then the following two statements are equivalent: 
  \begin{enumerate}
    \item[\textup{(a)}] $\Bb \cong_{\mathrm{ig}} \Bb'$. 
    \item[\textup{(b)}] 
      There exist a matrix $P \in \mathrm{GL}_n(\mathbb{Z})$ and an algebra isomorphism
      $\varphi: \Ab \to \Ab'$ such that $P$ is $(\bm{m}', \bm{m})$-admissible and 
      \begin{equation} \label{0}
        \bm{\sigma}'= \varphi \bm{\sigma}^P \varphi^{-1}. 
      \end{equation}
  \end{enumerate}
  Moreover, if $P$ and $\varphi$ satisfy \textup{(b)},
  we can take an isograded-isomorphism $\psi: \Bb \to \Bb'$
  satisfying $\psi(x \otimes 1)=\varphi(x) \otimes 1$ for $x \in \Ab^{\bm{\sigma}}$. 
\end{prop}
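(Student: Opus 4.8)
The plan is to prove the equivalence in both directions, relying on the theory of graded-central-simple algebras and their coordinatization.

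\medskip

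\emph{From (b) to (a), with the extra compatibility.} Suppose $P$ and $\varphi$ satisfy \eqref{0}. First I would observe that, because $P$ is $(\bm{m}',\bm{m})$-admissible, the linear map $t^{\lambda}\mapsto t^{{}^tP\lambda}$ (extended suitably) carries the $\bm{m}$-grading data to the $\bm{m}'$-grading data in a way compatible with $P$; more precisely, for $u\in\Ab^{\bar\lambda_{(\bm{\sigma},\bm{m})}}$ one checks directly from \eqref{1} that $\varphi(u)\in{\Ab'}^{\overline{{}^tP\lambda}_{(\bm{\sigma}',\bm{m}')}}$, using $\zeta_{m_i}^{l_i}$ versus the defining relation for $\bm{\sigma}'=\varphi\bm{\sigma}^P\varphi^{-1}$ together with the normalization \eqref{46} on roots of unity (this is where admissibility is needed to land back in $\mathbb{Z}^n$ after applying ${}^tP$ and rescaling). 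Then I would define $\psi$ on $M_{\bm{m}}(\Ab,\bm{\sigma})\subseteq \Ab\otimes k[t_1^{\pm 1},\dots,t_n^{\pm 1}]$ by $\psi(u\otimes t^{\lambda})=\varphi(u)\otimes t^{{}^tP\lambda}$, check it is a well-defined algebra homomorphism into $M_{\bm{m}'}(\Ab',\bm{\sigma}')$ (the image conditions guarantee it lands in the right graded pieces), that it is bijective (its inverse is built from $\varphi^{-1}$ and $P^{-1}$), and that $\psi$ is graded for the group isomorphism $\varphi_{\mathbb{Z}^n}={}^tP$. Since $\Ab^{\bm{\sigma}}=\Ab^{\bar 0}$ sits in degree $\lambda=0$ and ${}^tP\cdot 0=0$, we get $\psi(x\otimes 1)=\varphi(x)\otimes 1$ for $x\in\Ab^{\bm{\sigma}}$, which is the asserted compatibility.

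\medskip

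\emph{From (a) to (b).} Here I would invoke Lemma \ref{central grading group}: both $\Bb$ and $\Bb'$ are graded-central-simple $\mathbb{Z}^n$-graded algebras with central grading groups $\Gamma_{\mathbb{Z}^n}(\Bb)=m_1\mathbb{Z}\times\cdots\times m_n\mathbb{Z}$ and $\Gamma_{\mathbb{Z}^n}(\Bb')=m_1'\mathbb{Z}\times\cdots\times m_n'\mathbb{Z}$. Given an isograded-isomorphism $\psi:\Bb\to\Bb'$ with associated group isomorphism $\psi_{\mathbb{Z}^n}:\mathbb{Z}^n\to\mathbb{Z}^n$, I would first note $\psi_{\mathbb{Z}^n}$ must carry $\Gamma_{\mathbb{Z}^n}(\Bb)$ onto $\Gamma_{\mathbb{Z}^n}(\Bb')$ (it is intrinsically defined from the graded algebra structure via the centroid). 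Writing $\psi_{\mathbb{Z}^n}$ as a matrix $Q\in\GL$, this says $Q$ maps $D_{\bm{m}}\mathbb{Z}^n$ onto $D_{\bm{m}'}\mathbb{Z}^n$, which is exactly the admissibility condition on $P:={}^tQ$ (unwinding Definition \ref{def4}: $D_{\bm{m}'}{}^tP D_{\bm{m}}^{-1}=D_{\bm{m}'}QD_{\bm{m}}^{-1}\in\GL$). Next, I would recover $\varphi$ from $\psi$: the degree-zero component $\Bb^0=\Ab^{\bm{\sigma}}\otimes 1$ maps to ${\Bb'}^0={\Ab'}^{\bm{\sigma}'}\otimes 1$, but more is needed since we must produce a genuine algebra isomorphism $\Ab\to\Ab'$ and verify \eqref{0}. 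The standard device is the \emph{central closure}: $\Bb$ (being graded-central-simple) has centroid $C(\Bb)\cong k[\Gamma_{\mathbb{Z}^n}(\Bb)]$, a Laurent polynomial ring, and inverting the homogeneous central elements recovers $\Ab\otimes_k k[t^{\pm}]$ as the degree-zero part of a suitable localization — precisely the mechanism behind the realization of forms/multiloop algebras in \cite{ABFP1}. I would extend $\psi$ to the corresponding localizations, observe it induces an isomorphism of central closures, and restrict to obtain $\varphi:\Ab\to\Ab'$ together with the identification of the $t_i$'s up to the change of variables given by $P$; chasing how the automorphisms $\sigma_i$ act (they are encoded by the $\Lamm$-grading, hence by how $C(\Bb)$ sits inside the full Laurent ring) then yields $\bm{\sigma}'=\varphi\bm{\sigma}^P\varphi^{-1}$.

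\medskip

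\emph{Main obstacle.} The delicate direction is (a)$\Rightarrow$(b): producing the algebra isomorphism $\varphi$ and the precise conjugation relation \eqref{0} from an abstract isograded-isomorphism. The degree-zero component alone does not determine $\varphi$ (it only sees $\Ab^{\bm{\sigma}}$, which can be much smaller than $\Ab$), so one genuinely needs the central-closure / localization argument from \cite{ABFP1} to reconstruct all of $\Ab$ and the grading automorphisms, and then to keep careful track of how the matrix $P$ enters when the two Laurent variable sets are matched — this bookkeeping, together with the root-of-unity normalization \eqref{46}, is where the proof requires the most care. By contrast the (b)$\Rightarrow$(a) direction, including the compatibility $\psi(x\otimes 1)=\varphi(x)\otimes 1$, is an essentially explicit construction.
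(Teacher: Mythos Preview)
Your overall strategy matches the paper's: the equivalence (a)$\Leftrightarrow$(b) is quoted from \cite[Theorem 8.3.2(ii)]{ABFP1} (your central-closure sketch is a reasonable outline of what happens there), and the ``Moreover'' clause is handled by writing down $\psi$ explicitly. So the architecture is right.

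There is, however, a concrete error in your (b)$\Rightarrow$(a) construction. The map $\psi(u\otimes t^{\lambda})=\varphi(u)\otimes t^{{}^tP\lambda}$ is \emph{not} the correct one. If $u\in\Ab^{\bar\lambda_{(\bm\sigma,\bm m)}}$, a direct computation with $\sigma'_j=\varphi\big(\prod_i\sigma_i^{p_{ij}}\big)\varphi^{-1}$ gives
\[
\sigma'_j(\varphi(u))=\Big(\prod_i \zeta_{m_i}^{p_{ij}l_i}\Big)\varphi(u)=\zeta_{m'_j}^{\,l'_j}\,\varphi(u),\qquad l'_j=\sum_i \frac{m'_j\,p_{ij}}{m_i}\,l_i,
\]
so $\varphi(u)\in{\Ab'}^{\overline{\lambda'}_{(\bm\sigma',\bm m')}}$ with $\lambda'=\lambda\,{}^t\!Q$ for $Q=D_{\bm m'}{}^t\!P D_{\bm m}^{-1}$, not $\lambda'={}^tP\lambda$. (This is exactly where admissibility enters: it makes $Q\in\GL$ so that $l'\in\mathbb{Z}^n$.) Your own parenthetical about ``rescaling'' is the missing ingredient, but it never made it into your formula for $\psi$. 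The paper defines $\psi(x\otimes t^\lambda)=\varphi(x)\otimes t^{\lambda\,{}^t\!Q}$ and cites \cite[Proposition 8.2.1]{ABFP1} for the verification; with that correction, $\psi(x\otimes 1)=\varphi(x)\otimes 1$ follows as you say. The same slip reappears in your (a)$\Rightarrow$(b) when you set $P:={}^tQ$; the correct relation between the matrix of $\psi_{\mathbb{Z}^n}$ and $P$ again involves the diagonal factors $D_{\bm m}, D_{\bm m'}$.
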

\textbf{Proof.}
  The first statement is \cite[Theorem 8.3.2 (ii)]{MR2418198}.
  Suppose that $P, \varphi$ satisfy (b), 
  and let $Q=D_{\bm{m}'}{}^t\!PD_{\bm{m}}^{-1} \in \mathrm{GL}_n(\mathbb{Z})$.
  If we define $\psi: \Bb \to \Bb'$ as 
  \[ \Bb^{\lambda} \ni x \otimes t^{\lambda} \mapsto \varphi(x) \otimes t^{\lambda {}^t \! Q} 
     \in \Bb'^{\lambda {}^t \! Q}
  \]
  for $\lambda \in \mathbb{Z}^n, x \in \Ab^{\bar{\lambda}}$, 
  then $\psi$ is an isograded-isomorphism by \cite[Proposition 8.2.1]{MR2418198}. 
  Clearly $\psi(x \otimes 1)=\varphi(x) \otimes 1$ for $x \in \Ab^{\bm{\sigma}}$.
\qed
%
%
%
\subsection{Support-isomorphism}
Let $\Bb$ be a $\Lambda$-graded algebra for an abelian group $\Lambda$, and
take a subgroup $\Lambda_{\mathrm{sub}} \subseteq \Lambda$ such that 
$\lr{\Lambda}{\Bb} \subseteq \Lambda_{\mathrm{sub}}$.
Since $\Bb=\oplus_{\lambda \in \Lambda_{\mathrm{sub}}} \Bb^{\lambda}$,
we can consider $\Bb$ canonically as a $\Lambda_{\mathrm{sub}}$-graded algebra.
In particular, we can view $\Bb$ as $\lr{\Lambda}{\Bb}$-graded.
%
\begin{dfn} \label{supp-isom} \normalfont
  Let $\Lambda, \Lambda'$ be abelian groups, and 
  suppose that $\Bb$ is a $\Lambda$-graded algebra and $\Bb'$ is a $\Lambda'$-graded algebra.
  We say $\Bb$ and $\Bb'$ are \textit{support-isograded-isomorphic} (or \textit{support-isomorphic}, 
  for short) if there exist an algebra isomorphism $\varphi: \Bb \to \Bb'$ and a group isomorphism 
  $\varphi_{\mathrm{su}}: \lr{\Lambda}{\Bb} \to \lr{\Lambda'}{\Bb'}$ such that 
  \[ \varphi(\Bb^{\lambda}) = {\Bb'}^{\varphi_{\mathrm{su}}(\lambda)}
  \]
  for $\lambda \in \lr{\Lambda}{\Bb}$:
  in other words, if $\Bb$ is considered as $\lr{\Lambda}{\Bb}$-graded and $\Bb'$ as 
  $\lr{\Lambda'}{\Bb'}$-graded, then $\Bb$ and $\Bb'$ are isograded-isomorphic.
  In that case, we call $\varphi$ a \textit{support-isograded-isomorphism} 
  (or \textit{support-isomorphism}, for short), and we write $\Bb \cong_{\mathrm{supp}} \Bb'$.
\end{dfn} 
%
The following lemma is obvious from the definitions:
%
\begin{lem} \label{supp and ig}
  Let $\Lambda, \Lambda'$ be abelian groups, 
  and suppose that $\Bb$ is a $\Lambda$-graded algebra and $\Bb'$ is a $\Lambda'$-graded algebra. 
  \begin{enumerate}
    \item[\textup{(a)}] If $\Bb \igisom \Bb'$, then $\Bb \suppisom \Bb'$. 
    \item[\textup{(b)}]
      If $\lr{\Lambda}{\Bb}=\Lambda$ and $\lr{\Lambda'}{\Bb'}= \Lambda'$, 
      then $\Bb \igisom \Bb'$ is equivalent to $\Bb \cong_{\mathrm{supp}} \Bb'$. 
  \end{enumerate} 
\end{lem}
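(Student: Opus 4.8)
The plan is to unwind the definitions and observe that everything follows formally. For part (a), suppose $\Bb \igisom \Bb'$ via an algebra isomorphism $\varphi \colon \Bb \to \Bb'$ and a group isomorphism $\varphi_{\Lambda} \colon \Lambda \to \Lambda'$ with $\varphi(\Bb^{\lambda}) = {\Bb'}^{\varphi_{\Lambda}(\lambda)}$ for all $\lambda \in \Lambda$. First I would check that $\varphi_{\Lambda}$ restricts to a bijection between the supports: if $\lambda \in \mathrm{supp}_{\Lambda}(\Bb)$, i.e.\ $\Bb^{\lambda} \neq \{0\}$, then ${\Bb'}^{\varphi_{\Lambda}(\lambda)} = \varphi(\Bb^{\lambda}) \neq \{0\}$ since $\varphi$ is a bijection, so $\varphi_{\Lambda}(\lambda) \in \mathrm{supp}_{\Lambda'}(\Bb')$; the reverse inclusion follows by applying the same argument to $\varphi^{-1}$ and $\varphi_{\Lambda}^{-1}$. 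A group isomorphism carrying one support onto the other therefore carries the subgroup they generate onto the subgroup generated by the other, so $\varphi_{\Lambda}$ restricts to a group isomorphism $\varphi_{\mathrm{su}} \colon \lr{\Lambda}{\Bb} \to \lr{\Lambda'}{\Bb'}$. Since $\Bb = \bigoplus_{\lambda \in \lr{\Lambda}{\Bb}} \Bb^{\lambda}$ and similarly for $\Bb'$ (all graded pieces outside the generated subgroup, indeed outside the support, are zero), the relation $\varphi(\Bb^{\lambda}) = {\Bb'}^{\varphi_{\mathrm{su}}(\lambda)}$ holds for every $\lambda \in \lr{\Lambda}{\Bb}$, which is exactly the statement that $\Bb \suppisom \Bb'$ in the sense of Definition \ref{supp-isom}.

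For part (b), the hypothesis $\lr{\Lambda}{\Bb} = \Lambda$ and $\lr{\Lambda'}{\Bb'} = \Lambda'$ means that the re-grading described in Definition \ref{supp-isom} does nothing: $\Bb$ as a $\lr{\Lambda}{\Bb}$-graded algebra is literally $\Bb$ as a $\Lambda$-graded algebra, and likewise for $\Bb'$. Thus a support-isomorphism between $\Bb$ and $\Bb'$ is precisely an isograded-isomorphism between the $\Lambda$-graded algebra $\Bb$ and the $\Lambda'$-graded algebra $\Bb'$ (the group isomorphism $\varphi_{\mathrm{su}} \colon \lr{\Lambda}{\Bb} \to \lr{\Lambda'}{\Bb'}$ is just a group isomorphism $\Lambda \to \Lambda'$, and the compatibility condition with $\varphi$ is identical in the two definitions). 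One direction of the equivalence is part (a); for the converse, given a support-isomorphism, reading the definitions under the hypothesis directly produces an isograded-isomorphism. Hence $\Bb \igisom \Bb'$ iff $\Bb \suppisom \Bb'$.

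There is no real obstacle here; the only point requiring a word of justification is the elementary fact that a group isomorphism mapping one generating set onto another induces an isomorphism of the generated subgroups, together with the observation that $\varphi$ being an algebra \emph{isomorphism} (not merely a homomorphism) is what forces the supports to correspond bijectively. I would keep the write-up to a few lines, since the lemma is, as the paper says, obvious from the definitions.
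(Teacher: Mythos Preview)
Your proposal is correct and matches the paper's approach: the paper states the lemma as ``obvious from the definitions'' and gives no proof (note the \verb|\qed| at the end of the statement), and your argument is precisely the routine unwinding of Definitions \ref{dfn of two isom} and \ref{supp-isom} that this phrase signals. There is nothing to add or change.
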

%

We would like to give a necessary and sufficient condition for two multiloop algebras based on 
central-simple algebras to be support-isomorphic. To do this, we need the following lemmas.
%
\begin{lem} \label{Z-span} 
  Let $\Ab$ be an algebra, $\bm{\sigma} \in \Auta$.
  Then there exists $P \in \mathrm{GL}_n(\mathbb{Z})$ such that 
  \begin{equation} \label{59}
    \lr{\mathbb{Z}^n}{M_{\mathrm{ord}(\bm{\sigma}^P)}(\Ab, \bm{\sigma}^P)}=\mathbb{Z}^n.
  \end{equation}
\end{lem}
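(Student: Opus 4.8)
The plan is to use the right $\GL_n(\mathbb{Z})$-action on $\bm{\sigma}$ to replace the generating tuple by one adapted to an internal direct-sum decomposition of the finite group it generates, so that the grading group $\bar{\Lambda}_{\mathrm{ord}(\bm{\sigma}^P)}$ becomes identified with the character group of that finite group; once this reduction is made, the statement reduces to the elementary fact that a faithful module over a finite abelian group has a support generating the whole character group, which then lifts to $\mathbb{Z}^n$ by routine bookkeeping.

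In detail, first I would set $G=\langle\sigma_1,\dots,\sigma_n\rangle\subseteq\mathrm{Aut}(\Ab)$, which is a finite abelian group since the $\sigma_i$ commute and have finite order, and consider the surjection $\phi\colon\mathbb{Z}^n\twoheadrightarrow G$ sending the $i$-th standard basis vector to $\sigma_i$. Because $G$ is finite, $\ker\phi$ has rank $n$, so by Smith normal form there are a $\mathbb{Z}$-basis $f_1,\dots,f_n$ of $\mathbb{Z}^n$ and positive integers $e_1,\dots,e_n$ such that $e_1f_1,\dots,e_nf_n$ is a $\mathbb{Z}$-basis of $\ker\phi$. Let $P\in\GL$ be the matrix with columns $f_1,\dots,f_n$. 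Unwinding the definition of $\bm{\sigma}^P$ shows that its $j$-th component equals $\phi(f_j)$, that $\phi(f_j)$ has order exactly $e_j$, and that $G=\langle\phi(f_1)\rangle\oplus\cdots\oplus\langle\phi(f_n)\rangle$. Hence, setting $\bm{m}:=\mathrm{ord}(\bm{\sigma}^P)=(e_1,\dots,e_n)$, we get $\Lamm=\mathbb{Z}/e_1\mathbb{Z}\times\cdots\times\mathbb{Z}/e_n\mathbb{Z}$, and the rule sending $\bar{\lambda}=(\bar{l}_1,\dots,\bar{l}_n)$ to the character $\chi$ of $G$ with $\chi(\phi(f_i))=\zeta_{e_i}^{l_i}$ defines a group isomorphism $\Theta\colon\Lamm\xrightarrow{\sim}\widehat{G}$, under which $\Ab^{\bar{\lambda}}$ (with $\Ab$ graded by $\bm{\sigma}^P$ and $\bm{m}$) is precisely the $\Theta(\bar{\lambda})$-isotypic component $\Ab_{\Theta(\bar{\lambda})}$ of $\Ab$ for the $G$-action.

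Next I would show $\lr{\Lamm}{\Ab}=\Lamm$, equivalently (via $\Theta$) that $\Sigma:=\{\chi\in\widehat{G}\mid\Ab_\chi\neq 0\}$ generates $\widehat{G}$. Since the $\phi(f_i)$ are commuting finite-order automorphisms and $k$ is algebraically closed of characteristic $0$, they are simultaneously diagonalizable, so $\Ab=\bigoplus_{\chi\in\widehat{G}}\Ab_\chi$. If $\langle\Sigma\rangle\subsetneq\widehat{G}$, then by Pontryagin duality for finite abelian groups the subgroup $\bigcap_{\chi\in\Sigma}\ker\chi$ of $G$ would be nontrivial; but any element of this subgroup acts by the scalar $1$ on every nonzero $\Ab_\chi$, hence as the identity on $\Ab$, contradicting the fact that $G$ acts faithfully on the (nonzero) algebra $\Ab$. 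Thus $\langle\Sigma\rangle=\widehat{G}$, i.e. $\lr{\Lamm}{\Ab}=\Lamm$.

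Finally I would descend to $\mathbb{Z}^n$. The $\lambda$-graded component of $M_{\bm{m}}(\Ab,\bm{\sigma}^P)$ is $\Ab^{\bar{\lambda}}\otimes t^{\lambda}$, so its $\mathbb{Z}^n$-support equals $\pi^{-1}(\mathrm{supp}_{\Lamm}(\Ab))$, where $\pi\colon\mathbb{Z}^n\to\Lamm$ is the canonical projection. For any nonempty $S\subseteq\Lamm$ one has $\langle\pi^{-1}(S)\rangle=\pi^{-1}(\langle S\rangle)$: the inclusion $\subseteq$ is clear, while for $\supseteq$ one first checks $\ker\pi\subseteq\langle\pi^{-1}(S)\rangle$ (the difference of two lifts of a fixed element of $S$ ranges over $\ker\pi$) and then lifts an arbitrary element of $\langle S\rangle$ termwise. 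Applying this with $S=\mathrm{supp}_{\Lamm}(\Ab)$ (which is nonempty as $\Ab\neq 0$) and $\langle S\rangle=\Lamm$ yields $\lr{\mathbb{Z}^n}{M_{\bm{m}}(\Ab,\bm{\sigma}^P)}=\pi^{-1}(\Lamm)=\mathbb{Z}^n$, which is \eqref{59}. I expect the only non-routine point to be the choice of $P$ in the second paragraph: recognizing that Smith normal form lets one replace $\bm{\sigma}$ by a tuple whose orders realize an internal direct-sum decomposition of $G$, so that $\bar{\Lambda}_{\mathrm{ord}(\bm{\sigma}^P)}$ is exactly $\widehat{G}$; the diagonalization, the duality argument, and the descent to $\mathbb{Z}^n$ are then standard.
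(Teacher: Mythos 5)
Your proof is correct, but it takes a different route from the paper: the paper disposes of this lemma in two lines by citation, invoking \cite[Proposition 5.1.3]{ABFP2} to produce $P \in \mathrm{GL}_n(\mathbb{Z})$ with $|G| = \prod_{i=1}^n \mathrm{ord}(\bm{\sigma}^P_i)$ for $G = \langle \sigma_1, \dots, \sigma_n \rangle$, and then \cite[Lemma 3.2.4]{ABFP1}, which states that this order condition is \emph{equivalent} to (\ref{59}). You instead prove everything from scratch: your Smith normal form construction of $P$ (columns a basis $f_1,\dots,f_n$ of $\mathbb{Z}^n$ adapted to $\ker\phi$) is in effect a direct proof of the cited Proposition 5.1.3, since it yields $\mathrm{ord}(\bm{\sigma}^P_j) = e_j$ with $\prod_j e_j = |G|$ and, more precisely, an internal direct sum decomposition $G = \bigoplus_j \langle \phi(f_j) \rangle$; and your second and third paragraphs (identification of $\bar{\Lambda}_{\mathrm{ord}(\bm{\sigma}^P)}$ with $\widehat{G}$, the duality/faithfulness argument that the support generates $\widehat{G}$, and the lift $\langle \pi^{-1}(S) \rangle = \pi^{-1}(\langle S \rangle)$) supply the one direction of the cited Lemma 3.2.4 that the proof needs. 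All the individual steps check out: $P \in \mathrm{GL}_n(\mathbb{Z})$ because the $f_j$ form a basis, the $j$-th component of $\bm{\sigma}^P$ is indeed $\phi(f_j)$ of exact order $e_j$, simultaneous diagonalizability of finitely many commuting finite-order automorphisms holds in any dimension over $k$ (minimal polynomials divide $x^N - 1$, which is separable), and the annihilator argument correctly uses that $G \subseteq \mathrm{Aut}(\mathcal{A})$ acts faithfully. What the paper's proof buys is brevity and delegation to the literature; what yours buys is a self-contained argument that also makes the mechanism transparent (the grading group for $\bm{m} = \mathrm{ord}(\bm{\sigma}^P)$ becomes exactly the character group of $G$). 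The only caveat, which you yourself flag, is the tacit assumption $\mathcal{A} \neq \{0\}$; this is genuinely needed (the statement fails for the zero algebra) and is equally implicit in the paper's formulation, so it is a shared convention rather than a gap in your argument.
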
 
{\textbf{Proof.}}
  Let $G= \langle \{ \sigma_1, \dots, \sigma_n \} \rangle$.
  By \cite[Proposition 5.1.3]{MR2506428}, there exists $P \in \mathrm{GL}_n(\mathbb{Z})$ such that
  \begin{equation} \label{60}
    |G|=\prod_{i=1}^n \mathrm{ord} \left( (\bm{\sigma}^P)_i \right),
  \end{equation}
  where $|G|$ denotes the cardinal number of $G$ and $\bm{\sigma}^P=((\bm{\sigma}^P)_1,\dots, (\bm{\sigma}^P)_n)$.
  By \cite[Lemma 3.2.4]{MR2418198}, (\ref{60}) is equivalent to (\ref{59}).
\qed
%

%
%
\begin{lem} \label{lem10}
  Let $\Ab$ be an algebra and $\Bb=\Mm$ be a multiloop algebra of nullity $n$.
  Then it follows that 
  \[ \Bb \suppisom M_{\mathrm{ord}(\bm{\sigma})}(\Ab, \bm{\sigma}).
  \]
\end{lem}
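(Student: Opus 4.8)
The plan is to exhibit an explicit support-isomorphism obtained by rescaling the $\mathbb{Z}^n$-exponents. Write $r_i=\mathrm{ord}(\sigma_i)$; since $\idcond{\sigma}{m}$ forces $r_i\mid m_i$, set $m_i=q_ir_i$ with $q_i\in\mathbb{Z}_{>0}$, put $\bm r=\mathrm{ord}(\bm\sigma)=(r_1,\dots,r_n)$ and $\Bb'=M_{\bm r}(\Ab,\bm\sigma)$. The first step is to note that the $\bm m$-grading of $\Ab$ is concentrated in components indexed by multiples of the $q_i$: if $\lambda=(l_1,\dots,l_n)\in\mathbb{Z}^n$ and $0\neq u\in\Ab^{\bar\lambda_{(\bm\sigma,\bm m)}}$, then applying $\sigma_i^{r_i}=\mathrm{id}$ to $\sigma_i(u)=\zeta_{m_i}^{l_i}u$ gives $\zeta_{m_i}^{l_ir_i}=1$, hence $m_i\mid l_ir_i$ and so $q_i\mid l_i$. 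In particular $\mathrm{supp}_{\mathbb{Z}^n}(\Bb)\subseteq H:=q_1\mathbb{Z}\times\dots\times q_n\mathbb{Z}$.

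Next I would introduce the group isomorphism $\phi\colon H\to\mathbb{Z}^n$, $(l_1,\dots,l_n)\mapsto(l_1/q_1,\dots,l_n/q_n)$, and prove the key identity: for $\lambda\in H$, writing $\mu=\phi(\lambda)$ (so $l_i=q_i\mu_i$), one has
\[
  \Ab^{\bar\lambda_{(\bm\sigma,\bm m)}}=\Ab^{\bar\mu_{(\bm\sigma,\bm r)}}.
\]
Indeed, $\zeta_{m_i}^{l_i}=\bigl(\zeta_{m_i}^{q_i}\bigr)^{\mu_i}=\zeta_{r_i}^{\mu_i}$, where $\zeta_{m_i}^{q_i}=\zeta_{q_ir_i}^{q_i}=\zeta_{r_i}$ by the normalization (\ref{46}); thus the systems of eigenvalue conditions defining the two subspaces coincide. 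Since $\phi$ maps $H$ onto $\mathbb{Z}^n$, this identity shows $\lambda\in\mathrm{supp}_{\mathbb{Z}^n}(\Bb)\iff\phi(\lambda)\in\mathrm{supp}_{\mathbb{Z}^n}(\Bb')$, hence $\phi\bigl(\mathrm{supp}_{\mathbb{Z}^n}(\Bb)\bigr)=\mathrm{supp}_{\mathbb{Z}^n}(\Bb')$; consequently $\phi$ restricts to a group isomorphism $\varphi_{\mathrm{su}}\colon\lr{\mathbb{Z}^n}{\Bb}\to\lr{\mathbb{Z}^n}{\Bb'}$.

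Finally I would define $\psi\colon\Bb\to\Bb'$ on homogeneous elements by $\psi(u\otimes t^\lambda)=u\otimes t^{\phi(\lambda)}$ for $\lambda\in\mathrm{supp}_{\mathbb{Z}^n}(\Bb)$, $u\in\Ab^{\bar\lambda}$, extended linearly. By the key identity $u\otimes t^{\phi(\lambda)}$ lies in $\Bb'^{\phi(\lambda)}$, so $\psi$ really maps into $\Bb'$, and by the support computation it is a linear bijection onto $\Bb'$ with $\psi(\Bb^\lambda)=\Bb'^{\varphi_{\mathrm{su}}(\lambda)}$. Since both $\Bb$ and $\Bb'$ inherit their multiplication $(u\otimes t^\lambda)(v\otimes t^\nu)=uv\otimes t^{\lambda+\nu}$ from $\Ab\otimes k[t_1^{\pm1},\dots,t_n^{\pm1}]$ and $\phi$ is a homomorphism, $\psi$ is an algebra isomorphism; hence $\Bb\suppisom\Bb'$, as desired. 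I do not expect a real obstacle: the argument is essentially bookkeeping, and the only delicate point is that the compatibility (\ref{46}) of the roots of unity is exactly what makes the identification $\Ab^{\bar\lambda_{(\bm\sigma,\bm m)}}=\Ab^{\bar\mu_{(\bm\sigma,\bm r)}}$ valid. It is worth emphasising that one cannot simply appeal to Proposition \ref{realization theorem} here, because the identity matrix is $(\bm r,\bm m)$-admissible only when $\bm m=\bm r$; this is precisely the reason the conclusion is phrased as a support-isomorphism rather than an isograded-isomorphism.
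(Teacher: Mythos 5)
Your proof is correct and is essentially the paper's own argument: the same divisibility observation ($q_i\mid l_i$ on the support), the same identity $\Ab^{\bar\lambda_{(\bm\sigma,\bm m)}}=\Ab^{\bar\mu_{(\bm\sigma,\mathrm{ord}(\bm\sigma))}}$ via the normalization (\ref{46}), and the same explicit exponent-rescaling isomorphism, only written in the direction $\Bb\to M_{\mathrm{ord}(\bm\sigma)}(\Ab,\bm\sigma)$ rather than its inverse as in the paper.
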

{\textbf{Proof.}} 
  Let $a_i \in \mathbb{Z}_{>0}$ be a positive integer 
  such that $\mathrm{ord}(\sigma_i)=m_i/a_i $ for $1 \le i \le n$.
  We write $\Bb_{\mathrm{ord}(\bm{\sigma})} = M_{\mathrm{ord}(\bm{\sigma})}(\Ab, \bm{\sigma})$.
  Let $f_{\bm{a}}:\mathbb{Z}^n \to \mathbb{Z}^n$ be an injective homomorphism defined as
  \[ f_{\bm{a}} \big( (l_1,\dots ,l_n) \big) =(a_1 l_1,\dots ,a_n l_n)
  \]
  for $(l_1,\dots ,l_n) \in \mathbb{Z}^n$.
  By (\ref{46}), we have $\zeta_{m_i}^{a_i}=\zeta_{\mathrm{ord}(\sigma_i)}$.
  Using this we have
  \begin{equation} \label{46'}
    \begin{split}
       \Ab^{\bar{\lambda}_{(\bm{\sigma}, \mathrm{ord}(\bm{\sigma}))}}&= \{ u \in \Ab \mid 
       \sigma_i(u)=\zeta_{\mathrm{ord}(\sigma_i)}^{{l_i}}u \ \ \mathrm{for} \ 1 \le i \le n \} \\
       &=\{u \in \Ab \mid\sigma_i (u) = \zeta_{m_i}^{a_i l_i}u \quad \mathrm{for} \ 1 \le i \le n \} \\
       &=\Ab^{\overline{f_{\bm{a}}(\lambda)}_{(\bm{\sigma},\bm{m})}}
     \end{split} 
  \end{equation}
  for $\lambda=(l_1, \dots, l_n) \in \mathbb{Z}^n$.
  Next, suppose that $\lambda=(l_1, \dots, l_n) \notin \mathrm{Im} \, f_{\bm{a}}$.
  Then there exists $j$ such that $a_j\nmid l_j$, 
  and from this we have $\Ab^{\bar{\lambda}_{(\bm{\sigma}, \bm{m})}} = \{ 0 \}$.
  Consequently, we can define an algebra isomorphism $\varphi: \Bb_{\mathrm{ord}(\bm{\sigma})} \to \Bb$ as 
  \[  \Bb_{\mathrm{ord}(\bm{\sigma})}^{\lambda}=
      \Ab^{{\bar{\lambda}}_{(\bm{\sigma}, \mathrm{ord}(\bm{\sigma}))}} \otimes t^{\lambda}
      \ni u \otimes t^{\lambda} \mapsto u \otimes t^{f_{\bm{a}} (\lambda)}
      \in \Ab^{\overline{f_{\bm{a}}(\lambda)}_{(\bm{\sigma}, \bm{m})}} \otimes t^{f_{\bm{a}}(\lambda)}
      =\Bb^{f_{\bm{a}}(\lambda)}.
  \]  
  Since $f_{\bm{a}} \left( \lr{\mathbb{Z}^n}{\Bb_{\mathrm{ord}(\bm{\sigma})}} \right)=\lr{\mathbb{Z}^n}{\Bb}$,
  the above isomorphism is indeed a support-isomorphism.
\qed
\begin{prop} \label{prop4}
  Suppose that $\Ab, \Ab'$ are central-simple algebras, and 
  let $\Bb=\Mm$ and $\Bb'=\Mmb$ be multiloop algebras of nullity $n$.
  Then $\Bb \suppisom \Bb'$ if and only if there exist $P \in \mathrm{GL}_n(\mathbb{Z})$
  and an algebra isomorphism $\varphi: \Ab \to \Ab'$ such that
  \begin{equation} \label{2'}
    \bm{\sigma}'=\varphi \bm{\sigma}^P \varphi^{-1}.
  \end{equation}
  (In particular, it does not depend on $\bm{m}$ or $\bm{m}'$ whether or not $\Bb \suppisom \Bb'$).  
  Moreover, if $P \in \mathrm{GL}_n(\mathbb{Z})$ and an isomorphism $\varphi: \Ab \to \Ab'$
  satisfy \textup{(\ref{2'})}, then we can take a support-isomorphism $\psi:\Bb \to \Bb'$ satisfying
  $\psi(x \otimes 1) =\varphi(x) \otimes 1$ for $x \in \Ab^{\bm{\sigma}}$.
\end{prop}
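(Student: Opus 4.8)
The plan is to deduce the proposition from Proposition \ref{realization theorem} by exploiting that the relation $\suppisom$ (which is readily seen to be an equivalence relation) is insensitive to the choice of $\bm{m}$, via Lemma \ref{lem10}, and that after rescaling $\bm{m}$ to a scalar $n$-tuple \emph{every} $P\in\GL$ becomes admissible in the sense of Definition \ref{def4}. I would first record two elementary facts. If $P\in\GL$, then $\bm{\sigma}^P_1,\dots,\bm{\sigma}^P_n$ generate the same subgroup of $\mathrm{Aut}(\Ab)$ as $\sigma_1,\dots,\sigma_n$; hence $\Ab^{\bm{\sigma}^P}=\Ab^{\bm{\sigma}}$, and the degree-$0$ components of $\Bb$, of $M_{\bm{m}'}(\Ab,\bm{\sigma}^P)$, and of the other multiloop algebras below all equal $\Ab^{\bm{\sigma}}\otimes 1$. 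Moreover, the support-isomorphism constructed in the proof of Lemma \ref{lem10} restricts to the identity on the degree-$0$ component, since the monomorphism $\mathbb{Z}^n\to\mathbb{Z}^n$ used there fixes the origin.

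The central step will be the following claim: for every $P\in\GL$ and every $\bm{m}'\in\mathbb{Z}_{>0}^n$ with $(\bm{\sigma}^P)^{\bm{m}'}=\bm{\mathrm{id}}$, there is a support-isomorphism $\Bb=\Mm\suppisom M_{\bm{m}'}(\Ab,\bm{\sigma}^P)$ that is the identity on $\Ab^{\bm{\sigma}}\otimes 1$. Applying Lemma \ref{lem10} to both multiloop algebras reduces this to the case $\bm{m}=\mathrm{ord}(\bm{\sigma})$ and $\bm{m}'=\mathrm{ord}(\bm{\sigma}^P)$. There I would pick a common multiple $N$ of $\mathrm{ord}(\sigma_1),\dots,\mathrm{ord}(\sigma_n)$ and set $\bm{N}=(N,\dots,N)$; then $\sigma_i^N=\mathrm{id}$ for all $i$ forces $\bm{\sigma}^{\bm{N}}=(\bm{\sigma}^P)^{\bm{N}}=\bm{\mathrm{id}}$, and since $D_{\bm{N}}=N\cdot\mathrm{id}$ one has $D_{\bm{N}}\,{}^t\!P\,D_{\bm{N}}^{-1}={}^t\!P\in\GL$, i.e.\ $P$ is $(\bm{N},\bm{N})$-admissible. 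Proposition \ref{realization theorem}, applied with $\Ab'=\Ab$, $\varphi=\mathrm{id}$ and $\bm{\sigma}'=\bm{\sigma}^P$, then yields $M_{\bm{N}}(\Ab,\bm{\sigma})\igisom M_{\bm{N}}(\Ab,\bm{\sigma}^P)$ by a graded isomorphism that is the identity on $\Ab^{\bm{\sigma}}\otimes 1$; composing it with the two support-isomorphisms provided by Lemma \ref{lem10} establishes the claim.

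For the implication (b) $\Rightarrow$ ($\Bb\suppisom\Bb'$) together with the final assertion, suppose $P\in\GL$ and $\varphi:\Ab\to\Ab'$ satisfy $\bm{\sigma}'=\varphi\bm{\sigma}^P\varphi^{-1}$. Conjugating $(\bm{\sigma}')^{\bm{m}'}=\bm{\mathrm{id}}$ by $\varphi^{-1}$ gives $(\bm{\sigma}^P)^{\bm{m}'}=\bm{\mathrm{id}}$, so the claim yields $\Bb\suppisom M_{\bm{m}'}(\Ab,\bm{\sigma}^P)$ via a support-isomorphism that is the identity on $\Ab^{\bm{\sigma}}\otimes 1$. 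On the other hand $x\otimes t^{\lambda}\mapsto\varphi(x)\otimes t^{\lambda}$ is a $\mathbb{Z}^n$-graded isomorphism $M_{\bm{m}'}(\Ab,\bm{\sigma}^P)\to M_{\bm{m}'}(\Ab',\varphi\bm{\sigma}^P\varphi^{-1})=\Bb'$, because $\varphi$ carries $\Ab^{\bar\lambda}_{(\bm{\sigma}^P,\bm{m}')}$ onto $(\Ab')^{\bar\lambda}_{(\bm{\sigma}',\bm{m}')}$; in particular it is a support-isomorphism (Lemma \ref{supp and ig}(a)) sending $x\otimes 1$ to $\varphi(x)\otimes 1$. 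The composite $\psi:\Bb\to\Bb'$ is then a support-isomorphism with $\psi(x\otimes 1)=\varphi(x)\otimes 1$ for all $x\in\Ab^{\bm{\sigma}}$, which proves this implication and the ``Moreover'' part at once.

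For the converse, assume $\Bb\suppisom\Bb'$. By Lemma \ref{Z-span} I would choose $P_1,P_2\in\GL$ such that $\lr{\mathbb{Z}^n}{\tilde{\Bb}}=\mathbb{Z}^n=\lr{\mathbb{Z}^n}{\tilde{\Bb}'}$, where $\tilde{\Bb}:=M_{\mathrm{ord}(\bm{\sigma}^{P_1})}(\Ab,\bm{\sigma}^{P_1})$ and $\tilde{\Bb}':=M_{\mathrm{ord}((\bm{\sigma}')^{P_2})}(\Ab',(\bm{\sigma}')^{P_2})$. The claim gives $\Bb\suppisom\tilde{\Bb}$ and $\Bb'\suppisom\tilde{\Bb}'$, hence $\tilde{\Bb}\suppisom\tilde{\Bb}'$, and since the subgroups generated by the supports are both $\mathbb{Z}^n$ this upgrades to $\tilde{\Bb}\igisom\tilde{\Bb}'$ by Lemma \ref{supp and ig}(b). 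Proposition \ref{realization theorem} then furnishes $Q\in\GL$ and an algebra isomorphism $\varphi:\Ab\to\Ab'$ with $(\bm{\sigma}')^{P_2}=\varphi(\bm{\sigma}^{P_1})^Q\varphi^{-1}=\varphi\bm{\sigma}^{P_1Q}\varphi^{-1}$; applying the $P_2^{-1}$-twist and the identity $(\varphi\bm{\tau}\varphi^{-1})^R=\varphi\bm{\tau}^R\varphi^{-1}$ (a one-line check) gives $\bm{\sigma}'=\varphi\bm{\sigma}^{P_1QP_2^{-1}}\varphi^{-1}$, so $P:=P_1QP_2^{-1}\in\GL$ is as required. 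The only genuine obstacle here is the central claim: once one notices that passing to the scalar tuple $\bm{N}$ trivializes the admissibility condition in Proposition \ref{realization theorem}, everything else is bookkeeping, the main point of care being to keep every map occurring in the compositions equal to the identity, respectively to $\varphi$, on the degree-$0$ component $\Ab^{\bm{\sigma}}\otimes 1$.
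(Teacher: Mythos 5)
Your proposal is correct and follows essentially the same route as the paper: rescale to a scalar tuple (the paper uses the l.c.m.\ of all $m_i,m_i'$, you a common multiple of the orders) so that admissibility in Proposition \ref{realization theorem} becomes trivial, reduce via Lemma \ref{lem10}, and for the converse use Lemma \ref{Z-span} plus Lemma \ref{supp and ig}(b) to upgrade to an isograded-isomorphism and then untwist by $P_2^{-1}$. The only difference is cosmetic bookkeeping: you factor the ``if'' direction through $M_{\bm{m}'}(\Ab,\bm{\sigma}^P)$ and track the degree-$0$ behaviour explicitly, which the paper leaves as ``easily checked'' for the ``Moreover'' clause.
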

{\textbf{Proof.}}
  First, we show the ``if'' part.
  Let $M=\mathrm{l.c.m} \{ \bm{m}, \bm{m}' \} \in \mathbb{Z}_{>0}$ be the least common multiple 
  of $2n$ positive integers $m_1,\dots,m_n,m_1',\dots,m_n'$, 
  and let $\bm{M}=(M,M,\dots,M) \in \mathbb{Z}^n_{>0}$.
  Obviously, $\bm{\sigma}^{\bm{M}}=\bm{\sigma'}^{\bm{M}}=\bm{\mathrm{id}}$.
  By Lemma \ref{lem10},
  \[ \Bb \suppisom M_{\mathrm{ord}(\bm{\sigma})}(\Ab, \bm{\sigma}) \suppisom M_{\bm{M}}(\Ab, \bm{\sigma}) 
  \]
  and
  \[ \Bb' \suppisom M_{\mathrm{ord}(\bm{\sigma}')}(\Ab', \bm{\sigma}')
     \suppisom M_{\bm{M}}(\Ab', \bm{\sigma}').
  \]
  It is clear from Definition \ref{def4} that $P$ is $(\bm{M}, \bm{M})$-admissible,
  and hence it follows from Proposition \ref{realization theorem} that 
  \[ M_{\bm{M}}(\Ab, \bm{\sigma}) \cong_{\mathrm{ig}} M_{\bm{M}}(\Ab', \bm{\sigma}'),
  \]
  in particular $M_{\bm{M}}(\Ab, \bm{\sigma}) \suppisom M_{\bm{M}}(\Ab', \bm{\sigma}')$ by Lemma \ref{supp and ig}.
  Thus, we have $\Bb \suppisom \Bb'$, and the ``if'' part follows.
  The second statement of the proposition is easily checked from the above proof of ``if'' part,
  using Proposition \ref{realization theorem} and the proof of Lemma \ref{lem10}.
  Next, we show the ``only if'' part.
  By Lemma \ref{Z-span}, there exist $Q,R \in \mathrm{GL}_n(\mathbb{Z})$ such that
  \[ \Big\langle \mathrm{supp}_{\mathbb{Z}^n} \left( M_{\mathrm{ord}(\bm{\sigma}^Q)}(\Ab, \bm{\sigma}^Q) \right) \Big\rangle
     =\mathbb{Z}^n, 
  \]
  and
  \[ \Big\langle \mathrm{supp}_{\mathbb{Z}^n} \left( M_{\mathrm{ord}({\bm{\sigma}'}^R)}(\Ab', \bm{{\sigma}'}^R) \right) 
     \Big\rangle =\mathbb{Z}^n.
  \]
  We abbreviate 
  \[ \Bb_Q= M_{\mathrm{ord}(\bm{\sigma}^Q)}(\Ab, \bm{\sigma}^Q) \ \mathrm{and} \
     \Bb'_R= M_{\mathrm{ord}({\bm{\sigma}'}^R)}(\Ab', {\bm{\sigma}'}^R).
  \]
  From the ``if'' part and the assumption, we have $\Bb_Q \suppisom \Bb \suppisom \Bb' \suppisom \Bb'_R$,
  and this gives $\Bb_Q \igisom \Bb'_R$ by Lemma \ref{supp and ig} (b).
  From Proposition \ref{realization theorem}, there exist $S \in \mathrm{GL}_n(\mathbb{Z})$
  and an algebra isomorphism $\varphi: \Ab \to \Ab'$ 
  such that ${\bm{\sigma}'}^R= \varphi \bm{\sigma}^{QS} \varphi^{-1}$.
  Then we have $\bm{\sigma}'= \varphi \bm{\sigma}^{QSR^{-1}} \varphi^{-1}$.
\qed
\section{Multiloop Lie algebras}
\subsection{Preliminary lemmas} \label{susecition1}
Suppose that $\mathfrak{g}$ is a finite dimensional simple Lie algebra.
Note that $\gn$ is central-simple since $k$ is algebraically closed.
For $n\in \mathbb{Z}_{>0}$, let
$\bm{\sigma}=(\sigma_1,\dots,\sigma_n)\in\mathrm{Aut}_{\mathrm{cfo}}^{n}(\mathfrak{g})$ 
and $\bm{m}=(m_1,\dots,m_n) \in \mathbb{Z}^n_{>0}$ satisfying $\idcond{\sigma}{m}$. 
As (\ref{1}), we define a $\bar{\Lambda}_{\bm{m}}$-grading on $\mathfrak{g}$ as
\begin{equation}\label{1'}
  \gn^{\bar{\lambda}}(=\mathfrak{g}^{\bar{\lambda}_{(\bm{\sigma},\bm{m})}})
  :=\{g\in\mathfrak{g} \mid \sigma_{i}(g)=\zeta_{m_i}^{l_i}g \quad \mathrm{for} \ 1\le i \le n \}
\end{equation}
for $\bar{\lambda}=(\bar{l_1},\dots,\bar{l_n})\in\bar{\Lambda}_{\bm{m}}$.
We denote the Killing form on $\mathfrak{g}$ by $( \ | \ )$.
Recall that the Killing form is non-degenerate on $\gn$, invariant, symmetric, and preserved by any automorphisms.
Then since $( \ | \ )$ is preserved by $\sigma_i$'s, we have that 
\begin{equation} \label{4}
  (\mathfrak{g}^{\bar{\lambda}}|\mathfrak{g}^{\bar{\mu}}) = 0 \ \ \text{if} \ \bar{\lambda} + \bar{\mu} \neq \bar{0},
\end{equation}
where $\bar{\lambda}, \bar{\mu} \in \bar{\Lambda}_{\bm{m}}$. 
Also we have that $( \ | \ )$ on $\mathfrak{g}^{\bar{\lambda}} \times \mathfrak{g}^{-\bar{\lambda}}$ is non-degenerate
since $( \ | \ )$ on $\mathfrak{g}$ is non-degenerate.

The following lemma is well-known. (For example, see \cite[Proposition 4.1.]{MR0068531}).

\begin{lem}\label{lem1}
  $\gsigma(=\mathfrak{g}^{\bar{0}})$ is a reductive Lie algebra.
\end{lem}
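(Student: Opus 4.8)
The plan is to derive the reductivity of $\gz=\gsigma$ from the classical criterion that a finite-dimensional Lie algebra admitting a faithful finite-dimensional representation with non-degenerate trace form is reductive, applied to the adjoint action of $\gz$ on $\gn$. The first step is to check that the Killing form $( \ | \ )$ of $\gn$ restricts to a non-degenerate form on $\gz$. Since each $\sigma_i$ is an automorphism of $\gn$ it preserves $( \ | \ )$, so for $\bar\lambda=(\bar l_1,\dots,\bar l_n)$ and $\bar\mu=(\bar k_1,\dots,\bar k_n)$ in $\Lamm$ and $x\in\gn^{\bar\lambda}$, $y\in\gn^{\bar\mu}$ we get $(x|y)=(\sigma_i x\mid\sigma_i y)=\zeta_{m_i}^{l_i+k_i}(x|y)$ for every $i$; hence $(x|y)=0$ unless $\bar\lambda+\bar\mu=\bar 0$. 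In particular $\gz$ is orthogonal to $\bigoplus_{\bar\lambda\neq\bar 0}\gn^{\bar\lambda}$, and since $( \ | \ )$ is non-degenerate on $\gn$ a dimension count gives $\gz^{\perp}=\bigoplus_{\bar\lambda\neq\bar 0}\gn^{\bar\lambda}$, whence $\gz\cap\gz^{\perp}=\{0\}$ and $( \ | \ )|_{\gz}$ is non-degenerate.

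Next, since $\gn$ is simple it is centreless, so the adjoint representation $\mathrm{ad}\colon\gz\to\mathfrak{gl}(\gn)$ is faithful, and its trace form $(x,y)\mapsto\mathrm{tr}_{\gn}\!\big(\mathrm{ad}(x)\,\mathrm{ad}(y)\big)$ is by definition the restriction of the Killing form of $\gn$ to $\gz$, hence non-degenerate by the first step. It then remains to prove the criterion: if a finite-dimensional Lie algebra $\mathfrak{a}$ over $k$ acts faithfully on a finite-dimensional space $V$ with non-degenerate trace form, then $\mathfrak{a}$ is reductive. For this, let $\mathfrak{r}=\mathrm{rad}(\mathfrak{a})$; by Lie's theorem $[\mathfrak{a},\mathfrak{r}]$ acts as $0$ on every irreducible $\mathfrak{a}$-subquotient of $V$, so choosing a basis of $V$ adapted to a composition series makes every element of $[\mathfrak{a},\mathfrak{r}]$ strictly block-upper-triangular while every element of $\mathfrak{a}$ is block-upper-triangular; therefore $\mathrm{tr}_V(xy)=0$ for all $x\in[\mathfrak{a},\mathfrak{r}]$ and $y\in\mathfrak{a}$, i.e. $[\mathfrak{a},\mathfrak{r}]$ lies in the radical of the trace form and hence is $\{0\}$. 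Thus $\mathfrak{r}=\mathfrak{z}(\mathfrak{a})$, and a Levi decomposition $\mathfrak{a}=\mathfrak{s}\ltimes\mathfrak{r}$ is in fact a direct sum $\mathfrak{a}=\mathfrak{s}\oplus\mathfrak{z}(\mathfrak{a})$ with $\mathfrak{s}$ semisimple, so $\mathfrak{a}$ is reductive; applying this to $\mathfrak{a}=\gz$, $V=\gn$ finishes the argument.

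The only genuinely non-formal ingredient is this last criterion, which is the standard fact that a non-degenerate trace form forces reductivity; if preferred one may simply cite it (or the equivalent statement that the fixed-point subalgebra of a finite group of automorphisms of a semisimple Lie algebra is reductive) rather than reproducing the argument. The remaining steps — the orthogonality of the pieces $\gn^{\bar\lambda}$ under the Killing form and the identification of $( \ | \ )|_{\gz}$ with the trace form of the adjoint action of $\gz$ on $\gn$ — are routine, so I expect no real obstacle beyond invoking this criterion correctly.
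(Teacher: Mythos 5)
Your proof is correct and follows essentially the same route as the paper: establish orthogonality of the $\Lamm$-graded pieces under the Killing form, deduce that $( \ | \ )$ is non-degenerate on $\gsigma$, and then invoke the criterion that a Lie algebra with a finite-dimensional representation (here the adjoint action on $\gn$) whose associated trace form is non-degenerate is reductive. The only difference is that the paper simply cites this criterion as condition (d) of \cite[Chap.\ 1, {\S}6, Proposition 5]{B1}, whereas you reprove it via Lie's theorem and a Levi decomposition, which is fine but not necessary.
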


\begin{rem}\label{rem1} \normalfont
  Note that it is possible that 
  $\gsigma = \{0\}$.
\end{rem} 

Assume that $\gsigma \neq \{ 0 \} $.
Since $\gsigma$ is reductive, 
we can take (and fix) a Cartan subalgebra (i.e.\ a maximal ad-diagonalizable subalgebra) $\hz$ of $\gsigma$.
Note that $\mathfrak{h}$ is not necessarily a Cartan subalgebra of $\mathfrak{g}$.
\begin{lem}\label{lem2} 
  \begin{enumerate}
    \item[\textup{(a)}] $( \ \mid \ )$ is non-degenerate on $\hz$. 
    \item[\textup{(b)}] $\mathfrak{h}$ is ad-diagonalizable on $\mathfrak{g}$. 
  \end{enumerate}
\end{lem}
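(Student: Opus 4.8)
The plan is to exploit the non-degeneracy of the Killing form on $\gsigma$ together with the fact that $\hz$ is a Cartan subalgebra of the reductive Lie algebra $\gsigma$. For part (a), I would argue as follows. Since $\gsigma$ is reductive, it decomposes as $\gsigma = \mathfrak{z} \oplus [\gsigma,\gsigma]$ with $\mathfrak{z}$ the centre and $[\gsigma,\gsigma]$ semisimple, and $\hz$ correspondingly splits as $\hz = \mathfrak{z} \oplus \hz_{\mathrm{ss}}$ where $\hz_{\mathrm{ss}}$ is a Cartan subalgebra of the semisimple part. From the proof of Lemma \ref{lem1} we already know that $(\ \mid\ )$ is non-degenerate on $\gsigma$ (this is the displayed consequence of \eqref{4}). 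On the semisimple part the restriction of the Killing form of $\gn$ to $[\gsigma,\gsigma]$ is a non-degenerate invariant form, hence its further restriction to a Cartan subalgebra $\hz_{\mathrm{ss}}$ is non-degenerate by the standard root-space decomposition argument (root spaces pair only with opposite root spaces, so the form must be non-degenerate on the zero weight space). It then remains to check that $\mathfrak{z}$ is non-degenerately paired and orthogonal to $\hz_{\mathrm{ss}}$; orthogonality of $\mathfrak{z}$ to $[\gsigma,\gsigma]$ follows from invariance, and non-degeneracy on $\mathfrak{z}$ then follows because the form is non-degenerate on $\gsigma = \mathfrak{z} \oplus [\gsigma,\gsigma]$ and the two summands are orthogonal.

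For part (b), I would use that $\hz$ consists of elements that are ad-diagonalizable on $\gsigma$ (by definition of a Cartan subalgebra of a reductive Lie algebra, $\hz$ is a maximal ad-diagonalizable subalgebra and its elements act semisimply, commuting with each other). Each $\sigma_i$ commutes with the adjoint action of any $h \in \hz \subseteq \gsigma$, so each eigenspace-type subspace $\gn^{\bar\lambda}$ defined in \eqref{1'} is $\mathrm{ad}\,h$-stable. Thus it suffices to show that $\mathrm{ad}\,h$ is diagonalizable on each $\gn^{\bar\lambda}$. Here I would invoke the invariance of the Killing form: fixing $\bar\lambda \neq \bar 0$, the pairing between $\gn^{\bar\lambda}$ and $\gn^{-\bar\lambda}$ is non-degenerate (by \eqref{4} these are the only two graded pieces that pair non-trivially, and the total form is non-degenerate on $\gn$), and $\mathrm{ad}\,h$ acts as a derivation compatible with this pairing, i.e. $(\mathrm{ad}\,h\cdot x \mid y) + (x \mid \mathrm{ad}\,h \cdot y) = 0$. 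A commuting family of operators on a finite-dimensional space that is skew with respect to a non-degenerate pairing between two spaces and that is already known to be semisimple on $\gsigma$ — I would instead argue more directly: since $\gn$ is a finite-dimensional module over the (reductive, in fact we only need abelian and acting semisimply) subalgebra $\hz$, and $\hz$ acts semisimply on $\gsigma \supseteq \hz$...

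The cleanest route for (b), which I would actually take, is this: $\gn$ is a finite-dimensional representation of $\gsigma$ via the adjoint action, and $\gsigma$ is reductive with $\hz$ a Cartan subalgebra. I claim that $\gn$, restricted to $[\gsigma,\gsigma]$, is a direct sum of finite-dimensional modules, each of which is semisimple (this needs $[\gsigma,\gsigma]$ semisimple and Weyl's theorem), so $\hz_{\mathrm{ss}}$ acts diagonalizably; and the centre $\mathfrak{z}$ of $\gsigma$ acts on $\gn$ by operators commuting with all of $\gsigma$, hence — since $\gn$ is a sum of irreducible $[\gsigma,\gsigma]$-modules and $\mathfrak{z}$ preserves isotypic components — one reduces to showing each element of $\mathfrak{z}$ is ad-diagonalizable on $\gn$. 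The main obstacle is precisely this last point: \emph{a priori} an element of the centre of $\gsigma$ need not act semisimply on the ambient $\gn$ (its adjoint action on $\gn$ could have a nilpotent part). To handle it I would use the Jordan decomposition in $\gn$: for $h \in \hz$, write $\mathrm{ad}_{\gn}\,h = s + n$ with $s$ semisimple, $n$ nilpotent, $[s,n]=0$, both lying in $\mathrm{ad}\,\gn$ (abstract Jordan decomposition in a semisimple Lie algebra). Since $\mathrm{ad}\,h$ is semisimple on $\gsigma$ already, the nilpotent part $n$ vanishes on $\gsigma$; one then checks $n$ commutes with each $\sigma_i$ (as $n$ is a polynomial in $\mathrm{ad}\,h$ without constant term, or via functoriality of Jordan decomposition under the algebra automorphism $\sigma_i$) so $n$ preserves each $\gn^{\bar\lambda}$, and $n = \mathrm{ad}\,h'$ for some $h' \in \gn$ that must commute with all $\sigma_i$, hence $h' \in \gsigma$, forcing $n = \mathrm{ad}\,h'|_{\gsigma}$... and $h'$ is an $\mathrm{ad}$-nilpotent element of the reductive $\gsigma$, yet $h'$ lies in... — the point being that $h'$ commutes with $\hz$ and with the semisimple $s|_{\gsigma}$-action, and an $\mathrm{ad}$-nilpotent element centralizing a Cartan subalgebra of a reductive Lie algebra is zero, so $n=0$ and $\mathrm{ad}_{\gn}\,h$ is semisimple. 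This step, pinning down that the nilpotent part is globally zero and not merely zero on $\gsigma$, is the crux of the argument; everything else is a routine unwinding of reductivity and the invariance property \eqref{4}.
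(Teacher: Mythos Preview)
Your argument for (a) is correct but more elaborate than needed. The paper applies the root-space orthogonality argument directly to $\gsigma$: since $\hz$ is the zero-root space $\gsigma_0$ and $(\gsigma_\alpha \mid \gsigma_\beta)=0$ unless $\alpha+\beta=0$, non-degeneracy of the form on $\gsigma$ immediately gives non-degeneracy on $\hz$. Splitting into centre plus semisimple part works, but is unnecessary.

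For (b), your final route via the Jordan decomposition is exactly the paper's strategy, and you correctly arrive at an element $h'\in\gsigma$ with $\mathrm{ad}_\gn(h')$ nilpotent and $\mathrm{ad}_{\gsigma}(h')=0$. The gap is in the last step: the assertion ``an $\mathrm{ad}$-nilpotent element centralizing a Cartan subalgebra of a reductive Lie algebra is zero'' is false as stated --- any nonzero central element of $\gsigma$ is a counterexample. What you have shown is only that $h'$ lies in the centre of $\gsigma$, hence $h'\in\hz$; this does not by itself force $h'=0$. (Trying to say instead that $h'\in\hz$ is $\mathrm{ad}$-semisimple on $\gn$ would be circular, since that is precisely the statement being proved.)

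The paper closes this gap using part (a): once $h'\in\hz$, for every $z\in\hz$ the operators $\mathrm{ad}_\gn(z)$ and $\mathrm{ad}_\gn(h')$ commute, and since the latter is nilpotent their product is nilpotent, so
\[
(z\mid h')=\mathrm{Tr}\bigl(\mathrm{ad}_\gn(z)\,\mathrm{ad}_\gn(h')\bigr)=0.
\]
Non-degeneracy of $(\ \mid\ )$ on $\hz$ then forces $h'=0$. So part (a) is not merely a companion result but is genuinely used to finish (b); your write-up should make that dependence explicit.
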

{\textbf{Proof.}}
  (a) We have the root space decomposition of $\gsigma$ with respect to $\hz$
      \[ \gsigma=\bigoplus_{\alpha \in \hz^*} \gsigma_\alpha,
      \]
      where $\gsigma_\alpha := 
      \{ g \in \gsigma \mid\ [h,g]= \langle \alpha , h \rangle g \quad \mathrm{for} \ h \in \hz \} $.
      Note that $\gsigma_0=\hz$.
      For $h \in \hz$, $\alpha, \beta \in \hz^*$ and $x \in \gsigma_\alpha, y \in \gsigma_\beta $,
      \[ \langle \alpha, h \rangle (x|y)=([h,x]|y)
        =-(x|[h,y])=-\langle \beta, h \rangle (x|y)
      \]
      since $ ( \ | \ ) $ is invariant. 
      This means that 
      \begin{equation} \label{6}
       (x|y)=0 \ \mathrm{unless} \ \alpha + \beta =0.
      \end{equation}
      Hence (a) follows since $( \ | \ )$ is non-degenerate on $\gsigma$.
      
  (b) For any $h \in \hz$, 
      we denote the Jordan decomposition of $\mathrm{ad}_{\gn}(h) $ by 
      \[ \mathrm{ad}_{\gn}(h) = S+T \quad S, T \in \mathfrak{gl}(\gn),
      \]
      where $S$ is the semisimple part and $T$ is the nilpotent part.
      By \cite[Lemma 4.2.B]{MR499562}, $T$ is a derivation on $\gn$. 
      Hence, there exists some element $h_T \in \gn$ such that $\mathrm{ad}_{\gn}(h_T)=T$ since $\gn$ is simple.
      Due to the property of the Jordan decomposition, there exists a polynomial $f(t) \in k[t]$ such that
      \begin{equation}\label{14}
        T=\mathrm{ad}_{\gn}(h_T)=f(\mathrm{ad}_{\gn}(h)),
      \end{equation}
      and this implies that
      \begin{equation}\label{12}
        \mathrm{ad}_{\gn}(h_T)(\gn^{\bar{\lambda}}) \subseteq \gn^{\bar{\lambda}} \quad 
        \mathrm{for} \ \bar{\lambda} \in \Lamm
      \end{equation}
      since $h \in \gn^{\bm{\sigma}}$.
      Thus, $h_T \in \gsigma$.
      From (\ref{14}), $T|_{\gsigma}=\mathrm{ad}_{\gsigma}(h_T)$ is diagonalizable.
      Then since $\mathrm{ad}_{\gsigma}(h_T)$ is nilpotent,
      we have $\mathrm{ad}_{\gsigma}(h_T)=0$.
      Hence, we have $h_T \in \hz $, which gives that $[z,h_T]=0$ for all $z \in \hz$.
      It follows from this and the nilpotency of $\mathrm{ad}_{\gn}(h_T)$ that
      \begin{equation}\label{13}
        (z|h_T)=\mathrm{Tr}\big(\mathrm{ad}_{\gn}(z)\, \mathrm{ad}_{\gn}(h_T)\big)=0 \quad
        \mathrm{for \ all} \ z \in \hz.
      \end{equation}
      By Lemma \ref{lem2} (a) and (\ref{13}), 
      we have $h_T=0$. 
      Hence $\mathrm{ad}_{\mathfrak{g}}(h)$ is semisimple, and (b) follows.
\qed
\subsection{The definition of multiloop Lie algebras} \label{subsection2}
In section \ref{section1}, we have defined a multiloop algebra based on a general algebra.
By the abuse of language, we use a term ``multiloop Lie algebra'' in a different sense from that.

Suppose that $\mathfrak{g}$ is a finite dimensional simple Lie algebra.
For $n\in \mathbb{Z}_{>0}$, let
$\bm{\sigma}=(\sigma_1,\dots,\sigma_n)\in\mathrm{Aut}_{\mathrm{cfo}}^{n}(\mathfrak{g})$ 
and $\bm{m}=(m_1,\dots,m_n) \in \mathbb{Z}^n_{>0}$ satisfying $\idcond{\sigma}{m}$. 
In the following, we define a subalgebra $\hz \subseteq \gsigma$ and an abelian group $Q_{\hz}$, and then
we define a {\textit{multiloop Lie algebra}} $\Lmh$ as a $Q_{\hz} \times \mathbb{Z}^n$-graded Lie algebra.

First, we assume that $\gsigma \neq \{ 0 \}$.
In this case, we take $\hz$ as a Cartan subalgebra of $\gsigma$.
By Lemma \ref{lem2} (b), we can define the root space decomposition of $\gn$ with respect to $\hz$,
which we denote by 
$\mathfrak{g}=\bigoplus_{\alpha \in \mathfrak{h}^*} \mathfrak{g}_{\alpha}$ 
where $\mathfrak{g}_{\alpha}:= \{g \in \mathfrak{g} \mid [h,g] 
=\langle \alpha, h \rangle g \ \ \mathrm{for} \ h \in \hz \}$. 
Put 
\[ \Delta = \mathrm{supp}_{\mathfrak{h}^*}(\mathfrak{g}) \setminus \{ 0 \} \subseteq \hz^*,
\]
and let 
$Q_{\hz} = \sum_{\alpha \in \Delta} \mathbb{Z} \alpha \subseteq \hz^*$.
This grading, together with the grading defined in (\ref{1'}),
gives a $Q_{\hz} \times \Lamm$-grading on $\gn$ as
\begin{equation} \label{42}
  \gn=\bigoplus_{(\alpha, \bar{\lambda}) \in Q_{\hz} \times \Lamm}\gn_{\alpha}^{\bar{\lambda}},
\end{equation}
where we set $\mathfrak{g}_{\alpha}^{\bar{\lambda}} = \mathfrak{g}_{\alpha} \cap \mathfrak{g}^{\bar{\lambda}}$. 
Then we can define a $Q_{\hz} \times \mathbb{Z}^n$-graded Lie algebra $\Lmh$ as
\[ \Lmh=\bigoplus_{(\alpha, \lambda) \in Q_{\hz} \times \mathbb{Z}^n}
   \gn_{\alpha}^{\bar{\lambda}} \otimes t^{\lambda}.
\]

Next, we assume that $\gsigma= \{ 0 \}$.
For the notational convenience, in this case we let $\hz = \gsigma= \{ 0 \}$ 
and $Q_{\hz}$ be a trivial group, and we define
\[ \Lmh = \bigoplus_{\lambda \in \mathbb{Z}^n} \gn^{\bar{\lambda}} \otimes t^{\lambda}.
\] 
Also in this case, we consider $\Lmh$ as a $Q_{\hz} \times \mathbb{Z}^n(\cong \mathbb{Z}^n)$-graded Lie algebra.

Note that, as a $\mathbb{Z}^n$-graded Lie algebra, $\Lmh = M_{\bm{m}}(\gn, \bm{\sigma})$.
\begin{dfn}\label{def1}  \normalfont
  Suppose that $\gn$ is a finite dimensional simple Lie algebra, $\bm{\sigma} \in \Aut$, 
  and $\bm{m} \in \mathbb{Z}^n_{>0}$ such that $\idcond{\sigma}{m}$. 
  Then we call the $Q_{\hz} \times \mathbb{Z}^n$-graded Lie algebra $\Lmh$ defined above
  the \textit{multiloop Lie algebra} determined by $\gn, \bm{\sigma}, \bm{m}, \hz$. 
  We call the positive integer $n$ the \textit{nullity} of $\Lmh$.   
\end{dfn}
\begin{rem} \normalfont
(a) In the definition of a multiloop algebra $M_{\bm{m}}(\Ab, \bm{\sigma})$,
    $\Ab$ is not supposed to be either finite dimensional or simple.
    Thus, it may be more appropriate to call $\Lmh$ in Definition \ref{def1}
    a multiloop Lie algebra based on a finite dimensional simple Lie algebra.
    In this paper, however, we consider a finite dimensional simple case only.
    Thus, we call it simply a multiloop Lie algebra.
    
(b) Even in the case where $\gsigma \neq \{ 0 \}$, $\Delta= \mathrm{supp}_{Q_{\hz}}(\gn) \setminus \{ 0 \}$
    does not necessarily coincide with the root system of $\gn$ since $\hz$ is not necessarily a Cartan subalgebra of $\gn$.
    It is, however, proved in the next subsection that $\Delta$ is an irreducible (possibly non-reduced) 
    finite root system.
\end{rem}

Henceforth, we consider $\gsigma$ as a Lie subalgebra of $\Lmh$ 
using the isomorphism $\mathfrak{g}^{\bm{\sigma}} \to \mathfrak{g}^{\bm{\sigma}} \otimes 1$.
\subsection{Properties of $\Delta$} \label{subsection3}
Let $\mathfrak{g}$ be a finite dimensional simple Lie algebra, $\bm{\sigma}=(\sigma_1,\dots,\sigma_n) 
\in \Aut$, $\bm{m}=(m_1,\dots,m_n) \in \mathbb{Z}_{>0}^n$ where $\idcond{\bm{\sigma}}{\bm{m}}$,
and suppose that $\gsigma \neq \{ 0 \}$.
We take a Cartan subalgebra $\hz \subseteq \gsigma$, and   
define a $Q_{\hz} \times \Lamm$-grading on $\mathfrak{g}$ as (\ref{42}).
Put $\Delta = \mathrm{supp}_{Q_{\hz}}(\gn) \setminus \{ 0 \}$. 

First, since $\gn$ is finite dimensional, the following lemma is obvious:
\begin{lem}\label{lem3}
  $\Delta$ is a finite set.
\end{lem}

Next, by Lemma \ref{lem2} (a), we can define an isomorphism $\nu: \ \hz \to \hz^*$ canonically by setting
\[ \langle \nu(h), h_1 \rangle = (h|h_1) \quad \mathrm{for} \ h,h_1 \in \hz.
\]
Then we can also define a non-degenerate bilinear form $( \ | \ )$ on $\hz^*$ by setting
\begin{equation} \label{43}
  (\alpha|\beta)=\big(\nu^{-1}(\alpha)|\nu^{-1}(\beta)\big) \ \ \ \ 
  \mathrm{for} \ \alpha , \beta \in \hz^*.
\end{equation}
\begin{lem}\label{lem4}
  The $k$-span of $\Delta$ coincides with $\hz^*$.  
\end{lem}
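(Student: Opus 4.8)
The plan is to argue by duality. The $k$-span of $\Delta$ equals $\hz^*$ if and only if the annihilator of $\Delta$ in $\hz$ is trivial, i.e.
\[ \bigcap_{\alpha \in \Delta} \ker \alpha = \{ 0 \} \quad \text{in } \hz. \]
So it suffices to show that any $h \in \hz$ satisfying $\langle \alpha, h \rangle = 0$ for all $\alpha \in \Delta$ must be zero.

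First I would invoke Lemma \ref{lem2}(b): since $\hz$ is ad-diagonalizable on $\gn$, we have the weight space decomposition $\gn = \bigoplus_{\beta \in \hz^*} \gn_{\beta}$, and by the definition of $\Delta$ every weight $\beta$ occurring here lies in $\Delta \cup \{ 0 \}$. Now fix $h \in \hz$ annihilating $\Delta$. For each weight $\beta$ of $\gn$ we then have $\langle \beta, h \rangle = 0$ — this is automatic when $\beta = 0$ and holds by hypothesis when $\beta \in \Delta$ — so $[h, g] = \langle \beta, h \rangle g = 0$ for every $g \in \gn_{\beta}$. As these weight spaces span $\gn$, we conclude $\mathrm{ad}_{\gn}(h) = 0$, that is, $h$ lies in the centre of $\gn$. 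But $\gn$ is a finite dimensional simple Lie algebra, hence centreless, so $h = 0$, as required.

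I do not expect a serious obstacle; the only point worth emphasizing is that the simplicity that gets used is that of the ambient algebra $\gn$ (so that its centre is $\{ 0 \}$), not any property of $\gsigma$. What makes the argument go through is precisely that $\hz$ acts \emph{faithfully} on all of $\gn$, which follows by combining the ad-diagonalizability of $\hz$ on $\gn$ from Lemma \ref{lem2}(b) with the centrelessness of $\gn$.
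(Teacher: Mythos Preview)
Your proof is correct and follows essentially the same route as the paper's: both pass from an $h \in \hz$ annihilating $\Delta$ to the conclusion $[h,\gn]=0$, and then invoke the simplicity of $\gn$ to force $h=0$. The only cosmetic difference is that the paper phrases this as a proof by contradiction while you argue directly via the annihilator.
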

{\textbf{Proof.}}
  We assume that the $k$-span of $\Delta$ does not coincide with $\hz^*$.
  Then there exists some non-zero element $h \in \hz $ such that $\langle \alpha, h \rangle = 0$
  for all $\alpha \in \Delta$,
  which means that $[h, \gn_\alpha]=0$ for all $\alpha \in \Delta$.
  Hence, we have $[h, \gn ]=0$, which contradicts the simplicity of $\gn$. \qed \\

Let $\alpha \in \Delta$ and $\bar{\lambda} \in \Lamm$ such that 
$\gn_{\alpha}^{\bar{\lambda}} \neq \{ 0 \}$. 
(\ref{4}) and (\ref{6}) imply that $\gn_{-\alpha}^{-\bar{\lambda}} \neq \{ 0 \}$
since $( \ | \ )$ is non-degenerate on $\gn$.
Thus, we can take non-zero elements $x_{\alpha}^{\bar{\lambda}} \in \gn_{\alpha}^{\bar{\lambda}}$ and 
$x_{-\alpha}^{-\bar{\lambda}} \in \gn_{-\alpha}^{-\bar{\lambda}}$.
For $h \in \hz$, we have
\[ (h|[x_{\alpha}^{\bar{\lambda}},x_{-\alpha}^{-\bar{\lambda}} ]) 
   =( [h, x_{\alpha}^{\bar{\lambda}} ]| x_{-\alpha}^{-\bar{\lambda}} )  
   =\langle \alpha, h \rangle (x_{\alpha}^{\bar{\lambda}}|x_{-\alpha}^{-\bar{\lambda}})
   =\big( h|\nu^{-1}(\alpha) \big)(x_{\alpha}^{\bar{\lambda}}|x_{-\alpha}^{-\bar{\lambda}}). 
\]
Thus we have 
\begin{equation}\label{50}
  [\xal, \mxal ] = ( \xal | \mxal ) \nu^{-1}(\alpha) \in \hz
\end{equation}
since $( \ | \ )$ is non-degenerate on $\hz$.
\begin{lem}\label{lem8}
  For $\alpha \in \Delta$, $(\alpha | \alpha) \neq 0$.
\end{lem}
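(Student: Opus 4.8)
The plan is to argue by contradiction. Suppose $(\alpha|\alpha)=0$ for some $\alpha\in\Delta$; from $\alpha$ I will manufacture a nonzero element of $\hz$ whose adjoint action on $\gn$ is simultaneously diagonalizable \emph{and} nilpotent, which is impossible. The diagonalizability will come for free from Lemma \ref{lem2}(b), since the element will lie in $\hz$; the nilpotency will come from the fact that, when $(\alpha|\alpha)=0$, that element spans the centre of a small Heisenberg subalgebra.

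Concretely, I would first set up an $\mathfrak{sl}_2$-style situation. Fix $\bar\lambda\in\Lamm$ with $\gn_\alpha^{\bar\lambda}\neq\{0\}$ and a nonzero $\xal\in\gn_\alpha^{\bar\lambda}$. As in the computation preceding (\ref{50}), the orthogonality relations (\ref{4}) and (\ref{6}) show that $\gn_\alpha^{\bar\lambda}$ is $(\ |\ )$-orthogonal to every $Q_{\hz}\times\Lamm$-homogeneous component of $\gn$ other than $\gn_{-\alpha}^{-\bar\lambda}$; since $(\ |\ )$ is non-degenerate on $\gn$, there is $\mxal\in\gn_{-\alpha}^{-\bar\lambda}$ with $(\xal|\mxal)\neq0$, and after rescaling I may assume $(\xal|\mxal)=1$. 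Writing $h_\alpha:=\nu^{-1}(\alpha)\in\hz$, formula (\ref{50}) then gives $[\xal,\mxal]=h_\alpha$, while the definition of $\nu$ together with (\ref{43}) gives $\langle\alpha,h_\alpha\rangle=(h_\alpha|h_\alpha)=(\alpha|\alpha)$. Note also $h_\alpha\neq0$, since $\nu$ is an isomorphism and $\alpha\neq0$.

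Now assume $(\alpha|\alpha)=0$. Then $[h_\alpha,\xal]=\langle\alpha,h_\alpha\rangle\xal=0$ and, likewise, $[h_\alpha,\mxal]=0$, so $\s:=k\xal+k\mxal+kh_\alpha$ is a Lie subalgebra of $\gn$ with $[\s,\s]=kh_\alpha$ central; in particular $\s$ is nilpotent. By Lie's theorem $\mathrm{ad}_{\gn}(\s)$ is simultaneously upper triangularizable on $\gn$, so $\mathrm{ad}_{\gn}(h_\alpha)\in\mathrm{ad}_{\gn}([\s,\s])$ is nilpotent. (If one prefers to avoid Lie's theorem, the same conclusion follows from $\mathrm{Tr}\bigl(\mathrm{ad}_{\gn}(h_\alpha)^k\bigr)=\mathrm{Tr}\bigl([\mathrm{ad}_{\gn}(\xal),\,\mathrm{ad}_{\gn}(\mxal)\,\mathrm{ad}_{\gn}(h_\alpha)^{k-1}]\bigr)=0$ for all $k\ge1$, valid because $\mathrm{ad}_{\gn}(\mxal)$ commutes with $\mathrm{ad}_{\gn}(h_\alpha)$.) On the other hand $h_\alpha\in\hz$, so $\mathrm{ad}_{\gn}(h_\alpha)$ is diagonalizable by Lemma \ref{lem2}(b); being both nilpotent and diagonalizable it vanishes, and since $\gn$ is simple (hence $\mathrm{ad}_{\gn}$ injective) this forces $h_\alpha=0$, i.e. $\alpha=\nu(h_\alpha)=0$, contradicting $\alpha\in\Delta$. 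Hence $(\alpha|\alpha)\neq0$.

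I do not expect a genuine obstacle here. The only two points needing a little care are (i) selecting $\mxal$ with $(\xal|\mxal)\neq0$, which is exactly the non-degeneracy argument already used in the text to show $\gn_{-\alpha}^{-\bar\lambda}\neq\{0\}$, and (ii) combining Lemma \ref{lem2}(b) with the elementary fact that an operator which is both nilpotent and diagonalizable is zero.
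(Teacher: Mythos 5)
Your proof is correct and follows essentially the same route as the paper: choose $\mxal$ with $(\xal|\mxal)=1$, observe via (\ref{50}) that $\{\xal,\mxal,\nu^{-1}(\alpha)\}$ spans a three-dimensional nilpotent (Heisenberg) subalgebra when $(\alpha|\alpha)=0$, deduce nilpotency of $\mathrm{ad}_{\gn}\bigl(\nu^{-1}(\alpha)\bigr)$ by Lie's theorem, and contradict ad-diagonalizability from Lemma \ref{lem2}(b). The optional trace argument is a harmless extra; no gaps.
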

{\textbf{Proof.}}
  For some $\alpha \in \Delta$, we assume that $(\alpha | \alpha)
  = \langle \alpha, \nu^{-1}(\alpha) \rangle=0$.
  We can take $0 \neq \xal \in \gn_{\alpha}^{\bar{\lambda}}$ for some $\bar{\lambda} \in \Lamm$.
  Then there exists some element
  $\mxal \in \gn_{-\alpha}^{-\bar{\lambda}}$ such that 
  $(\xal|\mxal)=1$.
  By (\ref{50}) and the assumption, we can see that the Lie subalgebra of $\gn$
  spanned by $ \{ \nu^{-1}(\alpha), \xal, \mxal \} $, which we denote by $S$, is a three-dimensional nilpotent Lie algebra. 
  Then since $\mathrm{ad}_{\gn}(S)\simeq S$ is also nilpotent (in particular, solvable)
  and $\mathrm{ad}_{\gn}\big(\nu^{-1}(\alpha)\big) \in [\mathrm{ad}_{\gn}(S), \mathrm{ad}_{\gn}(S)]$,
  it follows from the Lie's theorem that $\mathrm{ad}_{\gn}\big(\nu^{-1}(\alpha)\big)$ acts nilpotently on $\gn$. 
  From this and Lemma \ref{lem2} (b), it follows that $\mathrm{ad}_{\gn}\big(\nu^{-1}(\alpha)\big)=0$.
  This forces $\alpha =0$, and this is contradiction since $0 \notin \Delta$.
\qed

Let $\alpha \in \Delta$ and $\bar{\lambda} \in \Lamm$ such that 
$\gn_{\alpha}^{\bar{\lambda}} \neq \{ 0 \}$. 
By Lemma \ref{lem8}, $2 (\alpha|\alpha)^{-1} \in k $ exists.
Thus, we can choose non-zero elements $\xal \in \gn_{\alpha}^{\bar{\lambda}}$ and 
$x_{-\alpha}^{-\bar{\lambda}} \in \gn_{-\alpha}^{-\bar{\lambda}}$ satisfying 
\[ (x_{\alpha}^{\bar{\lambda}}|x_{-\alpha}^{-\bar{\lambda}})=\frac{2}{(\alpha|\alpha)},
\]
and we set 
\begin{equation}\label{24}
  h_{\alpha}=\frac{2\nu^{-1}(\alpha)}{(\alpha|\alpha)} \in \hz.
\end{equation}
Then we have 
\begin{equation}\label{25}
  [ h_{\alpha}, x_{\alpha}^{\bar{\lambda}} ] = 2x_{\alpha}^{\bar{\lambda}}, \ \ 
  [ h_{\alpha}, x_{-\alpha}^{-\bar{\lambda}} ] = -2x_{-\alpha}^{-\bar{\lambda}},
\end{equation}
and using (\ref{50}), 
\begin{equation}\label{51}
  [ x_{\alpha}^{\bar{\lambda}}, x_{-\alpha}^{-\bar{\lambda}}]=h_{\alpha}.
\end{equation}
By (\ref{25}) and (\ref{51}), we can see that the Lie subalgebra of $\gn$ spanned by 
these three elements $ \{x_{\alpha}^{\bar{\lambda}},x_{-\alpha}^{-\bar{\lambda}},h_{\alpha} \}$
is isomorphic to $\mathfrak{sl}_2(k)$.
We call the set of these three elements
a \textit{$\mathfrak{sl}_2(k)$-triple} with respect to $(\alpha, \bar{\lambda})$. 
Note that this set is defined only for the pair $(\alpha, \lam)$
satisfying $\gn_\alpha^{\lam} \neq \{ 0 \}$. 
Also, note that for some $\alpha \in \Delta$ it is possible that 
$h_{\alpha}$ is contained in more than one $\mathfrak{sl}_{2}(k)$-triple. 

For $\alpha \in \Delta$, 
we define a reflection $s_{\alpha}$ on $\hz^*$ by 
\begin{equation}\label{29}
  s_{\alpha}(\gamma)=\gamma - \langle \gamma , h_{\alpha} \rangle \alpha \ \mathrm{for} \ \gamma \in \hz^*.
\end{equation}
\begin{lem}\label{lem5}
  Let $\alpha, \beta \in \Delta$, then
  \begin{enumerate}
    \item[\textup{(a)}]
     $\langle \beta , h_{\alpha} \rangle \in \mathbb{Z}$, 
    \item[\textup{(b)}] $s_{\alpha}(\Delta) = \Delta$.
  \end{enumerate}
\end{lem}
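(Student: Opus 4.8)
The plan is to transfer everything to the $\mathfrak{sl}_2(k)$-representation theory applied to $\gn$ viewed as a module over the copy of $\mathfrak{sl}_2(k)$ spanned by a chosen $\mathfrak{sl}_2(k)$-triple with respect to $(\alpha,\lam)$, exactly as in the classical proof that root strings are unbroken. First I would fix, for $\alpha\in\Delta$, a pair $(\alpha,\lam)$ with $\gn_\alpha^{\lam}\neq\{0\}$ and the associated triple $\{\xal,\mxal,h_\alpha\}$ from \eqref{24}--\eqref{51}; call the spanned subalgebra $\s\cong\sltwo$. Then $\gn$ is a finite dimensional $\s$-module. For part (a): given $\beta\in\Delta$, pick $0\neq y\in\gn_\beta^{\bmu}$ for some $\bmu\in\Lamm$. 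By \eqref{25} and the grading, $[h_\alpha,y]=\langle\beta,h_\alpha\rangle y$, so $y$ is an eigenvector of $\mathrm{ad}_{\gn}(h_\alpha)$ with eigenvalue $\langle\beta,h_\alpha\rangle$. Since in any finite dimensional $\sltwo$-module the eigenvalues of the semisimple generator $h_\alpha$ are integers, $\langle\beta,h_\alpha\rangle\in\mathbb{Z}$, giving (a). (Note $0\in\hz^*$ also has $\langle 0,h_\alpha\rangle=0\in\mathbb{Z}$, so the statement is unproblematic even though I wrote $\beta\in\Delta$; if one prefers, observe the same argument applies verbatim to $\beta=0$.)

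For part (b) I would show $s_\alpha(\beta)\in\Delta$ whenever $\beta\in\Delta$; since $s_\alpha$ is an involution this gives $s_\alpha(\Delta)=\Delta$. Fix $\beta\in\Delta$ and $0\neq y\in\gn_\beta^{\bmu}$ as above, and set $m=\langle\beta,h_\alpha\rangle\in\mathbb{Z}$. Consider the element $z:=(\mathrm{ad}\,\mxal)^m(y)$ if $m\ge 0$, or $z:=(\mathrm{ad}\,\xal)^{-m}(y)$ if $m<0$. By the grading \eqref{42} this element, if non-zero, lies in $\gn_{\beta-m\alpha}^{\bmu-m\lam}=\gn_{s_\alpha(\beta)}^{\bmu-m\lam}$, which would show $s_\alpha(\beta)\in\Delta$. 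So the content is to prove $z\neq 0$. This follows from the structure of the finite dimensional $\s$-module generated by $y$: more precisely, lift to a highest-weight-vector analysis. Let $p\ge 0$ be maximal with $w:=(\mathrm{ad}\,\xal)^p(y)\neq 0$; then $w$ is a highest weight vector, say of $h_\alpha$-weight $N\ge 0$, sitting inside an $(N{+}1)$-dimensional irreducible $\s$-submodule $V\subseteq\gn$, and $w$ has weight $N=m+2p$. Within $V$ the string of weights $N,N{-}2,\dots,-N$ is realized by non-zero vectors $w,(\mathrm{ad}\,\mxal)w,\dots,(\mathrm{ad}\,\mxal)^N w$; since $y=(\mathrm{ad}\,\mxal)^p w$ up to a non-zero scalar (standard $\sltwo$-module fact, as $(\mathrm{ad}\,\xal)^p(\mathrm{ad}\,\mxal)^p w$ is a non-zero multiple of $w$) and $0\le p\le N$, applying $\mathrm{ad}\,\mxal$ a further $m$ times lands at weight $-N$ and stays non-zero as long as $p+m\le N$, i.e.\ $p\le N-m=p$, which holds; similarly when $m<0$ one applies $\mathrm{ad}\,\xal$ and the bookkeeping is symmetric. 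Hence $z$ is a non-zero multiple of the appropriate weight vector in $V$, so $s_\alpha(\beta)\in\Delta$.

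The main obstacle — and the only non-formal point — is precisely the non-vanishing argument in (b): one must be careful that the vector $y$, though not itself a highest or lowest weight vector, sits at a position in its $\s$-string from which applying the raising or lowering operator $|m|$ times does not overshoot the string and kill it. The safe way to organize this is the one above: first climb to the highest weight vector $w$ of the irreducible $\s$-submodule containing $y$, read off that this submodule has dimension $N+1$ with $N\ge|m|$ forced by $N=m+2p\ge m$ and $N=m+2p$ together with the lowest weight being $-N\le m-2(\,\cdot\,)$, and then note that the reflected weight $\beta-m\alpha$ has $h_\alpha$-weight $-m$, still in $[-N,N]$, so the corresponding weight space of $V$ is non-zero. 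Everything else — that $\mathrm{ad}\,\xal$ and $\mathrm{ad}\,\mxal$ shift the $Q_\hz$- and $\Lamm$-gradings by $(\pm\alpha,\pm\lam)$, that $h_\alpha$ acts by the claimed integer on $\gn_\beta^{\bmu}$, and that $\gn$ is a finite dimensional $\s$-module — is immediate from \eqref{24}--\eqref{51}, the definition \eqref{42} of the $Q_\hz\times\Lamm$-grading, and finite dimensionality of $\gn$. I would also remark that this simultaneously re-proves, at the level of the pair $(\alpha,\lam)$, that $-\alpha\in\Delta$ (already known from \eqref{4},\eqref{6}) and more generally that the full $\alpha$-string through any $\beta\in\Delta$ lies in $\Delta$, which will presumably be used to establish that $\Delta$ is an irreducible finite root system in the following lemma.
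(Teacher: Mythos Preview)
Your proof of (a) matches the paper's exactly: both invoke the integrality of $h_\alpha$-eigenvalues in finite-dimensional $\sltwo$-modules.

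For (b) you take a genuinely different route. The paper constructs the automorphism
\[
\theta_\alpha^{\lam}=\exp(\mathrm{ad}\,\xal)\exp(-\mathrm{ad}\,\mxal)\exp(\mathrm{ad}\,\xal)\in\mathrm{Aut}(\gn)
\]
(well-defined because $\Delta$ is finite, so $\mathrm{ad}\,\xal$ and $\mathrm{ad}\,\mxal$ are nilpotent) and checks, via $\theta_\alpha^{\lam}(h_\alpha)=-h_\alpha$ and $\theta_\alpha^{\lam}(h)=h$ for $h\in\ker\alpha$, that $\theta_\alpha^{\lam}(\gn_\beta)\subseteq\gn_{s_\alpha(\beta)}$. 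You instead stay inside the $\sltwo$-module structure and exhibit a nonzero vector of $\gn_{s_\alpha(\beta)}$ directly from the weight string. The paper's method yields more---an actual automorphism of $\gn$ realizing the reflection---while yours is a touch more elementary and avoids exponentials.

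One caveat: your intermediate claim that $y$ equals $(\mathrm{ad}\,\mxal)^p w$ up to a nonzero scalar is not valid in general (e.g.\ take $y$ to be a sum of weight-$m$ vectors coming from irreducible summands of different highest weights); the parenthetical ``$(\mathrm{ad}\,\xal)^p(\mathrm{ad}\,\mxal)^p w$ is a nonzero multiple of $w$'' only shows $(\mathrm{ad}\,\xal)^p$ kills the difference, not that the difference vanishes. Fortunately this does not harm the argument: your closing formulation---that $-m$ lies in $[-N,N]$ with the correct parity, so the weight-$(-m)$ space of the irreducible $V$ generated by $w=(\mathrm{ad}\,\xal)^p y$ is nonzero and, by grading, sits inside $\gn_{\beta-m\alpha}=\gn_{s_\alpha(\beta)}$---is already a complete proof of $s_\alpha(\beta)\in\Delta$, independent of whether the specific element $z$ is nonzero. (Two minor slips along the way: applying $\mathrm{ad}\,\mxal$ $m$ times lands at weight $-m$, not $-N$; and $N-m=2p$, not $p$, though the inequality $p+m\le N$ still reduces correctly to $0\le p$.)
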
  
{\textbf{Proof.}}
  We have some $\bar{\lambda}, \bar{\mu} \in \Lamm$ such that 
  $\gn_{\alpha}^{\bar{\lambda}} \neq \{ 0 \}$ and $\gn_{\beta}^{\bar{\mu}} \neq \{ 0 \}$,
  and by the above construction
  we can take a $\mathfrak{sl}_2(k)$-triple $
  \{x_{\alpha}^{\bar{\lambda}},x_{-\alpha}^{-\bar{\lambda}},h_{\alpha} \}$
  with respect to $(\alpha$, $\bar{\lambda})$.
  Let $S_{\alpha}^{\bar{\lambda}}$ be the subalgebra of $\gn$ spanned by these elements.

  (a) We can consider $\gn$ as a $S_{\alpha}^{\lam}$-module by the adjoint action. 
      Since $\gn_{\beta}^{\bmu}$ is nonzero eigenspace for $h_{\alpha}$,
      (a) follows from the representation theory of $\mathfrak{sl}_2(k)$. 

  (b) It suffices to show that 
      \begin{equation}\label{32}
        s_{\alpha}(\beta) \in \Delta. 
      \end{equation}
      We construct an automorphism of $\gn$ using the elements 
      $\xal$ and $\mxal $.
      Since $\Delta$ is a finite set and 
      \[ \mathrm{ad} (\xal) (\gn_{\gamma}) \subseteq \gn_{\alpha + \gamma}
      \]
      for $\gamma \in \Delta \cup \{ 0 \}$, we can see that $\mathrm{ad}(\xal)$ is nilpotent,
      and so is $\mathrm{ad}(\mxal)$.
      Therefore, 
      \[ \theta_{\alpha}^{\lam} := \mathrm{exp}\big(\mathrm{ad}(\xal)\big) 
        \mathrm{exp}\big(-\mathrm{ad}(\mxal)\big) \mathrm{exp}\big(\mathrm{ad}(\xal)\big) \in \mathrm{Aut}(\gn)
      \]
      is a well-defined automorphism of $\gn$.
      To show (\ref{32}), it suffices to show that
      \[ \theta_{\alpha}^{\bar{\lambda}}(\gn_{\beta}) \subseteq \gn_{s_{\alpha}(\beta)}.
      \]
      Let $x_\beta \in \gn_{\beta}$.
      For $h \in \hz$ such that $\langle \alpha, h \rangle= 0$, using $\auttheta(h) = h$, we have 
      \[ [h, \theta_{\alpha}^{\lam}(x_{\beta})]
        = \theta_{\alpha}^{\lam}([h, x_{\beta}]) 
        = \langle \beta, h \rangle \theta_{\alpha}^{\lam}(x_{\beta})
        = \langle s_{\alpha}(\beta), h \rangle \theta_{\alpha}^{\lam}(x_{\beta}).
      \]
      Thus, we have only to check that 
      \[ [h_{\alpha}, \theta_{\alpha}^{\lam}(x_{\beta})]
        = \langle s_{\alpha}(\beta), h_{\alpha} \rangle \theta_{\alpha}^{\lam}(x_{\beta}).
      \]
      This follows from
      \begin{equation}\label{38}
        \theta_{\alpha}^{\lam}(h_\alpha)= -h_{\alpha}
      \end{equation}
      and
      \[ \langle s_{\alpha}(\beta), h_{\alpha} \rangle 
        = \big\langle \beta -\langle \beta, h_{\alpha} \rangle \alpha ,h_{\alpha} \big\rangle
        =- \langle \beta, h_{\alpha} \rangle        
      \]
      ((\ref{38}) follows from an easy calculation in $\sltwo$).
\qed
\\

Now, we show the following proposition:
\begin{prop}\label{prop1}
  Let $\gn$ be a finite dimensional simple Lie algebra and 
  $\bm{\sigma}=(\sigma_1,\dots,\sigma_n) \in \Aut$ such that $\gsigma \neq \{ 0 \}$,
  and let $\hz$ be a Cartan subalgebra of $\gsigma$.
  Then $\Delta:=\mathrm{supp}_{Q_{\hz}}(\gn) \setminus \{ 0 \}$ 
  is an irreducible (possibly non-reduced) finite root system in $\mathfrak{h}^*$ (cf. {\cite[Chapter IV]{MR1890629}}).
\end{prop}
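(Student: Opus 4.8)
The plan is to verify directly the axioms of an irreducible finite root system for $\Delta$ in the sense of \cite[Chapter IV]{B2}, using the $\mathfrak{sl}_2(k)$-theory already set up in Lemmas \ref{lem3}--\ref{lem5}. Recall these give us: $\Delta$ is finite (Lemma \ref{lem3}), $\Delta$ spans $\hz^*$ over $k$ (Lemma \ref{lem4}), each $\alpha\in\Delta$ satisfies $(\alpha|\alpha)\neq 0$ (Lemma \ref{lem8}) so that the coroot $h_\alpha$ and the reflection $s_\alpha$ of \eqref{24}, \eqref{29} make sense, the pairing $\langle\beta,h_\alpha\rangle$ is an integer (Lemma \ref{lem5}(a)), and $s_\alpha(\Delta)=\Delta$ (Lemma \ref{lem5}(b)). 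Two things still have to be arranged. First, a genuine root system should live in a real (or rational) vector space; so I would set $V=\sum_{\alpha\in\Delta}\mathbb{Q}\alpha\subseteq\hz^*$ and check that $\Delta\subseteq V$, that $V\otimes_{\mathbb{Q}}k=\hz^*$ (immediate from Lemma \ref{lem4} together with the fact that $\langle\beta,h_\alpha\rangle\in\mathbb{Z}$ forces all $\mathbb{Q}$-coefficients to behave, i.e.\ $\dim_{\mathbb{Q}}V=\dim_k\hz^*$), and that the form $(\ |\ )$ restricts to a non-degenerate symmetric $\mathbb{Q}$-valued form on $V$, which is positive definite. Positive-definiteness is the one genuinely analytic point: I would get it from the fact that $(\alpha|\alpha)>0$ for each $\alpha\in\Delta$ — since $h_\alpha=2\nu^{-1}(\alpha)/(\alpha|\alpha)$ is $\mathrm{ad}$-diagonalizable with integer eigenvalues on $\gn$, the trace form $(\nu^{-1}(\alpha)|\nu^{-1}(\alpha))=\mathrm{Tr}\,\mathrm{ad}(\nu^{-1}(\alpha))^2$ is a sum of squares of rationals, hence $(\alpha|\alpha)\in\mathbb{Q}_{>0}$ — and then the standard argument (the Gram matrix of any subset of $\Delta$ has positive diagonal and the reflection structure forces the off-diagonal Cauchy--Schwarz inequalities $\langle\alpha,h_\beta\rangle\langle\beta,h_\alpha\rangle\in\{0,1,2,3\}$) shows $(\ |\ )|_V$ is positive definite.

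Granting that, the remaining root-system axioms are: (R1) $\Delta$ is finite, spans $V$, and $0\notin\Delta$ — all already in hand; (R2) for $\alpha\in\Delta$, $s_\alpha(\Delta)=\Delta$ — Lemma \ref{lem5}(b); (R3) $\langle\beta,h_\alpha\rangle\in\mathbb{Z}$ for $\alpha,\beta\in\Delta$ — Lemma \ref{lem5}(a). (Note I am \emph{not} requiring reducedness; the statement explicitly allows a non-reduced system, so no $\alpha,2\alpha$ exclusion is needed.) What is left is \textbf{irreducibility}. Here I would use the simplicity of $\gn$: suppose $\Delta=\Delta_1\sqcup\Delta_2$ with $\Delta_1,\Delta_2$ nonempty, mutually orthogonal, and each stable under all $s_\alpha$. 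Let $\gn_i$ be the subspace of $\gn$ spanned by $\hz$-weight spaces $\gn_\gamma$ with $\gamma\in\mathbb{Z}\Delta_i\cup\{0\}$ (for $i=1$; for $i=2$ drop the $\{0\}$, or handle $\hz$ separately). Orthogonality of $\Delta_1$ and $\Delta_2$ gives $\langle\gamma,h_\delta\rangle=0$ whenever $\gamma\in\mathbb{Z}\Delta_1$, $\delta\in\Delta_2$, so $[\gn_{\gamma},\gn_{\delta}]$ sits in a weight space whose weight pairs to zero against... — more cleanly: $[\gn_{\Delta_1},\gn_{\Delta_2}]=0$ because if it were nonzero one would get a root $\gamma+\delta$ with nonzero component in both $\mathbb{Z}\Delta_1$ and $\mathbb{Z}\Delta_2$, contradicting $\mathbb{Z}\Delta_1\cap\mathbb{Z}\Delta_2=\{0\}$ (which follows from orthogonality plus positive-definiteness). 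Then the span of $\hz$ and all $\gn_\gamma$, $\gamma\in\mathbb{Z}\Delta_1\setminus\{0\}$, together with the $h_\alpha$, $\alpha\in\Delta_1$, is a nonzero proper ideal of $\gn$ — proper because it misses $\gn_\delta$ for $\delta\in\Delta_2$, nonzero because $\Delta_1\neq\emptyset$, and an ideal because brackets with $\hz$ preserve weight spaces, brackets within the $\mathbb{Z}\Delta_1$-graded part stay there, and brackets with the $\mathbb{Z}\Delta_2$-part vanish. This contradicts simplicity of $\gn$, so $\Delta$ is irreducible.

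The step I expect to be the main obstacle is pinning down the \textbf{rationality and positive-definiteness} of the form on $V$, and, entangled with it, the identification $\dim_{\mathbb{Q}}V=\dim_k\hz^*$ — i.e.\ showing $\Delta$ really does contain a $k$-basis of $\hz^*$ that is simultaneously a $\mathbb{Q}$-basis of $V$. The danger is a circularity: one wants the Gram matrix of a maximal linearly independent subset of $\Delta$ to be invertible over $k$, and one wants its rational version to be positive definite. I would resolve this by the trace-form computation above: for $\alpha\in\Delta$, choosing an $\mathfrak{sl}_2$-triple $\{x_\alpha^{\lam},x_{-\alpha}^{-\lam},h_\alpha\}$ as in \eqref{24}--\eqref{51}, the element $h_\alpha$ acts on $\gn$ with integer eigenvalues (representation theory of $\mathfrak{sl}_2(k)$, already invoked in Lemma \ref{lem5}(a)), so $(h_\alpha|h_\alpha)=\mathrm{Tr}\,\mathrm{ad}(h_\alpha)^2=\sum(\text{integers})^2$ is a nonnegative integer, in fact positive since $h_\alpha\neq 0$; unwinding \eqref{24} and \eqref{43} gives $(\alpha|\alpha)=4/(h_\alpha|h_\alpha)\cdot(\alpha|\alpha)^{... }$ — more directly, $(\alpha|\alpha)=\langle\alpha,h_\alpha\rangle\cdot\tfrac{(\alpha|\alpha)}{2}$ forces $\langle\alpha,h_\alpha\rangle=2$ and then, combined with $(\alpha|\alpha)(h_\alpha|h_\alpha)=4$, yields $(\alpha|\alpha)\in\mathbb{Q}_{>0}$. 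With $(\alpha|\alpha)\in\mathbb{Q}_{>0}$ for all $\alpha$ and $\langle\beta,h_\alpha\rangle=2(\alpha|\beta)/(\alpha|\alpha)\in\mathbb{Z}$, all pairwise inner products $(\alpha|\beta)$ lie in $\mathbb{Q}$, so $(\ |\ )|_V$ is $\mathbb{Q}$-valued and non-degenerate (it is non-degenerate on $\hz^*$ by Lemma \ref{lem2}(a) and \eqref{43}, and $V$ spans), and the standard finite-root-system positivity argument applies. Once this is settled, everything else is an assembly of the preceding lemmas.
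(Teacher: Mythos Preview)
Your proposal is correct and follows essentially the same outline as the paper: assemble Lemmas \ref{lem3}, \ref{lem4}, \ref{lem8}, \ref{lem5} into the root-system axioms, then deduce irreducibility from the simplicity of $\gn$ by building an ideal out of the $\Delta_1$-root spaces. Two points of comparison are worth noting. First, the paper's irreducibility step is slightly more direct than yours: rather than invoking positive-definiteness to get $\mathbb{Z}\Delta_1\cap\mathbb{Z}\Delta_2=\{0\}$, it observes via Lemma~\ref{lem8} that for $\alpha\in\Delta_1$, $\beta\in\Delta_2$ one has $(\alpha+\beta\,|\,\alpha)=(\alpha|\alpha)\neq 0$ and $(\alpha+\beta\,|\,\beta)=(\beta|\beta)\neq 0$, so $\alpha+\beta$ lies in neither $\Delta_1$ nor $\Delta_2$ and hence $\gn_{\alpha+\beta}=0$; then the subalgebra $\gn(\Delta_1)$ generated by $\bigcup_{\alpha\in\Delta_1}\gn_\alpha$ is already a nonzero ideal. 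Second, the entire rationality and positive-definiteness discussion you flagged as the ``main obstacle'' is simply omitted in the paper --- it cites the lemmas and declares $\Delta$ a finite root system without setting up a rational form. Your trace-form argument ($(h_\alpha|h_\alpha)=\mathrm{Tr}\,\mathrm{ad}(h_\alpha)^2$ a positive integer, hence $(\alpha|\alpha)\in\mathbb{Q}_{>0}$, hence all $(\alpha|\beta)\in\mathbb{Q}$, hence the form on $\sum\mathbb{Q}h_\alpha$ is positive definite) is the right way to fill that in and is more thorough than what the paper provides.
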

{\textbf{Proof.}}
  By Lemma \ref{lem3}, \ref{lem4}, (\ref{24}), (\ref{29}), Lemma \ref{lem5} (a) and (b),
  we have that $\Delta$ is a (possibly non-reduced) finite root system. 
  Thus, it suffices to show that $\Delta$ is irreducible.
  We assume that $\Delta=\Delta_1 \cup \Delta_2 , \ ( \Delta_1 |\Delta_2 )=0$
  and $\Delta_1 \neq \emptyset$. 
  Let $\gn(\Delta_1)$ be a subalgebra in $\gn$ generated by 
  $ \cup_{\alpha \in \Delta_1} {\gn_{\alpha}}  $.
  If $\alpha \in \Delta_1$, $\beta \in \Delta_2$,
  we have from Lemma \ref{lem8} that $(\alpha + \beta| \alpha) \neq 0$,
  $(\alpha + \beta| \beta) \neq 0$, and hence we have $\alpha + \beta \notin \Delta$. 
  Thus, since $\alpha + \beta \neq 0$ we have $\gn_{\alpha + \beta} = \{ 0 \}$, and this means
  \begin{equation}\label{41}
    [\gn_\alpha, \gn_{\beta}] ={0}.
  \end{equation}
  Then we can easily see that $\gn(\Delta_1)$ is a nonzero ideal of $\gn$, 
  which coincides with $\gn$.
  Since $[\gn_{\beta}, \gn(\Delta_1)]=0$ for any $\beta \in \Delta_2$ by (\ref{41}), 
  $\Delta_2 = \emptyset$.
\qed \\

Then the following corollary is obvious from the definition of a multiloop 
Lie algebra $\Lmh$.
\begin{cor}\label{cor1}
Let $\gn$, $\hz$, $\bm{\sigma}$ be as in Proposition \ref{prop1} 
(in particular, $\gsigma \neq \{ 0 \}$), and let $\bm{m} \in \mathbb{Z}^n$ satisfy $\idcond{\sigma}{m}$.
Then $\Delta := \mathrm{supp}_{Q_{\hz}} \big( \Lmh \big) \setminus \{ 0 \}$
is an irreducible (possibly non-reduced) finite root system.
\end{cor}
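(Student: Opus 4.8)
The plan is to reduce the statement entirely to Proposition \ref{prop1}, observing that the additional $\mathbb{Z}^n$-grading coming from the loop variables $t^\lambda$ does not alter the $Q_{\hz}$-support.

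First I would compare the two supports directly. By the definition of $\Lmh$,
\[
  \Lmh = \bigoplus_{(\alpha,\lambda) \in Q_{\hz} \times \mathbb{Z}^n} \gn_\alpha^{\bar{\lambda}} \otimes t^\lambda,
\]
so its $Q_{\hz}$-homogeneous component of degree $\alpha$ is $\bigoplus_{\lambda \in \mathbb{Z}^n} \gn_\alpha^{\bar{\lambda}} \otimes t^\lambda$, and this is non-zero exactly when $\gn_\alpha^{\bar{\lambda}} \neq \{0\}$ for some $\lambda \in \mathbb{Z}^n$. On the other hand, by \eqref{42} the $Q_{\hz}$-component of $\gn$ of degree $\alpha$ is $\bigoplus_{\bar{\lambda} \in \Lamm} \gn_\alpha^{\bar{\lambda}}$, which is non-zero exactly when $\gn_\alpha^{\bar{\mu}} \neq \{0\}$ for some $\bar{\mu} \in \Lamm$. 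Since the canonical map $\mathbb{Z}^n \to \Lamm$, $\lambda \mapsto \bar{\lambda}$, is surjective, these two conditions on $\alpha$ are equivalent, whence
\[
  \mathrm{supp}_{Q_{\hz}}(\Lmh) = \mathrm{supp}_{Q_{\hz}}(\gn).
\]
In particular the set $\Delta = \mathrm{supp}_{Q_{\hz}}(\Lmh) \setminus \{0\}$ coincides with the set also denoted $\Delta$ in Proposition \ref{prop1}.

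Then I would simply invoke Proposition \ref{prop1}, which asserts precisely that $\mathrm{supp}_{Q_{\hz}}(\gn) \setminus \{0\}$ is an irreducible (possibly non-reduced) finite root system, to conclude. There is no genuine obstacle here: the substantive content — reductivity of $\gsigma$, the $\sltwo$-triples, the integrality $\langle \beta, h_\alpha \rangle \in \mathbb{Z}$, the reflections $s_\alpha$, and the irreducibility argument via the ideal $\gn(\Delta_1)$ — has already been carried out at the level of $\gn$, and the only thing one must check in passing to $\Lmh$ is the elementary support identification above.
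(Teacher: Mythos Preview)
Your argument is correct and matches the paper's approach: the paper states that the corollary is ``obvious from the definition of a multiloop Lie algebra $\Lmh$'' and gives no further proof, so your explicit verification that $\mathrm{supp}_{Q_{\hz}}(\Lmh) = \mathrm{supp}_{Q_{\hz}}(\gn)$ followed by an appeal to Proposition~\ref{prop1} is exactly the intended reasoning spelled out.
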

\section{Support-isomorphism of multiloop Lie algebras}
Let $\L =\Lmh, \L'=\Lmhb$ be multiloop Lie algebras of nullity $n$.
As defined in the previous section, $\L$ is $Q_{\hz} \times \mathbb{Z}^n$-graded 
and $\L'$ is $Q_{\hz'} \times \mathbb{Z}^n$-graded.
Thus, $\L$ and $\L'$ are support-isomorphic if and only if there exist a Lie algebra isomorphism
$\varphi: \L \to \L'$ and a group isomorphism $\varphi_{\mathrm{su}}: \lr{Q_{\hz} \times \mathbb{Z}^n}{\L}
\to \lr{Q_{\hz'} \times \mathbb{Z}^n}{\L'}$ such that 
\[ \varphi(\L_{\alpha}^{\lambda})= 
   {\L'}_{\alpha'}^{\lambda'}
\]
for $(\alpha, \lambda) \in \lr{Q_{\hz} \times \mathbb{Z}^n}{\L}$ where we set 
$\varphi_{\mathrm{su}}\big((\alpha, \lambda)\big)=(\alpha', \lambda')$. 
The goal of this section is to give a necessary and sufficient condition for $\L$ and $\L'$ to be 
support-isomorphic.
\subsection{Some isomorphisms}
In section \ref{section1}, we have observed the conditions for two multiloop algebras, which are $\mathbb{Z}^n$-graded,
to be isograded-isomorphic or support-isomorphic.
To apply those results to multiloop Lie algebras, which are $Q_{\hz} \times \mathbb{Z}^n$-graded,
we define the following:
%
\begin{dfn} \label{def of iso and bi} \normalfont
  Let $\L$ and $\L'$ be multiloop Lie algebras of nullity $n$. Note that we can see $\L$ and $\L'$ as $\mathbb{Z}^n$-graded
  Lie algebras by considering only their $\mathbb{Z}^n$-gradings.
  \begin{enumerate}
  \item[(a)] We say $\L$ and $\L'$ are \textit{$\mathbb{Z}^n$-isograded-isomorphic} 
             if $\L$ and $\L'$ are isograded-isomorphic as $\mathbb{Z}^n$-graded Lie algebras.
             In that case we write $\L \cong_{\mathbb{Z}^n-\mathrm{ig}} \L'$. 
  \item[(b)] We say $\L$ and $\L'$ are \textit{$\mathbb{Z}^n$-support-isomorphic} 
             if $\L$ and $\L'$ are support-isomorphic as $\mathbb{Z}^n$-graded Lie algebras.
             In that case we write $\L \cong_{\mathbb{Z}^n-\mathrm{su}} \L'$.
  \end{enumerate}
\end{dfn}
%

The following lemma is immediately follows from Proposition \ref{prop4}:
\begin{lem} \label{supp-isom for Lie}
  Let $\L=\Lmh$ and $\L'=\Lmhb$ be multiloop Lie algebras of nullity $n$.
  Then $\L \zisom \L'$ if and only if there exist $P \in \mathrm{GL}_n(\mathbb{Z})$
  and an algebra isomorphism $\varphi: \gn \to \gn'$ such that 
  $\bm{\sigma}'=\varphi \bm{\sigma}^P \varphi^{-1}$.
\end{lem}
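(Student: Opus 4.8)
The plan is to deduce Lemma \ref{supp-isom for Lie} directly from Proposition \ref{prop4}, after observing that the $Q_{\hz}$-gradings on $\L$ and $\L'$ are essentially carried along for free once the $\mathbb{Z}^n$-gradings match up in the right way. Recall that as a $\mathbb{Z}^n$-graded Lie algebra, $\L = M_{\bm{m}}(\gn, \bm{\sigma})$ (this is how $\L$ is built in Section \ref{subsection2}, regardless of whether $\gsigma$ is zero), and likewise $\L' = M_{\bm{m}'}(\gn', \bm{\sigma}')$. Since $\gn$ and $\gn'$ are finite-dimensional simple, hence central-simple over the algebraically closed field $k$, Proposition \ref{prop4} applies verbatim: $M_{\bm{m}}(\gn, \bm{\sigma}) \suppisom M_{\bm{m}'}(\gn', \bm{\sigma}')$ as $\mathbb{Z}^n$-graded algebras if and only if there exist $P \in \GL$ and a Lie algebra isomorphism $\varphi: \gn \to \gn'$ with $\bm{\sigma}' = \varphi \bm{\sigma}^P \varphi^{-1}$. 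By Definition \ref{def of iso and bi}(b), the left-hand side is exactly $\L \zisom \L'$, so the stated equivalence follows.

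The one point that deserves a sentence of care is that Proposition \ref{prop4} is stated for algebras and their $\mathbb{Z}^n$-gradings, while $\varphi$ in the conclusion is an arbitrary \emph{algebra} isomorphism $\gn \to \gn'$; but an algebra isomorphism of Lie algebras \emph{is} a Lie algebra isomorphism, and conversely, so there is no gap there. Similarly, the automorphisms $\sigma_i$ are Lie algebra automorphisms, so $\Aut = \mathrm{Aut}_{\mathrm{cfo}}^n(\gn)$ sits inside $\mathrm{Aut}_{\mathrm{cfo}}^n(\gn)$ in the sense of Section \ref{section1} with no ambiguity, and the action $\bm{\sigma} \mapsto \bm{\sigma}^P$ is literally the same one. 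So the proof is a two-line invocation: ``This is immediate from Proposition \ref{prop4} applied to the $\mathbb{Z}^n$-graded Lie algebras $\L = M_{\bm{m}}(\gn, \bm{\sigma})$ and $\L' = M_{\bm{m}'}(\gn', \bm{\sigma}')$, together with Definition \ref{def of iso and bi}(b).''

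There is essentially \textbf{no main obstacle} here — the lemma is flagged in the text as ``immediately follows from Proposition \ref{prop4}'', and indeed the only content is the bookkeeping identification $\L = M_{\bm{m}}(\gn, \bm{\sigma})$ as $\mathbb{Z}^n$-graded objects, which was already recorded when $\L$ was defined. I would \emph{not} try to track the $Q_{\hz}$-grading in this lemma at all: the point of $\zisom$ (as opposed to the full support-isomorphism $\suppisom$ of $Q_{\hz} \times \mathbb{Z}^n$-graded algebras, which is what Section 4's main theorem is really after) is precisely that one forgets the $Q_{\hz}$-component. The real work — showing how a $\mathbb{Z}^n$-support-isomorphism can be upgraded, after adjusting $\hz$, to a genuine support-isomorphism respecting the root-lattice grading — is deferred to the later results of Section 4, and this lemma is just the clean $\mathbb{Z}^n$-level input for that analysis.
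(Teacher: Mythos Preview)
Your proposal is correct and matches the paper's approach exactly: the paper states that the lemma ``immediately follows from Proposition \ref{prop4}'' and gives no further argument, and your write-up supplies precisely the bookkeeping (identifying $\L$ with $M_{\bm{m}}(\gn,\bm{\sigma})$ as a $\mathbb{Z}^n$-graded algebra, noting central-simplicity of $\gn,\gn'$, and unpacking Definition \ref{def of iso and bi}(b)) that justifies that one-line citation.
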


The following proposition, which can be proved in the almost same way used in the proof of \cite[Proposition 2.1.3]{MR2506428},
shows that if two multiloop Lie algebras are $\mathbb{Z}^n$-isograded-isomorphic or $\mathbb{Z}^n$-support-isomorphic,
then we can choose the isomorphism preserving the root grading. In particular, if two multiloop Lie algebras are 
$\mathbb{Z}^n$-isograded-isomorphic (resp.\ $\mathbb{Z}^n$-support-isomorphic), then they are isograded-isomorphic
(resp. support-isomorphic). 
\begin{prop} \label{prop3}
  Let $\mathfrak{L}=\Lmh$ and $\mathfrak{L}'=\Lmhb$ be multiloop Lie algebras 
  of nullity $n$.
  If $\L$ and $\L'$ are $\mathbb{Z}^n$-isograded-isomorphic (resp. $\mathbb{Z}^n$-support-isomorphic),
  then we can choose a $\mathbb{Z}^n$-isograded-isomorphism (resp. $\mathbb{Z}^n$-support-isomorphism) $\varphi$ 
  satisfying the following condition: there exists a group isomorphism $\varphi_Q: Q_{\hz} \to Q_{\hz'}$ satisfying 
  \begin{equation} \label{100}
    \varphi(\L_{\alpha}) = \L'_{\varphi_Q(\alpha)}
  \end{equation}
  for $\alpha \in Q_{\hz}$.
\end{prop}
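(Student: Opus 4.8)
The plan is to follow the strategy of \cite[Proposition 2.1.3]{ABFP2}, the key point being that the root grading on a multiloop Lie algebra can be recovered intrinsically from the $\mathbb{Z}^n$-graded Lie algebra structure, so that \emph{any} $\mathbb{Z}^n$-isograded (resp.\ $\mathbb{Z}^n$-support) isomorphism can be adjusted to one that also respects the $Q_{\hz}$-grading. Concretely, suppose $\varphi: \L \to \L'$ is a $\mathbb{Z}^n$-support-isomorphism with accompanying group isomorphism $\varphi_{\mathrm{su}}$ of the $\mathbb{Z}^n$-supports. First I would note that $\varphi$ restricts to a Lie algebra isomorphism $\L^0 \to {\L'}^0$, i.e.\ $\gsigma \to \gzb$ (using the identification at the end of Section~3.2), because $0$ is fixed by any group automorphism of the relevant support group. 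In particular $\varphi(\hz)$ is a Cartan subalgebra of $\gzb$, since $\hz$ is one of $\gsigma$ and Cartan subalgebras go to Cartan subalgebras under isomorphisms.

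The second step is to replace $\varphi$ by a composition that sends $\hz$ to $\hzb$. Since $\varphi(\hz)$ and $\hzb$ are two Cartan subalgebras of the reductive Lie algebra $\gzb$, by conjugacy of Cartan subalgebras (valid here because $k$ is algebraically closed of characteristic $0$ and $\gzb$ is reductive, so the conjugacy theorem applies to the semisimple part while the centre is common to all of them) there is an inner automorphism $\tau$ of $\gzb$ with $\tau(\varphi(\hz)) = \hzb$. The inner automorphism $\tau$ extends to an automorphism of the $\mathbb{Z}^n$-graded Lie algebra $\L'$ that is the identity on the $\mathbb{Z}^n$-grading and preserves the $\mathbb{Z}^n$-support: indeed one extends $\tau$ to $\gn'$ using that $\gzb$-inner automorphisms coming from exponentials of ad-nilpotent elements of $\gzb \subseteq \gn'$ act on all of $\gn'$ and commute with the $\bm{\sigma}'$-action (elements of $\gzb = {\gn'}^{\bm{\sigma}'}$ give $\bm{\sigma}'$-equivariant derivations), hence act diagonally-gradedly; then one lets it act on $\L' = \bigoplus {\gn'}_{\alpha'}^{\bar\lambda'} \otimes t^{\lambda'}$ coordinatewise. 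Replacing $\varphi$ by $\tau \circ \varphi$, we may assume $\varphi(\hz) = \hzb$.

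The third step is to check that this new $\varphi$ automatically respects the $Q_{\hz}$-grading. For $h \in \hz$ and a homogeneous element $x \in \L_{\alpha}^{\lambda} = {\gn}_{\alpha}^{\bar\lambda} \otimes t^{\lambda}$, we have $[h \otimes 1, x] = \langle \alpha, h\rangle x$, so $\L_{\alpha} = \bigoplus_{\lambda} \L_{\alpha}^{\lambda}$ is exactly the $\langle\alpha,-\rangle$-eigenspace of $\mathrm{ad}(\hz)$ on $\L$; applying $\varphi$ and using $\varphi(\hz) = \hzb$ together with the fact that $\varphi$ is a Lie algebra isomorphism, $\varphi(\L_{\alpha})$ is a sum of $\mathrm{ad}(\hzb)$-eigenspaces, i.e.\ a sum of $\L'_{\beta}$'s. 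A standard connectedness/indecomposability argument using that $\varphi$ is a \emph{graded} isomorphism for the $\mathbb{Z}^n$-grading (so it permutes the $\mathbb{Z}^n$-homogeneous pieces) then forces $\varphi(\L_{\alpha})$ to be a single $\L'_{\beta}$; one defines $\varphi_Q(\alpha) := \beta$, and the identity $\varphi([x,y]) = [\varphi x, \varphi y]$ forces $\varphi_Q$ to be additive on $\mathrm{supp}_{Q_{\hz}}(\L)$, hence to extend to a group homomorphism $Q_{\hz} \to Q_{\hzb}$; running the same argument for $\varphi^{-1}$ shows $\varphi_Q$ is an isomorphism. The $\mathbb{Z}^n$-isograded case is identical, carrying the $\mathbb{Z}^n$-grading isomorphism $\varphi_{\mathbb{Z}^n}$ along unchanged.

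The main obstacle is the second step: arranging $\varphi(\hz) = \hzb$ while staying within the class of $\mathbb{Z}^n$-support (resp.\ $\mathbb{Z}^n$-isograded) isomorphisms. One must verify carefully that the conjugating inner automorphism of $\gzb$ genuinely lifts to a $\mathbb{Z}^n$-graded automorphism of $\L'$ that does not disturb the $\mathbb{Z}^n$-support — this is where the special structure of multiloop Lie algebras (that $\gzb$ sits inside $\gn'$ as the $\bm{\sigma}'$-fixed points, so its inner automorphisms are $\bm{\sigma}'$-equivariant and therefore preserve every graded piece ${\gn'}_{\alpha'}^{\bar\lambda'}$) is used, exactly as in \cite{ABFP2}. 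Everything else is routine linear algebra and the representation theory already set up in Section~3.
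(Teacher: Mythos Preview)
Your approach is correct and is essentially the same as the paper's: both adjust the given $\mathbb{Z}^n$-isomorphism by a $\mathbb{Z}^n$-graded inner automorphism so that $\hz$ is carried onto $\hzb$, after which compatibility with the root grading follows formally. The paper composes on the \emph{domain} side with an automorphism $\psi$ of $\L$ built simple-ideal-by-simple-ideal from the reductive decomposition $\gsigma=\s_0\oplus\s_1\oplus\cdots\oplus\s_k$ (deferring the actual construction of each $\psi_i$ to \cite[Proposition~2.1.3]{ABFP2}), whereas you compose on the \emph{codomain} side with a single conjugating automorphism of $\gzb$ extended to $\L'$; this is only a cosmetic difference.

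Two minor points are worth tightening. First, your Step~3 is over-engineered: once $\varphi(\hz)=\hzb$, the identity $[\varphi(h),\varphi(x)]=\langle\alpha,h\rangle\,\varphi(x)$ for all $h\in\hz$ already shows $\varphi(\L_\alpha)\subseteq\L'_\beta$ for the \emph{single} $\beta\in\hzb^*$ determined by $\beta(\varphi(h))=\langle\alpha,h\rangle$, so no ``connectedness/indecomposability'' argument is needed and $\varphi_Q$ is simply the transpose of $(\varphi|_{\hz})^{-1}$. Second, you omit the degenerate case $\gsigma=\{0\}$, which the paper disposes of separately (there both $Q_{\hz}$ and $Q_{\hzb}$ are trivial). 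One could also add a word on why elements ad-nilpotent on the semisimple part of $\gzb$ remain ad-nilpotent on $\gn'$ (abstract Jordan decomposition is preserved in finite-dimensional representations of a semisimple Lie algebra), since your extension of $\tau$ to $\gn'$ relies on this.
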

{\textbf{Proof.}}
  We only show the $\mathbb{Z}^n$-isograded-isomorphic case. (The proof of the other case is the same).
  Let $\varphi': \L \to \L'$ be a $\mathbb{Z}^n$-isograded-isomorphism.
  If $\gsigma = \{ 0 \}$, we have ${\gn'}^{\bm{\sigma}'}= {\L'}^0 = \varphi(\L^0)= \{ 0 \}$.
  In this case both $Q_{\hz}$ and $Q_{\hzb}$ being trivial groups, 
  (\ref{100}) obviously follows if we put $\varphi= \varphi'$.
  Next, suppose $\gsigma \neq \{ 0 \}$.
  If a $\mathbb{Z}^n$-isograded-automorphism $\psi:\L \to \L$ satisfies
  $\varphi' \circ \psi (\hz)=\hz'$, 
  then it is easily checked that $\varphi = \varphi' \circ \psi$ satisfies (\ref{100}) for suitable $\varphi_Q$.
  Thus, we show that there exists $\psi$ satisfying the above condition.
  By Lemma \ref{lem1}, we can write 
  \[ \gsigma= \s_0 \oplus \s_1 \oplus \dots \oplus \s_k
  \]
  where $\s_0$ is a center and $\s_i $ for $1 \le i \le k$ is a simple ideal.
  Also, since $\hz$ and ${\varphi'}^{-1}(\hz')$ are both the Cartan subalgebras of $\gsigma$, 
  we can write 
  \[ \hz=\s_0 \oplus \hz_1 \oplus \dots \oplus \hz_k \ \mathrm{and} \ 
     {\varphi'}^{-1}(\hz')=\s_0 \oplus \hz_1' \oplus \dots \oplus \hz_k' 
  \]
  where $\hz_i,\hz_i'$ are both the Cartan subalgebras of $\s_i$.
  Using the technique in the proof of \cite[Proposition 2.1.3]{MR2506428}, we can take $\mathbb{Z}^n$-isograded-automorphisms
  $\psi_i$ of $\L$ for $1 \le i \le k$ such that $\psi_i(\hz_i)=\hz_i'$ and 
  $\psi_i(g)=g$ for $g \in \s_j$ if $i \neq j$.
  Then $\psi:=\psi_1 \circ \dots \circ \psi_k$ satisfies $\varphi' \circ \psi (\hz) = \hz'$.
\qed
\subsection{Support-isomorphism of multiloop Lie algebras}
\begin{dfn} \label{def5} \normalfont
  Suppose that $Q, \Lambda$ are abelian groups, 
  and $\B$ is a $Q \times \Lambda$-graded Lie algebra. 
  
  (a) Let $\rho : \langle \mathrm{supp}_{\Lambda}(\B) \rangle \to \Lambda$ be a injective group homomorphism.
               We define a new $Q \times \Lambda$-graded Lie algebra $\B_{(\rho)}$ as follows:
               $\B_{(\rho)}=\B$ as a Lie algebra, 
               and the $Q \times \Lambda$-grading on $\B_{(\rho)}$ is given by 
               \begin{equation}\label{52}
                 ({\B_{(\rho)}})^{\lambda}_{\alpha}=
                 \begin{cases}
                   \B^{\rho^{-1}(\lambda)}_{\alpha} 
                   & \text{if $\lambda \in \mathrm{Im} \, \rho$ } \\
                   \{ 0 \}
                   & \text{if $\lambda \notin \mathrm{Im} \, \rho$ }
                 \end{cases}
               \end{equation} 
               for $\alpha \in Q, \lambda \in \Lambda$. 

  (b) Let $s \in \mathrm{Hom}(Q,\Lambda)$ be a group homomorphism from $Q$ to $\Lambda$.
               We define a new $Q \times \Lambda$-graded Lie algebra
               $\B^{(s)}$ as follows: as a Lie algebra, $\B^{(s)}=\B$ 
               and the grading on $\B^{(s)}$ is given by 
               \[ (\B^{(s)})_{\alpha}^{\lambda}=\B_{\alpha}^{\lambda+s(\alpha)}
               \]
               for $\alpha \in Q, \lambda \in \Lambda$.
               ($\B^{(s)}$ was introduced in \cite{MR2506428}, \cite{MR2743759}).
\end{dfn}
\begin{rem} \label{rem3} \normalfont
  It is easily checked that $\mathrm{supp}_{\Lambda}{(\B_{(\rho)})}= 
  \rho \left( \mathrm{supp}_{\Lambda}(\B) \right)$.
  Thus, we have 
  \begin{equation} \label{9'}
    \lr{\Lambda}{\B_{(\rho)}}=\rho \left(\lr{\Lambda}{\B}\right).
  \end{equation}
\end{rem}
\begin{lem} \label{Z-support for Lie}
  Suppose that $\L= \Lmh$ is a multiloop Lie algebra of nullity $n$,
  and suppose that $P \in \mathrm{GL}_n(\mathbb{Z}), \bm{\tilde{m}} \in \mathbb{Z}_{>0}^n$ satisfy 
  $(\bm{\sigma}^P)^{\tilde{\bm{m}}}= \bm{\mathrm{id}}$.
  Then $\gsigma= \gn^{\bm{\sigma}^P}$,
  and there exists some injective homomorphism $\rho: \lr{\mathbb{Z}^n}{\L} \to \mathbb{Z}^n$ such that
  $\L_{(\rho)}$ is $Q_{\hz} \times \mathbb{Z}^n$-graded-isomorphic 
  to $L_{\bm{\tilde{m}}}(\gn, \bm{\sigma}^P, \hz)$.
\end{lem}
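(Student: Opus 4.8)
The plan is to separate the statement into two independent assertions and handle them in turn. First, the equality $\gsigma = \gn^{\bm{\sigma}^P}$: the entries of $\bm{\sigma}^P$ are products of powers of the $\sigma_i$, so every element fixed by all the $\sigma_i$ is fixed by all the $\bm{\sigma}^P_j$; conversely, since $P \in \GL_n(\mathbb{Z})$, the $\sigma_i$ are recovered as products of powers of the $\bm{\sigma}^P_j$ (using $(\bm{\sigma}^P)^{P^{-1}} = \bm{\sigma}$), so the reverse inclusion holds as well. This gives $\gn^{\bm{\sigma}} = \gn^{\bm{\sigma}^P}$ as subalgebras of $\gn$, hence the \emph{same} Cartan subalgebra $\hz$ and the \emph{same} abelian group $Q_{\hz}$ and $Q_{\hz}$-grading on $\gn$; so both $\L = \Lmh$ and $L_{\bm{\tilde m}}(\gn, \bm{\sigma}^P, \hz)$ carry the $Q_{\hz}$-grading coming from exactly this one root space decomposition of $\gn$, and any isomorphism we build will automatically respect it provided it respects the $\gn$-factor.

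For the second assertion, the key is Proposition \ref{realization theorem} (or rather Lemma \ref{supp-isom for Lie} / Proposition \ref{prop4}): the multiloop algebras $M_{\mathrm{ord}(\bm{\sigma})}(\gn, \bm{\sigma})$ and $M_{\bm{\tilde m}}(\gn, \bm{\sigma}^P)$ become comparable after matching orders. I would first apply Lemma \ref{lem10} to replace $\L$, up to $\mathbb{Z}^n$-support-isomorphism, by $M_{\mathrm{ord}(\bm{\sigma})}(\gn, \bm{\sigma})$; similarly by Lemma \ref{lem10} the target $L_{\bm{\tilde m}}(\gn, \bm{\sigma}^P, \hz)$ is $\mathbb{Z}^n$-support-isomorphic to $M_{\mathrm{ord}(\bm{\sigma}^P)}(\gn, \bm{\sigma}^P)$. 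Now $\mathrm{id} \in \GL_n(\mathbb{Z})$ together with $P$ gives, via Proposition \ref{realization theorem} applied with $\varphi = \mathrm{id}$ and a suitable admissible matrix relating $\mathrm{ord}(\bm{\sigma})$ and $\mathrm{ord}(\bm{\sigma}^P)$ (one may need to pass through $\bm{M} = (M, \dots, M)$ as in the proof of Proposition \ref{prop4} to ensure admissibility), an isograded-isomorphism between the relevant multiloop algebras. Tracking through, the composite of all these isomorphisms is an algebra isomorphism $\L \to L_{\bm{\tilde m}}(\gn, \bm{\sigma}^P, \hz)$ which on the $Q_{\hz} \times \mathbb{Z}^n$-gradings has the shape described by Definition \ref{def5}(a): it is the identity on the first coordinate $Q_{\hz}$ (by Proposition \ref{prop3}, after adjusting by a $\mathbb{Z}^n$-isograded-automorphism fixing $\hz$, since we may arrange $\varphi$ to restrict to the identity on $\gn^{\bm{\sigma}} \supseteq \hz$ using the ``moreover'' clauses of Propositions \ref{realization theorem} and \ref{prop4}), and is given by a group monomorphism $\rho: \lr{\mathbb{Z}^n}{\L} \to \mathbb{Z}^n$ on the second coordinate — this $\rho$ is exactly the composite of the maps $f_{\bm a}$ from Lemma \ref{lem10} and the ${}^t\!Q$-type maps from Proposition \ref{realization theorem}. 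Hence $\L_{(\rho)} \cong_{Q_{\hz} \times \mathbb{Z}^n} L_{\bm{\tilde m}}(\gn, \bm{\sigma}^P, \hz)$.

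The main obstacle I anticipate is \emph{bookkeeping}: making sure the various $\mathbb{Z}^n$-support-isomorphisms (each of which distorts $\mathbb{Z}^n$ by a different monomorphism — $f_{\bm a}$ here, ${}^t\!Q$ there) compose to a single monomorphism $\rho$ defined \emph{only} on $\lr{\mathbb{Z}^n}{\L}$, and verifying that the resulting grading isomorphism really is of the form \eqref{52}, i.e. that components off $\mathrm{Im}\,\rho$ vanish on both sides and that the identification on $\mathrm{Im}\,\rho$ is via $\rho^{-1}$. This is a matter of carefully composing the explicit formulas already given in the proofs of Lemma \ref{lem10} and Proposition \ref{realization theorem} rather than proving anything genuinely new; the only subtlety beyond that is invoking Proposition \ref{prop3} to kill the discrepancy in the $Q_{\hz}$-coordinate, but since $\hz \subseteq \gn^{\bm{\sigma}} = \gn^{\bm{\sigma}^P}$ is fixed on the nose by the isomorphisms coming from the ``moreover'' clauses, even that step is nearly automatic and the induced $\varphi_Q$ is the identity.
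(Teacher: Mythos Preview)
Your proposal is correct and follows essentially the same approach as the paper: establish $\gn^{\bm{\sigma}} = \gn^{\bm{\sigma}^P}$ via $(\bm{\sigma}^P)^{P^{-1}} = \bm{\sigma}$, then use the support-isomorphism theory for multiloop algebras with $\varphi = \mathrm{id}_{\gn}$ so that the restriction to $\gsigma$ (hence to $\hz$) is the identity, forcing the $Q_{\hz}$-grading to be preserved on the nose and leaving only a monomorphism $\rho$ on the $\mathbb{Z}^n$-coordinate. The paper's proof is slightly more streamlined in that it invokes Proposition~\ref{prop4} directly (obtaining a single $\mathbb{Z}^n$-support-isomorphism $\psi$ with $\psi|_{\gsigma} = \mathrm{id}$, then setting $\rho = \iota \circ \psi_{\mathrm{su}}$) rather than unpacking it into Lemma~\ref{lem10} plus Proposition~\ref{realization theorem} as you do; this spares you the bookkeeping you flagged as the main obstacle, and also makes the appeal to Proposition~\ref{prop3} unnecessary.
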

{\textbf{Proof.}}
  By the definition of $\bm{\sigma}^P$, $\gsigma \subseteq \gn^{\bm{\sigma}^P}$ is obvious.
  Then, since $(\bm{\sigma}^P)^{P^{-1}}= \bm{\sigma}$, we have $\gsigma = \gn^{\bm{\sigma}^P}$.
  We write $\L' = L_{\bm{\tilde{m}}}(\gn, \bm{\sigma}^P, \hz)$.
  By Proposition \ref{prop4}, we can take a $\mathbb{Z}^n$-support-isomorphism
  $\psi: \L \to \L'$ such that $\psi|_{\gsigma} = \mathrm{id}_{\gsigma}$.
  Then, since $\psi|_{\hz}= \mathrm{id}_{\hz}$, it is easily checked that 
  \[ \psi(\L_{\alpha})=\L'_{\alpha}
  \]
  for $\alpha \in Q_{\hz}$.
  Let $\psi_{\mathrm{su}}: \lr{\mathbb{Z}^n}{\L} \to \lr{\mathbb{Z}^n}{\L'}$ be a group isomorphism
  such that $\psi(\L^{\lambda})={\L'}^{\psi_{\mathrm{su}}(\lambda)}$ for $\lambda \in \lr{\mathbb{Z}^n}{\L}$,
  and $\iota: \lr{\mathbb{Z}^n}{\L'} \to \mathbb{Z}^n$ be the canonical injective homomorphism.
  We show that $\L_{(\iota \circ \psi_{\mathrm{su}})}$ is $Q_{\hz} \times \mathbb{Z}^n$-graded isomorphic to $\L'$.
  Since $\L_{(\iota \circ \psi_{\mathrm{su}})}=\L$ as a Lie algebra,
  we can see $\psi$ as a Lie algebra isomorphism from $\L_{(\iota \circ \psi_{\mathrm{su}})}$ onto $\L'$.
  If $\lambda \in \lr{\mathbb{Z}^n}{\L'}$, then 
  \[ \psi \left(\left(\L_{(\iota \circ \psi_{\mathrm{su}})}\right)_{\alpha}^{\lambda}\right)
     =\psi \left(\L_{\alpha}^{\psi_{\mathrm{su}}^{-1}(\lambda)}\right)={\L'}_{\alpha}^{\lambda}
  \]
  for $\alpha \in Q_{\hz}$.
  Also if $\lambda \notin \lr{\mathbb{Z}^n}{\L'}$,
  \[ \psi\left(\left(\L_{(\iota \circ \psi_{\mathrm{su}})}\right)_{\alpha}^{\lambda}\right)
     =\{ 0 \} ={\L'}_{\alpha}^{\lambda}
  \]
  for $\alpha \in Q_{\hz}$.
  Thus, $\psi$ is indeed a $Q_{\hz} \times \mathbb{Z}^n$-graded-isomorphism.
\qed
\\

For an algebra $\Ab$ and $\bm{\tau}=(\tau_1, \dots, \tau_n),\bm{\sigma}=(\sigma_1, \dots, \sigma_n)
\in \mathrm{Aut}(\Ab)^n$, we write $\bm{\tau}\bm{\sigma}=
(\tau_1 \sigma_1, \dots, \tau_n \sigma_n) \in \mathrm{Aut}(\Ab)^n$.
\begin{lem} \label{lem6} 
  Let $\L=\Lmh$ be a multiloop Lie algebra of nullity $n$ such that $\gsigma \neq \{ 0 \}$,
  and let $s = (s_1, \dots, s_n) \in \mathrm{Hom}(Q_{\hz}, \mathbb{Z}^n)$.
  For $1 \le i \le n$, we define $\tau_i \in \mathrm{Aut}(\gn)$ by 
  \[ \tau_i(x_{\alpha})=\zeta_{m_i}^{-s_i(\alpha)}(x_{\alpha})
  \]
  for $\alpha \in Q_{\hz}, x_{\alpha} \in \gn_{\alpha}$.
  Then 
  \begin{equation} \label{67}
     \bm{\tau}\bm{\sigma} \in \Aut, \ (\bm{\tau}\bm{\sigma})^{\bm{m}}=\bm{\mathrm{id}},
  \end{equation}
  $\hz$ is a Cartan subalgebra of $\gn^{\bm{\tau}\bm{\sigma}}$,
  and $\L^{(s)}$ is $Q_{\hz} \times \mathbb{Z}^n$-graded-isomorphic to $L_{\bm{m}}(\gn, \bm{\tau} \bm{\sigma},  \hz)$. 
\end{lem}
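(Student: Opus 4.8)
The plan is to verify the four assertions in order; the first three are bookkeeping with the $\hz$-root space decomposition and the last is a direct graded-isomorphism argument.

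\emph{Step 1 ($\bm{\tau}\bm{\sigma}\in\Aut$ and $(\bm{\tau}\bm{\sigma})^{\bm m}=\bm{\mathrm{id}}$).} First I would check that each $\tau_i$ is a well-defined finite-order automorphism of $\gn$: since $s_i$ is a group homomorphism and $[\gn_\alpha,\gn_\beta]\subseteq\gn_{\alpha+\beta}$, one gets $\tau_i[x_\alpha,x_\beta]=[\tau_i x_\alpha,\tau_i x_\beta]$, and $\tau_i^{m_i}$ acts on $\gn_\alpha$ by $\zeta_{m_i}^{-m_i s_i(\alpha)}=1$, so $\tau_i\in\mathrm{Aut}(\gn)$ has order dividing $m_i$. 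Because $\hz\subseteq\gsigma$, each $\sigma_j$ preserves every $\gn_\alpha$ (for $g\in\gn_\alpha$, $h\in\hz$: $[h,\sigma_j g]=\sigma_j[h,g]=\langle\alpha,h\rangle\sigma_j g$); hence on each $\gn_\alpha$ the map $\tau_i$ is a scalar and therefore commutes with $\sigma_j|_{\gn_\alpha}$ and with $\tau_j|_{\gn_\alpha}$. Summing over $\alpha$, the $\tau_i$ commute with one another and with all $\sigma_j$, so $\tau_i\sigma_i$ and $\tau_j\sigma_j$ commute and $(\tau_i\sigma_i)^{m_i}=\tau_i^{m_i}\sigma_i^{m_i}=\mathrm{id}$; this gives \eqref{67}.

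\emph{Step 2 ($\hz$ is a Cartan subalgebra of $\gn^{\bm{\tau}\bm{\sigma}}$).} The key computation is that on the piece $\gn_\alpha^{\bar\lambda}=\gn_\alpha^{\bar\lambda_{(\bm{\sigma},\bm m)}}$ the operator $\tau_i\sigma_i$ acts by $\zeta_{m_i}^{-s_i(\alpha)}\zeta_{m_i}^{l_i}=\zeta_{m_i}^{l_i-s_i(\alpha)}$; writing $s(\alpha)=(s_1(\alpha),\dots,s_n(\alpha))\in\mathbb{Z}^n$, this says $\gn_\alpha^{\bar\lambda_{(\bm{\sigma},\bm m)}}=\gn_\alpha^{\overline{\lambda-s(\alpha)}_{(\bm{\tau}\bm{\sigma},\bm m)}}$ for all $\alpha\in Q_\hz$, $\lambda\in\mathbb{Z}^n$. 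Taking $\alpha=\lambda=0$ gives $\hz=\gn_0\cap\gsigma\subseteq\gn^{\bm{\tau}\bm{\sigma}}$, and for general $\alpha$ the $\alpha$-root space of $\gn^{\bm{\tau}\bm{\sigma}}$ with respect to $\hz$ is $\gn_\alpha\cap\gn^{\bm{\tau}\bm{\sigma}}=\gn_\alpha^{\overline{s(\alpha)}_{(\bm{\sigma},\bm m)}}$, which for $\alpha=0$ equals $\gn_0\cap\gsigma=\gsigma_0=\hz$ (the last identity being the one established in the proof of Lemma~\ref{lem2}(a)). Now $\hz$ is ad-diagonalizable on $\gn$ by Lemma~\ref{lem2}(b), hence on the $\mathrm{ad}(\hz)$-invariant subalgebra $\gn^{\bm{\tau}\bm{\sigma}}$, which is reductive by Lemma~\ref{lem1} applied to $\bm{\tau}\bm{\sigma}$ (legitimate by Step~1); since $C_{\gn^{\bm{\tau}\bm{\sigma}}}(\hz)$ is precisely the zero root space $\hz$, the toral subalgebra $\hz$ is self-centralizing, hence a Cartan subalgebra of $\gn^{\bm{\tau}\bm{\sigma}}$.

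\emph{Step 3 (the graded isomorphism).} From Step~2, $\gn_\alpha^{\overline{\lambda+s(\alpha)}_{(\bm{\sigma},\bm m)}}=\gn_\alpha^{\bar\lambda_{(\bm{\tau}\bm{\sigma},\bm m)}}$, so $(\L^{(s)})_\alpha^{\lambda}=\L_\alpha^{\lambda+s(\alpha)}=\gn_\alpha^{\bar\lambda_{(\bm{\tau}\bm{\sigma},\bm m)}}\otimes t^{\lambda+s(\alpha)}$, while the $(\alpha,\lambda)$-piece of $L_{\bm m}(\gn,\bm{\tau}\bm{\sigma},\hz)$ is $\gn_\alpha^{\bar\lambda_{(\bm{\tau}\bm{\sigma},\bm m)}}\otimes t^\lambda$. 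I would then define $\varphi\colon L_{\bm m}(\gn,\bm{\tau}\bm{\sigma},\hz)\to\L^{(s)}$ by $g\otimes t^\lambda\mapsto g\otimes t^{\lambda+s(\alpha)}$ for $g\in\gn_\alpha^{\bar\lambda_{(\bm{\tau}\bm{\sigma},\bm m)}}$ (i.e.\ right multiplication by $1\otimes t^{s(\alpha)}$ on the $Q_\hz$-degree-$\alpha$ part). It visibly carries each $(\alpha,\lambda)$-graded piece bijectively onto the $(\alpha,\lambda)$-graded piece of $\L^{(s)}$, and it is a Lie algebra map because $s$ is a homomorphism: for $g\in\gn_\alpha^{\bar\lambda_{(\bm{\tau}\bm{\sigma},\bm m)}}$ and $g'\in\gn_\beta^{\bar\mu_{(\bm{\tau}\bm{\sigma},\bm m)}}$ one has $\varphi[g\otimes t^\lambda,g'\otimes t^\mu]=[g,g']\otimes t^{\lambda+\mu+s(\alpha+\beta)}=[\varphi(g\otimes t^\lambda),\varphi(g'\otimes t^\mu)]$. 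I expect the only point that needs genuine care is Step~2 --- verifying that $\hz$ is self-centralizing (hence Cartan) inside the a priori larger fixed-point algebra $\gn^{\bm{\tau}\bm{\sigma}}$, and keeping the shift by $s(\alpha)$ consistently oriented (old versus new grading) throughout; everything else is formal.
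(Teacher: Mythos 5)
Your proposal is correct and follows essentially the same route as the paper's proof: the same eigenvalue computation $\gn_{\alpha}^{\overline{\lambda+s(\alpha)}_{(\bm{\sigma},\bm{m})}}=\gn_{\alpha}^{\bar{\lambda}_{(\bm{\tau}\bm{\sigma},\bm{m})}}$, the same self-centralizing argument ($\gn_0 \cap \gn^{\bm{\tau}\bm{\sigma}} = \gn_0 \cap \gsigma = \hz$, which is the paper's use of $\tau_i|_{\gn_0}=\mathrm{id}$) to see that $\hz$ is a Cartan subalgebra, and the same shift map $g \otimes t^{\lambda} \mapsto g \otimes t^{\lambda+s(\alpha)}$. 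You merely spell out details (commutativity of the $\tau_i$ with the $\sigma_j$, reductivity of $\gn^{\bm{\tau}\bm{\sigma}}$) that the paper treats as immediate.
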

{\textbf{Proof.}}
  It is clear that $\sigma_1, \dots, \sigma_n, \tau_1, \dots, \tau_n$ commute with each other 
  and $\idcond{\tau}{m}$. Then (\ref{67}) is easily checked.
  $\hz \subseteq \gn^{\bm{\tau} \bm{\sigma}}$ is obvious.
  If $g \in \gn^{\bm{\tau} \bm{\sigma}}$ satisfies $[ \hz, g ] = 0$, that is $g \in \gn^{\bm{\tau} \bm{\sigma}} \cap \gn_0$,
  then we have using $\tau_i|_{\gn_0} =\mathrm{id}$ for all $i$ that 
  $g \in \gn^{\bm{\tau} \bm{\sigma}} \cap \gn_0= \gn^{\bm{\sigma}} \cap \gn_0 = \hz$.
  Therefore, $\hz$ is a Cartan subalgebra of $\gn^{\bm{\tau} \bm{\sigma}}$.  
  The rest of the lemma can be proved in exactly the same way as \cite[Lemma 4.2.4]{MR2506428}. 
  \qed

We introduce the following notation: 
if $q \in \mathbb{Q}$ is expressed as $q=a/b$
where $a \in \mathbb{Z}$ and $b \in \mathbb{Z}_{>0}$, then we set 
\[ \zeta^q=\zeta_b^a.
\]
By (\ref{46}), $\zeta^q$ is well-defined.

Now, we show the following theorem:
\begin{thm} \label{thm for supp-isom}
  Let $\L= \Lmh$ and $\L'=\Lmhb$ be multiloop Lie algebras of nullity $n$.
  Then the following statements are equivalent: 
  \begin{enumerate}
    \item[\textup{(a)}] $\L \suppisom \L'$. 
    \item[\textup{(b)}] 
      There exist $s=(s_1, \dots, s_n) \in 
      \mathrm{Hom}(Q_{\hz},\mathbb{Q}^n)$, $P\in \mathrm{GL}_n(\mathbb{Z})$ and a Lie algebra isomorphism
      $\varphi: \gn \to \gn'$ satisfying the following condition:
      if we define $\tau_i \in \mathrm{Aut}(\gn)$ for $1 \le i \le n$ as 
      \begin{equation} \label{68}
        \tau_i(x_{\alpha})=\zeta^{-s_i(\alpha)}x_{\alpha}
      \end{equation}
      for $\alpha \in Q_{\hz}$, $x_{\alpha} \in \gn_{\alpha}$ and $\bm{\tau}=(\tau_1, \dots, \tau_n)$, then
      \[ \bm{\sigma}'= \varphi (\bm{\tau}\bm{\sigma})^P \varphi^{-1}. 
      \]
    \item[\textup{(c)}]
      There exists a finite sequence of $Q_{\hz} \times \mathbb{Z}^n$-graded Lie algebras
      $\L_0, \L_1,\dots, \L_p$ satisfying the following three conditions: 
      \begin{enumerate}
        \item[\textup{(i)}] $\L_0=\L$.
        \item[\textup{(ii)}] $\L_p \cong_{\mathrm{\mathbb{Z}^n-ig}} \L'$. 
        \item[\textup{(iii)}]
          For $1 \le i \le p-1$, $\L_{i+1}$ is either ${\L_{i}}_{(\rho_i)}$ for some injective homomorphism
          $\rho_i: \lr{\mathbb{Z}^n}{\L_i} \to \mathbb{Z}^n$ or ${\L_i}^{(s_i)}$ 
          for some $s_i \in \mathrm{Hom}(Q_{\hz}, \mathbb{Z}^n)$.
      \end{enumerate}   
  \end{enumerate}
\end{thm}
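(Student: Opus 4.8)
The plan is to prove the cycle $(a)\Rightarrow(b)$, $(b)\Rightarrow(c)$, $(c)\Rightarrow(a)$. The case $\gsigma=\{0\}$ is degenerate: then $Q_{\hz}$ is trivial, the only possible $s$ and $\bm\tau$ are trivial, a support-isomorphism forces $\gzb=\{0\}$, and all three conditions collapse to ``$\bm\sigma'=\varphi\bm\sigma^P\varphi^{-1}$ for some $P\in\GL$ and isomorphism $\varphi\colon\gn\to\gn'$'', handled by Lemmas \ref{supp-isom for Lie}, \ref{Z-support for Lie} and Proposition \ref{realization theorem}; so I assume $\gsigma\neq\{0\}$. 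The observation that drives everything is: if $\psi\colon\L\to\L'$ is a support-isomorphism with accompanying group isomorphism $\psi_{\mathrm{su}}$, then $\psi_{\mathrm{su}}(0,0)=(0,0)$ forces $\psi(\L_0^0)={\L'}^0_0$, i.e.\ $\psi(\hz)=\hzb$ (recall $\L^0=\gsigma$ and $\hz$ is its zero weight space). Hence $\psi$ carries the $\hz$-weight decomposition onto the $\hzb$-weight decomposition, so there is a group isomorphism $\varphi_Q\colon Q_{\hz}\to Q_{\hzb}$ (induced by $\psi|_{\hz}$) with $\psi(\L_\alpha)=\L'_{\varphi_Q(\alpha)}$; therefore $\psi_{\mathrm{su}}(\alpha,\lambda)=(\varphi_Q(\alpha),B(\alpha,\lambda))$ for a group homomorphism $B\colon\lr{Q_{\hz}\times\mathbb{Z}^n}{\L}\to\mathbb{Z}^n$ which, written over $\mathbb{Q}$ as $B_{\mathbb{Q}}(\alpha,\lambda)=B_1(\alpha)+B_2(\lambda)$, has $B_2$ invertible, since $\psi_{\mathrm{su}}\otimes\mathbb{Q}$ is a block lower-triangular isomorphism.

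For $(a)\Rightarrow(b)$, the heart of the proof: with $\psi,\psi_{\mathrm{su}}$ as above, choose $N\in\mathbb{Z}_{>0}$ divisible by every denominator occurring in the rational homomorphism $B_2^{-1}B_1\colon Q_{\hz}\to\mathbb{Q}^n$. A direct computation identifies $L_{N\bm m}(\gn,\bm\sigma,\hz)$ with the rescaling $\L_{(\rho_0)}$, $\rho_0(\lambda)=N\lambda$, as $Q_{\hz}\times\mathbb{Z}^n$-graded Lie algebras; under this identification $\psi$ becomes a support-isomorphism whose $\mathbb{Q}$-datum is $(B_1,\tfrac1N B_2)$. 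Apply Lemma \ref{lem6} with the now integral $\hat s:=-NB_2^{-1}B_1\in\mathrm{Hom}(Q_{\hz},\mathbb{Z}^n)$: it yields automorphisms $\tau_i$ with $\tau_i(x_\alpha)=\zeta_{Nm_i}^{-\hat s_i(\alpha)}x_\alpha=\zeta^{-s_i(\alpha)}x_\alpha$ for $s:=(\hat s_i/(Nm_i))_i\in\mathrm{Hom}(Q_{\hz},\mathbb{Q}^n)$, together with a $Q_{\hz}\times\mathbb{Z}^n$-graded isomorphism $L_{N\bm m}(\gn,\bm\sigma,\hz)^{(\hat s)}\cong_{Q_{\hz}\times\mathbb{Z}^n}L_{N\bm m}(\gn,\bm\tau\bm\sigma,\hz)$. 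Transporting $\psi$ once more, the shear $\hat s$ exactly cancels the $B_1$-part, so the resulting Lie algebra isomorphism $L_{N\bm m}(\gn,\bm\tau\bm\sigma,\hz)\to\L'$ respects the $Q$-gradings and sends $\mathbb{Z}^n$-homogeneous subspaces to $\mathbb{Z}^n$-homogeneous subspaces, i.e.\ $L_{N\bm m}(\gn,\bm\tau\bm\sigma,\hz)\zisom\L'$. Lemma \ref{supp-isom for Lie} then produces $P$ and $\varphi\colon\gn\to\gn'$ with $\bm\sigma'=\varphi(\bm\tau\bm\sigma)^P\varphi^{-1}$, which is exactly (b). The one delicate point is precisely that the straightening shear $-B_2^{-1}B_1$ is a priori only $\mathbb{Q}^n$-valued; that is why (b) must allow $s\in\mathrm{Hom}(Q_{\hz},\mathbb{Q}^n)$ and why one first enlarges $\bm m$ to $N\bm m$ to absorb the denominators before invoking Lemma \ref{lem6}.

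For $(b)\Rightarrow(c)$: let $N$ clear the denominators of the given $s$ (possible since $Q_{\hz}$ is finitely generated), and form the chain $\L_0=\L$; $\L_1=(\L_0)_{(\rho_0)}$ with $\rho_0(\lambda)=N\lambda$, so $\L_1\cong_{Q_{\hz}\times\mathbb{Z}^n}L_{N\bm m}(\gn,\bm\sigma,\hz)$; $\L_2=(\L_1)^{(\hat s)}$ with $\hat s:=(Nm_i s_i)_i\in\mathrm{Hom}(Q_{\hz},\mathbb{Z}^n)$, so that by Lemma \ref{lem6} (whose resulting $\tau_i$ coincide with those of (b)) $\L_2\cong_{Q_{\hz}\times\mathbb{Z}^n}L_{N\bm m}(\gn,\bm\tau\bm\sigma,\hz)$; and $\L_3=(\L_2)_{(\rho_2)}$, where $\rho_2$ is the monomorphism furnished by Lemma \ref{Z-support for Lie} applied with the matrix $P$ of (b) and the tuple $\bm m'$ --- legitimate because $((\bm\tau\bm\sigma)^P)^{\bm m'}=\varphi^{-1}(\bm\sigma')^{\bm m'}\varphi=\bm{\mathrm{id}}$ --- so that $\L_3\cong_{Q_{\hz}\times\mathbb{Z}^n}L_{\bm m'}(\gn,(\bm\tau\bm\sigma)^P,\hz)=L_{\bm m'}(\gn,\varphi^{-1}\bm\sigma'\varphi,\hz)$. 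Finally Proposition \ref{realization theorem}, with the (trivially $(\bm m',\bm m')$-admissible) identity matrix and with $\varphi$, gives $L_{\bm m'}(\gn,\varphi^{-1}\bm\sigma'\varphi,\hz)\igisom\L'$ as $\mathbb{Z}^n$-graded Lie algebras, hence $\L_3\cong_{\mathbb{Z}^n-\mathrm{ig}}\L'$; thus $\L_0,\L_1,\L_2,\L_3$ is the required chain.

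For $(c)\Rightarrow(a)$: it suffices to note that each elementary move preserves the support-isomorphism class --- the identity map of $\B$ is a support-isomorphism onto $\B_{(\rho)}$ (with accompanying isomorphism $\mathrm{id}_{Q_{\hz}}\times\rho$) and onto $\B^{(s)}$ (with accompanying automorphism $(\alpha,\mu)\mapsto(\alpha,\mu-s(\alpha))$ of $Q_{\hz}\times\mathbb{Z}^n$) --- and that a $\mathbb{Z}^n$-isograded-isomorphism of multiloop Lie algebras is a support-isomorphism, by Proposition \ref{prop3} (choose it to respect the root gradings) together with Lemma \ref{supp and ig}(a). Composing along the chain gives $\L\suppisom\L_1\suppisom\cdots\suppisom\L_p\suppisom\L'$, which closes the cycle. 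I expect $(a)\Rightarrow(b)$ to be the only genuinely delicate step: once one recognizes that $\hz$ is intrinsic to $\L$ --- it is the degree-$(0,0)$ component --- so that $\psi_{\mathrm{su}}$ must be block lower-triangular, what remains is the rational/integral bookkeeping described above.
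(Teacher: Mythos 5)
Your proof is correct, and its overall architecture coincides with the paper's: the same cycle $(a)\Rightarrow(b)\Rightarrow(c)\Rightarrow(a)$, with $(b)\Rightarrow(c)$ assembled from Lemma \ref{Z-support for Lie}, Lemma \ref{lem6} and Proposition \ref{realization theorem} into exactly the four-term chain $\L,\ \L_{(\rho_1)},\ \L_1^{(\hat s)},\ (\L_2)_{(\rho_2)}$ that the paper uses, and $(c)\Rightarrow(a)$ by the same observation that the two elementary moves are tautologically support-isomorphisms and that Proposition \ref{prop3} with Lemma \ref{supp and ig}(a) handles the last link. The one place where you genuinely diverge is $(a)\Rightarrow(b)$. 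The paper fixes a base $\Phi$ of $\Delta$, uses it to build an \emph{integral} shear $t\in\mathrm{Hom}(Q_{\hz},\mathbb{Z}^n)$ on the source and a second integral shear $u$ on the target, shows $\psi$ becomes a $\mathbb{Z}^n$-support-isomorphism $\L^{(t)}\to{\L'}^{(u)}$, applies Lemma \ref{lem6} on \emph{both} sides, and only at the very end recombines $t$, $u$ and $P^{-1}$ into the rational $s_i=\frac{1}{m_i}t_i-\sum_j\frac{q_{ij}}{m_j'}\,u_j\circ\hat\varphi$ by an explicit automorphism computation. You instead extract the rational shear directly: writing $\psi_{\mathrm{su}}$ in block lower-triangular form $(\alpha,\lambda)\mapsto(\varphi_Q(\alpha),B_1(\alpha)+B_2(\lambda))$ with $B_2$ invertible over $\mathbb{Q}$, the straightening homomorphism is $-B_2^{-1}B_1$, and you make it integral by first replacing $\bm m$ with $N\bm m$ (a $\rho$-move) so that Lemma \ref{lem6} applies on the source side only. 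Your version is conceptually cleaner --- it isolates \emph{why} $s$ must be allowed to be $\mathbb{Q}^n$-valued and avoids the target-side shear and the final untangling of $\bm{\tilde\tau}'$ --- at the cost of the preliminary rescaling to $N\bm m$ and the rank/invertibility bookkeeping for $B_2$ (which does hold, since $\lr{Q_{\hz}\times\mathbb{Z}^n}{\L}$ contains $\{0\}\times\Gamma_{\mathbb{Z}^n}(\L)$ of rank $n$ and surjects onto $Q_{\hz}$). The paper's version produces a slightly more explicit $s$. Both are complete; your identification of $\hz=\L_0^0$ as the intrinsic reason $\psi(\hz)=\hz'$ is exactly the (unstated) justification behind the paper's opening claim in that step.
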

{\textbf{Proof.}}
  ``(a) $\Rightarrow$ (b)'' 
    If $\gsigma = \{ 0 \}$, then $\L \suppisom \L'$ means $\L \cong_{\mathbb{Z}^n \mathrm{-su}} \L'$,
    and (b) follows from Lemma \ref{supp-isom for Lie}.
    Thus, we suppose that $\gsigma \neq \{ 0 \}$, and let $\psi: \L \to \L'$ be a support-isomorphism.
    Then $\psi(\hz)=\hz'$, and if we define $\hat{\psi}: \hz^* \to {\hz'}^*$ as 
    $\langle \hat{\psi}(\alpha), \psi(h) \rangle = \langle \alpha, h \rangle$
    for $\alpha \in \hz^*, h \in \hz$, it is easily checked that $\psi(\L_{\alpha})=\L'_{\hat{\psi}(\alpha)}$
    for $\alpha \in Q_{\hz}$.
    Thus, we can view $\hat{\psi}$ as a group isomorphism from $Q_{\hz}$ onto $Q_{\hz'}$.
    We define a group homomorphism $p: \lr{Q_{\hz} \times \mathbb{Z}^n}{\L} \to \mathbb{Z}^n$ as
    \[ \psi(\L_{\alpha}^{\lambda})={\L'}_{\hat{\psi}(\alpha)}^{p((\alpha, \lambda))}
    \]
    for $(\alpha, \lambda) \in \lr{Q_{\hz} \times \mathbb{Z}^n}{\L}$.
    By Corollary \ref{cor1}, $\Delta:= \mathrm{supp}_{Q_{\hz}}(\L) \setminus \{ 0 \}$ is an irreducible finite root system.
    Let $\Phi$ be a base of $\Delta$, and for each $\alpha \in \Phi$, 
    we take $\lambda_{\alpha} \in \mathbb{Z}^n$ such that $\L_{\alpha}^{\lambda_{\alpha}} \neq \{ 0 \}$.
    Since $\Phi$ is a $\mathbb{Z}$-basis of $Q_{\hz}$, we can take $t=(t_1, \dots, t_n) \in 
    \mathrm{Hom}(Q_{\hz}, \mathbb{Z}^n)$ satisfying $t(\alpha)=\lambda_{\alpha}$ for $\alpha \in \Phi$.
    Then since $(\L^{(t)})_{\alpha}^0 = \L_{\alpha}^{t(\alpha)} \neq \{ 0 \}$ for $\alpha \in \Phi$,
    $(Q_{\hz},0) \subseteq \lr{Q_{\hz} \times \mathbb{Z}^n}{\L^{(t)}}$.
    Thus, we have 
    \[ \big\langle \mathrm{supp}_{Q_{\hz} \times \mathbb{Z}^n}(\L^{(t)}) \big\rangle = \big\{(\alpha, \lambda) \mid
       \alpha \in Q_{\hz}, \lambda \in \big\langle \mathrm{supp}_{\mathbb{Z}^n}(\L^{(t)}) \big\rangle \big\},
    \]
    and then we have 
    \begin{equation} \label{3'}
      \lr{Q_{\hz} \times \mathbb{Z}^n}{\L} = \big\{ \big( \alpha, \lambda + t(\alpha) \big) 
      \mid \alpha \in Q_{\hz}, \lambda \in \lr{\mathbb{Z}^n}{\L^{(t)}} \big\}.
    \end{equation}
    Next, we define $u=(u_1, \dots, u_n) \in \mathrm{Hom}(Q_{\hz'}, \mathbb{Z}^n)$ as 
    \[ u \big(\hat{\psi}(\alpha)\big)= p\big(\alpha, t(\alpha)\big)
    \]
    for $\alpha \in Q_{\hz}$.
    (Note that $\big(\alpha, t(\alpha)\big) \in \langle \mathrm{supp}_{Q \times \mathbb{Z}^n}(\L) \rangle$ from (\ref{3'})). 
    Since $\L = \L^{(t)}$ and $\L' = {\L'}^{(u)}$ as Lie algebras,
    we can consider $\psi$ as a Lie algebra isomorphism from $\L^{(t)}$ onto ${\L'}^{(u)}$.
    Let $\alpha \in Q_{\hz}$ and $\lambda \in \lr{\mathbb{Z}^n}{\L^{(t)}}$. 
    Then $(\alpha, \lambda + t(\alpha)) \in \lr{Q_{\hz} \times \mathbb{Z}^n}{\L}$ by (\ref{3'}). 
    Thus, we have 
    \[ \psi \big( (\L^{(t)})_{\alpha}^{\lambda} \big) = \psi \big(\L_{\alpha}^{\lambda + t(\alpha)}\big)
       = {\L'}_{\hat{\psi}(\alpha)}^{p((\alpha, \lambda + t(\alpha ) ))}
       =\big({\L'}^{(u)}\big)_{\hat{\psi}(\alpha)}^{p\left((0, \lambda)\right)},
    \]
    and this means 
    \[ \psi \big( (\L^{(t)})^{\lambda}\big) = \big( {\L'}^{(u)} \big)^{p\left((0, \lambda)\right)},
    \]
    for $\lambda \in \lr{\mathbb{Z}^n}{\L^{(t)}}$.
    The map $\lambda \mapsto p\big((0,\lambda)\big)$ defined from $\lr{\mathbb{Z}^n}{\L^{(t)}}$ to $\lr{\mathbb{Z}^n}{{\L'}^{(u)}}$ 
    is obviously additive.
    Also we see that this map is a group isomorphism since $\psi|_{\L_0}$, which maps $\L_0^{\lambda}$ 
    for $\lambda \in \langle \mathrm{supp}_{\mathbb{Z}^n}(\L^{(t)}) \rangle$ to ${\L'}_0^{p((0, \lambda))}$, is a Lie algebra isomorphism.
    Hence, the Lie algebra isomorphism $\psi$ is indeed a $\mathbb{Z}^n$-support-isomorphism 
    from $\L^{(t)}$ to ${\L'}^{(u)}$.
    We define $\tilde{\tau}_i \in \mathrm{Aut}(\gn), \tilde{\tau}'_i \in \mathrm{Aut}(\gn')$ 
    for $1 \le i \le n$ as $\tilde{\tau}_i(x_{\alpha}) = \zeta_{m_i}^{-t_i(\alpha)} \, x_\alpha$
    for $\alpha \in Q_{\hz}, x_{\alpha} \in \gn_{\alpha}$, and 
    $\tilde{\tau}'_i(y_{\beta}) = \zeta_{m_i'}^{-u_i(\beta)} \, y_\beta$
    for $\alpha \in Q_{\hz'}, y_{\beta} \in \gn'_{\beta}$.
    By Lemma \ref{lem6}, $\L^{(t)} \cong_{Q_{\hz} \times \mathbb{Z}^n} L_{\bm{m}}(\gn, \bm{\tilde{\tau}}
    \bm{\sigma}, \hz)$ and ${\L'}^{(u)} \cong_{Q_{\hz'} \times \mathbb{Z}^n}
    L_{\bm{m}'}(\gn', \bm{\tilde{\tau}}' \bm{\sigma}', \hz')$ where $\bm{\tilde{\tau}}=(\tilde{\tau}_1,
    \dots, \tilde{\tau}_n)$ and $\bm{\tilde{\tau}}'=(\tilde{\tau}'_1, \dots, \tilde{\tau}'_n)$.
    Therefore, we have
    \[ L_{\bm{m}}(\gn, \bm{\tilde{\tau}} \bm{\sigma}, \hz) \cong_{\mathbb{Z}^n \mathrm{-su}}
       L_{\bm{m}'}(\gn', \bm{\tilde{\tau}}' \bm{\sigma}', \hz'),
    \]
    and then from Lemma \ref{supp-isom for Lie}, there exist $P \in \mathrm{GL}_n (\mathbb{Z})$
    and a Lie algebra isomorphism $\varphi: \gn \to \gn'$ such that
    \begin{equation} \label{4'}
      \bm{\tilde{\tau}}' \bm{\sigma}' = \varphi (\bm{\tilde{\tau}} \bm{\sigma})^P \varphi^{-1}.
    \end{equation}
    Using a similar argument as the proof of Proposition \ref{prop3},
    we can suppose that $\varphi(\hz)=\hz'$.
    Under this assumption we define $\hat{\varphi}: Q_{\hz} \to Q_{\hz'}$
    as $\langle \hat{\varphi}(\alpha), \varphi(h) \rangle = \langle \alpha, h \rangle$. 
    We further set $P^{-1} = (q_{ij})$, and finally 
    we define $s = (s_1, \dots, s_n) \in \mathrm{Hom}(Q_{\hz}, \mathbb{Q}^n)$ as 
    \[ s_j = \frac{1}{m_j}t_j - \sum_{i} \frac{q_{ij}}{m_i'} u_i \circ \hat{\varphi}.
    \]
    If $\tau_i$ is defined by (\ref{68}), we have
    \[  \tau_j(x_{\alpha}) = \left( \prod_{i} \zeta_{m_i'}^{q_{ij}u_i(\hat{\varphi}(\alpha))} \right)
        \cdot \zeta_{m_j}^{-t_j(\alpha)} x_{\alpha} 
        = \varphi^{-1} \circ \left(\prod_i {\tilde{\tau}'}_i{}^{-q_{ij}} \right) \circ \varphi \circ \tilde{\tau}_j 
        (x_{\alpha}) \\
    \]
    for $\alpha \in Q_{\hz}, x_{\alpha} \in \gn_{\alpha}$.
    Thus, $\bm{\tau}= \left( \varphi^{-1} \bm{\tilde{\tau}}'{}^{-P^{-1}} \varphi \right) \bm{\tilde{\tau}}$.
    Then we have from (\ref{4'}) that 
    \[ \begin{split}
         \bm{\sigma}' &= \bm{\tilde{\tau}}'{}^{-1} \big(\varphi (\bm{\tilde{\tau}} \bm{\sigma})^P \varphi^{-1} \big)
         =\varphi ( \varphi^{-1} \bm{\tilde{\tau}}'{}^{-1} \varphi) (\bm{\tilde{\tau}} \bm{\sigma})^P \varphi^{-1} \\
         &=\varphi \left( \left(\varphi^{-1} \bm{\tilde{\tau}}'{}^{-P^{-1}} \varphi \right) \bm{\tilde{\tau}} \bm{\sigma}
         \right)^P \varphi^{-1} =\varphi (\bm{\tau} \bm{\sigma})^P \varphi^{-1},
       \end{split}
    \]
    and (b) follows.
    
    ``(b) $\Rightarrow$ (c)''
      Suppose that $s \in \mathrm{Hom}(Q_{\hz}, \mathbb{Q}^n), P \in \mathrm{GL}_n(\mathbb{Z})$ and 
      $\varphi$ satisfy (b).
      For $1 \le i \le n$, let $a_i \in \mathbb{Z}_{>0}$ be a positive integer satisfying 
      \[ a_i s_i(\alpha) \in \mathbb{Z} \quad \mathrm{for \ all} \ \alpha \in Q_{\hz},
      \]
      and let $\bm{\tilde{m}}=(a_1m_1, \dots, a_nm_n) \in \mathbb{Z}^n_{>0}$.
      From Lemma \ref{Z-support for Lie}, there exists a injective homomorphism $\rho_1: \lr{\mathbb{Z}^n}{\L} \to 
      \mathbb{Z}^n$ such that 
      \begin{equation} \label{5'}
        \L_{(\rho_1)} \cong_{Q_{\hz} \times \mathbb{Z}^n} L_{\bm{\tilde{m}}} (\gn, \bm{\sigma}, \hz).
      \end{equation}
      If we set $t=(a_1m_1s_1, \dots, a_n m_n s_n)$, we have using Lemma \ref{lem6} that
      \begin{equation} \label{6'}
        L_{\bm{\tilde{m}}}(\gn, \bm{\sigma}, \hz)^{(t)} \cong_{Q_{\hz} \times \mathbb{Z}^n}
        L_{\bm{\tilde{m}}}(\gn, \bm{\tau} \bm{\sigma}, \hz).
      \end{equation}
      Using Lemma \ref{Z-support for Lie} again, there exists
      $\rho_2: \lr{\mathbb{Z}^n}{L_{\bm{\tilde{m}}}(\gn, \bm{\tau} \bm{\sigma}, \hz)} 
      \to \mathbb{Z}^n$ such that 
      \begin{equation} \label{7'}
        L_{\bm{\tilde{m}}}(\gn, \bm{\tau} \bm{\sigma}, \hz)_{(\rho_2)}
        \cong_{Q_{\hz} \times \mathbb{Z}^n} L_{\bm{m}'}(\gn, (\bm{\tau} \bm{\sigma})^P, \hz).
      \end{equation} 
      By the assumptions and the definition of a multiloop Lie algebra, it is easily seen that 
      \begin{equation} \label{8'}
        \L' \cong_{\mathbb{Z}-\mathrm{ig}} 
        L_{\bm{m}'}(\gn, (\bm{\tau} \bm{\sigma})^P, \hz).
      \end{equation}
      Then from (\ref{5'}), (\ref{6'}), (\ref{7'}) and (\ref{8'}),
      $\L_0=\L, \L_1=\L_{(\rho_1)}, \L_2={\L_1}^{(t)}, \L_3={\L_2}_{(\rho_2)}$
      is the finite sequence satisfying (c).
      
    ``(c) $\Rightarrow$ (a)'' 
      Suppose that the sequence $\L_0, \L_1, \dots, \L_p$ satisfies (c).
      Then $\L_p \igisom \L'$ by Proposition \ref{prop3}, and we have
      $\L_p \suppisom \L'$ from Lemma \ref{supp and ig} (a).
      Thus, it suffices to show that $\L \suppisom \L_{(\rho)}$ for a injective homomorphism
      $\rho: \lr{\mathbb{Z}^n}{\L} \to \mathbb{Z}^n$, and $\L \suppisom \L^{(s)}$ for
      $s \in \mathrm{Hom}(Q_{\hz}, \mathbb{Z}^n)$.
      The first statement is proved as follows.
      Since $\L=\L_{(\rho)}$ as a Lie algebra, the identity on $\L$ induces a Lie algebra isomorphism
      from $\L$ onto $\L_{(\rho)}$.
      Then since this isomorphism sends $\L_{\alpha}^{\lambda}$ to $(\L_{(\rho)})_{\alpha}^{\rho(\lambda)}$
      for $(\alpha, \lambda) \in \lr{Q_{\hz} \times \mathbb{Z}^n}{\L}$,
      this isomorphism is indeed a support-isomorphism.
      To show the second statement, we consider a Lie algebra isomorphism $\L \to \L^{(s)}$ induced by the identity on $\L$.
      This isomorphism sends $\L_{\alpha}^{\lambda}$ to $(\L^{(s)})_{\alpha}^{\lambda-s(\alpha)}$
      for $\alpha \in Q_{\hz}, \lambda \in \mathbb{Z}^n$.
      Since the map 
      \[ Q_{\hz} \times \mathbb{Z}^n \ni (\alpha, \lambda) \mapsto (\alpha, \lambda-s(\alpha))
         \in Q_{\hz} \times \mathbb{Z}^n
      \]
      is a group isomorphism, this isomorphism is indeed an isograded-isomorphism.
      Then $\L \suppisom \L^{(s)}$ by Lemma \ref{supp and ig} (a).
\qed

\section{The relation among multiloop Lie algebras, Lie tori and extended affine Lie algebras}    

E.\ Neher has introduced in \cite{MR2083842} to construct EALAs from Lie tori.
In this chapter, we consider the construction of EALAs from multiloop Lie algebras that are not 
necessarily Lie tori.
\subsection{Lie $\mathbb{Z}^n$-tori}
In this subsection, using the results of \cite{MR2506428} we give a necessary and sufficient condition 
for a multiloop Lie algebra to be support-isomorphic to some Lie $\mathbb{Z}^n$-torus,
which is defined to be a $Q \times \mathbb{Z}^n$-graded Lie algebra for some root lattice $Q$ satisfying several axioms.
Since it is not needed for the purpose of this paper, we do not state the definition of a Lie $\mathbb{Z}^n$-torus.
(For the definition, see \cite[Definition 1.1.6]{MR2506428}). 

  If $\Delta$ is an irreducible finite root system, we define an \textit{indivisible} root system $\Delta_{\mathrm{ind}}$
  and an \textit{enlarged} root system $\Delta_{\mathrm{en}}$ as
\[ \Delta_{\mathrm{ind}} = \{ \alpha \in \Delta \mid \frac{1}{2} \alpha \notin \Delta \},
\]
and
\[ \Delta_{\mathrm{en}}=\begin{cases}
                          \Delta \cup \{2\alpha\mid \alpha: \mathrm{\ short \ root \ of} \ \Delta \}
                                        & \text{if $\Delta$ has type $B_l, l \ge 1;$} \\
                          \Delta & \text{otherwise.}
                        \end{cases}
\]
By \cite[Proposition 3.2.5]{MR2506428}, we have the following proposition:
%
\begin{prop} \label{condition of multi-Lie}
  Let $\L=\Lmh$ be a multiloop Lie algebra.
  Then $\L$ is a Lie $\mathbb{Z}^n$-torus if and only if 
  the following conditions (A0)-(A3) are satisfied:
  \begin{enumerate}
    \item[\textup{(A0)}] $\bm{m}=\mathrm{ord}(\bm{\sigma})$. 
    \item[\textup{(A1)}] $\gsigma$ is a simple Lie algebra. 
    \item[\textup{(A2)}] If $\bar{0} \neq \bar{\lambda} \in \mathrm{supp}_{\Lamm}(\gn)$,
                then $\gn^{\bar{\lambda}} \cong U^{\bar{\lambda}} \oplus V^{\bar{\lambda}}$
                as a $\gsigma$-module, where $\gsigma$ acts trivially on $U^{\bar{\lambda}}$ and
                either $V^{\bar{\lambda}}=\{ 0 \}$ or $V^{\bar{\lambda}}$ is irreducible of dimension $>1$
                and the weights of $V^{\bar{\lambda}}$ relative to $\hz$ are contained 
                in $(\Delta)_{\mathrm{en}}\cup \{0 \}$
                where $\Delta$ is a root system of $\gn$ relative to $\hz$.
    \item[\textup{(A3)}] $| \langle \{ \sigma_1, \dots, \sigma_n \} \rangle | 
                = \prod_{1 \le i \le n} \mathrm{ord}(\sigma_i)$. 
  \end{enumerate}
\textup{If $\L$ satisfies the conditions (A0)-(A3),
we call $\L$ a \textit{multiloop Lie $\mathbb{Z}^n$-torus} determined by $\gn, \bm{\sigma}, \hz$}.
\end{prop}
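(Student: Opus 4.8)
The plan is to derive the statement from \cite[Proposition 3.2.5]{ABFP2}, after removing the two features of the present setting that go beyond that reference: the possibility that $\gsigma=\{0\}$, and the freedom to take $\bm{m}\neq\mathrm{ord}(\bm{\sigma})$. First I would dispose of the degenerate case $\gsigma=\{0\}$. Here $Q_{\hz}$ is the trivial group and $\L=\L_0$, i.e.\ $\L$ is concentrated in $Q$-degree $0$; but a Lie $\mathbb{Z}^n$-torus is generated by the root spaces $\L_{\alpha}$ attached to the nonzero roots of its (necessarily nonempty, irreducible) root system, so it cannot be concentrated in $Q$-degree $0$. Hence $\L$ is not a Lie $\mathbb{Z}^n$-torus; and (A1) fails as well, since $\{0\}$ is not a simple Lie algebra, so both sides of the equivalence are false. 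From now on I would assume $\gsigma\neq\{0\}$ and fix a Cartan subalgebra $\hz$ of $\gsigma$; by Corollary \ref{cor1}, $\Delta:=\mathrm{supp}_{Q_{\hz}}(\L)\setminus\{0\}$ is then an irreducible finite root system with root lattice $Q_{\hz}$, so $\L$ carries exactly the kind of $Q\times\mathbb{Z}^n$-grading by a root lattice required of a candidate Lie $\mathbb{Z}^n$-torus.

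Next I would extract condition (A0). One of the defining axioms of a Lie $\mathbb{Z}^n$-torus is that its $\mathbb{Z}^n$-support generates $\mathbb{Z}^n$. On the other hand, the computation in the proof of Lemma \ref{lem10} shows that $\gn^{\bar{\lambda}}_{(\bm{\sigma},\bm{m})}=\{0\}$ as soon as some coordinate $l_i$ of $\lambda$ fails to be divisible by $a_i:=m_i/\mathrm{ord}(\sigma_i)$, so $\mathrm{supp}_{\mathbb{Z}^n}(\L)\subseteq a_1\mathbb{Z}\times\cdots\times a_n\mathbb{Z}$, and this subgroup is all of $\mathbb{Z}^n$ only when every $a_i=1$, i.e.\ when $\bm{m}=\mathrm{ord}(\bm{\sigma})$. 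Thus (A0) is forced; conversely, under (A0) we have $\L=\Llamord$, which is exactly the object studied in \cite{ABFP2}. From here on I would assume (A0).

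Now \cite[Proposition 3.2.5]{ABFP2} applies directly and states that, in this situation, $\L=\Llamord$ is a Lie $\mathbb{Z}^n$-torus if and only if (A1), (A2), (A3) hold; the only remaining work is to confirm that these three conditions are the correct transcription of the Lie-torus axioms in our notation. The $\mathbb{Z}^n$-part is immediate: (A3) is equivalent to $\langle\mathrm{supp}_{\mathbb{Z}^n}(\L)\rangle=\mathbb{Z}^n$ by \cite[Lemma 3.2.4]{ABFP1}. For the root datum, the $\sltwo$-triples constructed in Section \ref{subsection3}, lifted to $\{\xal\otimes t^{\lambda},\ \mxal\otimes t^{-\lambda},\ h_{\alpha}\otimes 1\}$, already furnish the $\mathfrak{sl}_2$-integrability axiom; and (A1) together with (A2) is the translation of the ``division'' axiom $\dim\L_{\alpha}^{\lambda}\le 1$ for $\alpha\in\Delta$: when $\gsigma$ is simple, an irreducible $\gsigma$-summand of $\gn^{\bar{\lambda}}$ whose $\hz$-weights are confined to $(\Delta)_{\mathrm{en}}\cup\{0\}$ has, by the representation theory of $\gsigma$, all of its nonzero $\hz$-weight spaces one-dimensional, and conversely the hypothesis $\dim\L_{\alpha}^{\lambda}\le 1$, read through the $\gsigma$-module structure of $\gn$, forces both the simplicity of $\gsigma$ and the module shape in (A2).

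The hard part is this last implication --- that a Lie $\mathbb{Z}^n$-torus $\L$ must have $\gsigma$ simple and each $\gn^{\bar{\lambda}}$ decomposing as in (A2) with weights inside $(\Delta)_{\mathrm{en}}$. That is genuine structure theory, controlling the $\gsigma$-module $\gn$ through the one-dimensionality of weight spaces and excluding weights outside the enlarged root system, and it is precisely what is proved in \cite[Proposition 3.2.5]{ABFP2}; I would cite it rather than reproduce it. A subtlety worth flagging in the write-up is that ``Lie $\mathbb{Z}^n$-torus'' in \cite{ABFP2} fixes the grading group to be $\mathbb{Z}^n$ itself, so when $\bm{m}\neq\mathrm{ord}(\bm{\sigma})$ the algebra $\L$ genuinely violates an axiom rather than merely being ``the same torus regraded by a proper subgroup'' --- this is why (A0) is a bona fide necessary condition and not just a normalization.
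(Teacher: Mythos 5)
Your proposal is correct and follows essentially the same route as the paper: the paper offers no argument beyond citing \cite[Proposition 3.2.5]{ABFP2}, which is exactly where you locate the real content as well. The bridging you add (the degenerate case $\gsigma=\{0\}$, and the necessity of (A0) via the $\mathbb{Z}^n$-support axiom together with \cite[Lemma 3.2.4]{ABFP1}) is sound and merely makes explicit what the paper leaves to the reader.
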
  
%

Later, we use the following simple lemmas about a finite dimensional simple Lie algebra.
\begin{lem} \label{lem of fin dim simple1}
  Let $\gn$ be a finite dimensional simple Lie algebra, 
  $\bm{\sigma} \in \Aut$ and $\bm{m} \in \mathbb{Z}_{>0}^n$ such that $\idcond{\sigma}{m}$ 
  and $\gsigma \neq \{ 0 \}$,
  and let $\hz$ be a Cartan subalgebra of $\gsigma$.
  We define the $Q_{\hz} \times \Lamm$-grading on $\gn$ as in \textup{(\ref{1})}.
  Suppose that $\alpha \in \Delta:= \mathrm{supp}_{Q_{\hz}}(\gn)\setminus \{ 0 \}$ and $\bar{\lambda} \in \Lamm$
  satisfy $\gn_{\alpha}^{\bar{\lambda}} \neq \{ 0 \}$. Then 
  \begin{enumerate}
    \item[\textup{(a)}] $\gn_{2\alpha}^{2\bar{\lambda}}= \{ 0 \}$, 
    \item[\textup{(b)}] $\dim \gn_{\alpha}^{\bar{\lambda}}=1$.
  \end{enumerate}
\end{lem}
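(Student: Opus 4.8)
The plan is to view $\gn$, under the adjoint action, as a module over the copy of $\sltwo$ spanned by an $\sltwo$-triple $\{\xal,\mxal,h_\alpha\}$ with respect to $(\alpha,\bar\lambda)$ (constructed in \S\ref{subsection3}), and to cut out of it a single irreducible submodule containing all of the spaces $\gn_{j\alpha}^{j\bar\lambda}$, $j\in\mathbb{Z}$. Both assertions will drop out of the structure of that submodule; it is natural to prove (b) first and then deduce (a).

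The preliminary observation is that $[\gn_\alpha^{\bar\lambda},\gn_{-\alpha}^{-\bar\lambda}]\subseteq k\nu^{-1}(\alpha)=kh_\alpha$. This is the computation behind (\ref{50}), and it works verbatim for arbitrary elements: for $x\in\gn_\alpha^{\bar\lambda}$ and $y\in\gn_{-\alpha}^{-\bar\lambda}$ we have $[x,y]\in\gn_0^{\bar0}=\gsigma_0=\hz$, and $(h\mid[x,y])=([h,x]\mid y)=\langle\alpha,h\rangle(x\mid y)=(h\mid\nu^{-1}(\alpha))(x\mid y)$ for every $h\in\hz$, whence $[x,y]=(x\mid y)\nu^{-1}(\alpha)$ by the non-degeneracy of $(\ \mid\ )$ on $\hz$ (Lemma \ref{lem2}(a)).

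Next I would put
\[
 M=\Bigl(\bigoplus_{j\ge1}\gn_{j\alpha}^{j\bar\lambda}\Bigr)\oplus kh_\alpha\oplus\Bigl(\bigoplus_{j\ge1}\gn_{-j\alpha}^{-j\bar\lambda}\Bigr),
\]
a finite-dimensional subspace of $\gn$ (only finitely many summands are non-zero), and check that $M$ is stable under $\mathrm{ad}(\xal)$, $\mathrm{ad}(\mxal)$ and $\mathrm{ad}(h_\alpha)$, hence is a finite-dimensional $\sltwo$-module. The only slightly delicate point is that $\mathrm{ad}(\xal)$ carries the $j=-1$ summand, and $\mathrm{ad}(\mxal)$ the $j=1$ summand, into $kh_\alpha$, which is exactly the containment just recorded; all other brackets shift or preserve the $\mathbb{Z}$-grading by $\alpha$ in the obvious way. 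Since $h_\alpha$ acts on $\gn_{j\alpha}^{j\bar\lambda}$ by $\langle j\alpha,h_\alpha\rangle=2j$ and on $kh_\alpha$ by $0$, every weight of $M$ is even and the zero weight space of $M$ is precisely $kh_\alpha$, of dimension one. A finite-dimensional $\sltwo$-module with only even weights is a sum of odd-dimensional irreducibles, with number of summands equal to the dimension of the zero weight space; hence $M$ is irreducible, namely the irreducible module of highest weight $2N$, where $N=\max\{j\ge1\mid\gn_{j\alpha}^{j\bar\lambda}\ne\{0\}\}\ge1$. All weight spaces of that module being one-dimensional, and the weight-$2j$ space of $M$ being exactly $\gn_{j\alpha}^{j\bar\lambda}$ for $1\le j\le N$, we get $\dim\gn_{j\alpha}^{j\bar\lambda}=1$ for $1\le j\le N$ and $\gn_{j\alpha}^{j\bar\lambda}=\{0\}$ for $j>N$; in particular $\dim\gn_\alpha^{\bar\lambda}=1$, which is (b). For (a): if $\gn_{2\alpha}^{2\bar\lambda}\ne\{0\}$ then $N\ge2$, so in $M$ the raising operator $\mathrm{ad}(\xal)$ is nonzero on the weight-$2$ space (in the irreducible of highest weight $2N>2$ the raising operator is injective on every weight space below the top); but that weight space is $\gn_\alpha^{\bar\lambda}=k\xal$, on which $\mathrm{ad}(\xal)$ vanishes since $[\xal,\xal]=0$, a contradiction. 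Hence $\gn_{2\alpha}^{2\bar\lambda}=\{0\}$.

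The only step that takes any care is verifying that $M$ is a submodule, and with the bracket identity of the second paragraph in hand this is immediate; the real content is simply the observation that forcing the zero weight space of $M$ to be one-dimensional makes $M$ irreducible, so I do not expect a genuine obstacle.
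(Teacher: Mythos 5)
Your proof is correct, but it is organized differently from the paper's. The paper first proves (a) directly: a nonzero $z\in\gn_{2\alpha}^{2\bar{\lambda}}$ is killed by $\mathrm{ad}(\xal)$ (here the paper invokes Proposition \ref{prop1}, i.e.\ that $\Delta$ is a finite root system, so that $3\alpha\notin\Delta$), hence $z$ generates a $5$-dimensional irreducible module over the triple; since $\big(\mathrm{ad}(\mxal)\big)^2(z)\in[\mxal,\gn_{\alpha}^{\bar{\lambda}}]\subseteq k h_{\alpha}$, this module is forced to contain the $3$-dimensional copy of $\sltwo$ itself, contradicting irreducibility; then (b) is deduced from (a) via the injectivity of $\mathrm{ad}(\mxal):\gn_{\alpha}^{\bar{\lambda}}\to k h_{\alpha}$. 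You instead assemble the whole string $\bigoplus_{j\ge 1}\gn_{j\alpha}^{j\bar{\lambda}}\oplus k h_{\alpha}\oplus\bigoplus_{j\ge 1}\gn_{-j\alpha}^{-j\bar{\lambda}}$ into a single finite-dimensional module, note that all weights are even and the zero weight space is the line $kh_{\alpha}$, conclude irreducibility, and read off (b) (in fact $\dim\gn_{j\alpha}^{j\bar{\lambda}}\le 1$ for all $j$) before (a), which you then get from $[\xal,\xal]=0$ against injectivity of the raising operator below the top weight. Both arguments rest on the same two ingredients, the containment $[\gn_{\alpha}^{\bar{\lambda}},\gn_{-\alpha}^{-\bar{\lambda}}]\subseteq k\nu^{-1}(\alpha)$ (which you correctly re-derive for arbitrary elements, as the paper implicitly does) and elementary $\sltwo$ representation theory; what your version buys is that it reverses the logical order (b)$\Rightarrow$(a), needs no appeal to Proposition \ref{prop1} or to $3\alpha\notin\Delta$, and yields the stronger statement that every $\gn_{j\alpha}^{j\bar{\lambda}}$ with $j\neq 0$ is at most one-dimensional; the paper's version is shorter and more local, working only with the submodule generated by the hypothetical vector $z$.
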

{\textbf{Proof.}}
  Let $\{ \xal, \mxal, h_{\alpha} \}$ be a $\mathfrak{sl}_2(k)$-triple 
  with respect to $(\alpha, \bar{\lambda})$, and we denote by $S_{\alpha}^{\bar{\lambda}}$ the subalgebra spanned 
  by these elements.
  Recall that $[\gn_{\alpha}^{\bar{\lambda}}, \gn_{-\alpha}^{-\bar{\lambda}}] \subseteq k h_{\alpha}$
  (cf. subsection \ref{subsection3}).\\
  (a) Suppose that $\gn_{2\alpha}^{\bar{2\lambda}} \neq \{ 0 \}$ and take $0 \neq z 
      \in \gn_{2\alpha}^{\bar{2\lambda}}$. 
      Note that $\Delta$ is a irreducible finite root system by Proposition \ref{prop1}.
      Since $\mathrm{ad}(\xal)(z)= \{ 0 \}$ and $z$ is an eigenvector for $\mathrm{ad}(h_{\alpha})$
      with eigenvalue 4,
      \[ V:= \sum_{0 \le i}\big(\mathrm{ad}(S_{\alpha}^{\bar{\lambda}})\big)^i(z)
      \]
      is a 5-dimensional irreducible $S_{\alpha}^{\bar{\lambda}}$-module.
      On the other hand, since 
      \[ \big(\mathrm{ad}(\mxal)\big)^2(z) \subseteq [\mxal, \gn_{\alpha}^{\bar{\lambda}}] \subseteq k h_{\alpha},
      \]
      $V$ contains $h_{\alpha}$.
      Then we have $S_{\alpha}^{\bar{\lambda}} \subseteq V$, and this contradicts the irreducibility of $V$.\\
  (b) For $w \in \gn_{\alpha}^{\bar{\lambda}}$, $[\xal, w]=0$ by (a).
      If $[\mxal,w]=0$, then we have 
      \[ [h_{\alpha},w]=[[\xal, \mxal], w]=0,
      \]
      and this implies $w=0$.
      Therefore, $\mathrm{ad}(\mxal)$ is an injective $k$-linear map from $\gn_{\alpha}^{\bar{\lambda}}$
      to 1-dimensional space $k h_{\alpha}$, thus (b) follows.
\qed
\\
 
The following lemma follows from \cite[Lemma 3.2.4]{MR2506428}:
\begin{lem} \label{lem of fin simple2}
  Let $\gn$ be a finite dimensional simple Lie algebra and $W$ is a finite dimensional $\gn$-module.
  We set $\Delta$ be a root system of $\gn$ relative to a Cartan subalgebra $\hz$.
  If the weights of $W$ relative to $\hz$ are contained 
  in $\Delta_{\mathrm{en}}\cup \{ 0 \}$ and $\dim W_{\alpha} \le 1$
  for $\alpha \in \Delta_{\mathrm{en}}$, then $W=U\oplus V$ where $\gn$ acts trivially on $U$ and either
  $V=\{ 0 \}$ or $V$ is irreducible of dimension $>1$.
\end{lem}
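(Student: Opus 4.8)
The plan is to combine Weyl's complete reducibility theorem with the classification of ``admissible'' irreducible $\gn$-modules that is packaged in \cite[Lemma 3.2.4]{ABFP2}. First I would use complete reducibility to write $W=\bigoplus_{j}W_j$ as a direct sum of irreducible $\gn$-submodules, and then set $U=\bigoplus\{W_j\mid \gn\text{ acts trivially on }W_j\}$ and $V=\bigoplus\{W_j\mid W_j\text{ is nontrivial}\}$. This gives at once $W=U\oplus V$ with $\gn$ acting trivially on $U$, so the remaining content of the lemma is the statement that $V$ is either $\{0\}$ or a single irreducible module, necessarily of dimension $>1$.

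To see the dimension claim, I would note that any $1$-dimensional $\gn$-module is trivial, since $\gn=[\gn,\gn]$ must act by zero through any linear character; hence every nontrivial summand $W_j$ has $\dim W_j>1$. The real point is then to rule out $V$ having two or more nontrivial irreducible summands. The mechanism is the hypothesis $\dim W_\alpha\le 1$ for $\alpha\in\Delta_{\mathrm{en}}$: since $0\notin\Delta_{\mathrm{en}}$, this constrains only nonzero weights, and it forbids two distinct summands of $V$ from sharing a common nonzero weight.

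The main step is to invoke \cite[Lemma 3.2.4]{ABFP2}, from which I would extract that a nontrivial irreducible finite-dimensional $\gn$-module all of whose weights lie in $\Delta_{\mathrm{en}}\cup\{0\}$, with every nonzero weight of multiplicity $\le 1$, belongs to a short explicit list: the adjoint module $\gn$ when $\Delta$ is not of type $B_l$, and, when $\Delta$ is of type $B_l$, the adjoint module together with the natural $(2l+1)$-dimensional module and the trace-free symmetric square of the latter; moreover any two modules on this list, and a fortiori two copies of a single one, have a nonzero weight in common. Given this, if $V$ had nontrivial summands $W_j$ and $W_{j'}$, they would share some nonzero weight $\alpha\in\Delta_{\mathrm{en}}$, whence $\dim W_\alpha\ge\dim(W_j)_\alpha+\dim(W_{j'})_\alpha\ge 2$, contradicting the hypothesis. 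Therefore $V$ is irreducible of dimension $>1$, or $V=\{0\}$.

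I expect the genuine obstacle to lie entirely in the use of \cite[Lemma 3.2.4]{ABFP2}: one must have the exact list of admissible irreducible modules and their weight supports and verify the pairwise overlap property, and the only delicate type is $B_l$, where $\Delta_{\mathrm{en}}$ is the non-reduced root system $BC_l$ and one has to keep track of the two extra small modules. In every other type, $\Delta_{\mathrm{en}}=\Delta$, the sole admissible nontrivial module is the adjoint module, and the overlap property is obvious.
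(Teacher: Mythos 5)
Your overall skeleton is sound and is, in fact, more of a proof than the paper supplies: the paper proves nothing here, it simply states the lemma as a consequence of \cite[Lemma 3.2.4]{ABFP2}. Your reduction via Weyl's theorem, the remark that a $1$-dimensional module is trivial because $\gn=[\gn,\gn]$, and the mechanism ``two nontrivial constituents would share a nonzero weight, contradicting $\dim W_\alpha\le 1$ on $\Delta_{\mathrm{en}}$'' are all correct. But two points in your key step need attention. First, the explicit list you assert is wrong outside type $B_l$: it is not true that in all other types the adjoint module is the only admissible nontrivial irreducible. In types $C_l$, $F_4$ and $G_2$ the ``little adjoint'' module $V(\theta_s)$ (highest weight the highest \emph{short} root: the $(2l^2-l-1)$-dimensional module for $C_l$, the $26$-dimensional module for $F_4$, the $7$-dimensional module for $G_2$) has all its nonzero weights equal to the short roots with multiplicity one, hence satisfies your admissibility condition with $\Delta_{\mathrm{en}}=\Delta$. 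Second, attributing this classification to \cite[Lemma 3.2.4]{ABFP2} is circular in context: that citation is exactly what the paper invokes for the lemma you are proving, and it is not a classification statement you can ``extract'' a list from.

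Neither issue is fatal, because the property you actually need --- any two nontrivial admissible irreducibles share a nonzero weight --- is robust and can be proved without any explicit list: a nontrivial irreducible whose weights lie in $\Delta_{\mathrm{en}}\cup\{0\}$ has highest weight a dominant element of $\Delta_{\mathrm{en}}$, i.e.\ $\theta$, $\theta_s$, or (in type $B$) $2\theta_s$; since $\theta_s$ is dominant and lies below each of these in the dominance order, $\theta_s$ is a weight of every such module, so every nontrivial admissible irreducible has all short roots of $\Delta$ among its weights. Hence two nontrivial summands would give some $\alpha\in\Delta$ with $\dim W_\alpha\ge 2$, the contradiction you wanted. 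With this repair (and dropping the faulty list), your argument is a complete and self-contained proof, which is a genuinely different route from the paper's bare citation of \cite[Lemma 3.2.4]{ABFP2}: the paper buys brevity by outsourcing the module theory, while your approach makes the lemma independent of that reference at the cost of the weight-theoretic argument above, where the only delicate point is precisely the non-simply-laced bookkeeping you partially misstated.
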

%
\begin{thm} \label{Main theorem}
  Let $\L=\Lmh$ be a multiloop Lie algebra of nullity $n$.
  Then $\L$ is support-isomorphic to some multiloop Lie $\mathbb{Z}^n$-torus 
  if and only if $\gsigma \neq \{ 0 \}$.
\end{thm}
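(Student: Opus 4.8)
The plan is to prove the two implications separately; the ``only if'' direction is a short degree argument, and for ``if'' I would twist $\bm{\sigma}$ into a tuple $\bm{\tau}\bm{\sigma}$ for which conditions (A0)--(A3) of Proposition~\ref{condition of multi-Lie} all hold, without leaving the support-isomorphism class of $\L$. For ``only if'', suppose $\L\suppisom\L'$ with $\L'$ a multiloop Lie $\mathbb{Z}^n$-torus. A support-isomorphism $\L\to\L'$ is in particular an algebra isomorphism carrying the degree-$(0,0)$ component of $\L$ onto that of $\L'$, since a group homomorphism fixes $0$. By (A1) the degree-$0$ subalgebra $(\L')^{0}$ is a finite-dimensional simple Lie algebra, so the degree-$(0,0)$ component of $\L'$, a nonzero Cartan subalgebra of $(\L')^{0}$, is nonzero; but if $\gsigma=\{0\}$ then the degree-$(0,0)$ component of $\L$ lies in $\L^{0}=\gsigma=\{0\}$. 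This contradiction forces $\gsigma\neq\{0\}$.

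For ``if'', assume $\gsigma\neq\{0\}$, fix a Cartan subalgebra $\hz$ of $\gsigma$, and put $\Delta=\mathrm{supp}_{Q_{\hz}}(\gn)\setminus\{0\}$, an irreducible finite root system by Corollary~\ref{cor1}. Let $\Delta_{\mathrm{ind}}$ be the associated reduced root system (equal to $\Delta$ if $\Delta$ is reduced, and of type $B_l$ if $\Delta$ has type $BC_l$), and fix a base $\Phi$ of $\Delta_{\mathrm{ind}}$; then $\Phi\subseteq\Delta$ and $\Phi$ is a $\mathbb{Z}$-basis of $Q_{\hz}$. For each $\alpha\in\Phi$ choose $\bar\lambda_\alpha\in\Lamm$ with $\gn_\alpha^{\bar\lambda_\alpha}\neq\{0\}$ and an $\sltwo$-triple with respect to $(\alpha,\bar\lambda_\alpha)$ as in Subsection~\ref{subsection3}. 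Let $\mu\colon Q_{\hz}\to\Lamm$ be the homomorphism sending $\alpha\in\Phi$ to $\bar\lambda_\alpha$, lift its components to homomorphisms $\tilde\mu_i\colon Q_{\hz}\to\mathbb{Z}$, put $s=(s_1,\dots,s_n)$ with $s_i=\tilde\mu_i/m_i\in\mathrm{Hom}(Q_{\hz},\mathbb{Q})$, and let $\bm{\tau}=(\tau_1,\dots,\tau_n)$ be obtained from $s$ as in~(\ref{68}). By Theorem~\ref{thm for supp-isom}, $\L\suppisom L_{\bm{m}}(\gn,\bm{\tau}\bm{\sigma},\hz)$, and the computation in the proof of Lemma~\ref{lem6} shows that $\gn^{\bm{\tau}\bm{\sigma}}$ is reductive with Cartan subalgebra $\hz$ and that $(\gn^{\bm{\tau}\bm{\sigma}})_\alpha=\gn_\alpha^{\mu(\alpha)}$ for every $\alpha\in Q_{\hz}$.

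The heart of the proof is to identify $\gn^{\bm{\tau}\bm{\sigma}}$ with the subalgebra $\mathfrak{k}$ generated by the chosen $\sltwo$-triples. Since each generator lies in a graded component forcing it into $\gn^{\bm{\tau}\bm{\sigma}}$, one has $\mathfrak{k}\subseteq\gn^{\bm{\tau}\bm{\sigma}}$; using Lemma~\ref{lem5}(a) for the Cartan integers and the finiteness of $\Delta$ to check the Serre relations, $\mathfrak{k}$ is the finite-dimensional simple Lie algebra with Cartan subalgebra $\hz$ and root system $\Delta_{\mathrm{ind}}$ relative to $\hz$. Now $\dim\gn_\alpha^{\bar\lambda}\leq1$ for $\alpha\in\Delta$ by Lemma~\ref{lem of fin dim simple1}(b), and, when $\Delta$ is non-reduced, $\gn_{2\beta}^{2\mu(\beta)}=\{0\}$ for every $\beta\in\Delta_{\mathrm{ind}}$ with $2\beta\in\Delta$, by Lemma~\ref{lem of fin dim simple1}(a) applied to $\gn_\beta^{\mu(\beta)}\supseteq\mathfrak{k}_\beta\neq\{0\}$; comparing graded components then gives $\mathfrak{k}_\alpha=(\gn^{\bm{\tau}\bm{\sigma}})_\alpha$ for all $\alpha$, hence $\gn^{\bm{\tau}\bm{\sigma}}=\mathfrak{k}$. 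This is (A1); (A2) follows from Lemma~\ref{lem of fin simple2}, because for $\bar{0}\neq\bar\lambda$ the $\mathfrak{k}$-module $\gn^{\bar\lambda}$ has $\hz$-weights in $\Delta\cup\{0\}\subseteq(\Delta_{\mathrm{ind}})_{\mathrm{en}}\cup\{0\}$ with all weight spaces over $(\Delta_{\mathrm{ind}})_{\mathrm{en}}$ of dimension $\leq1$. Finally, applying the $\mathrm{GL}_n(\mathbb{Z})$-action of Lemma~\ref{Z-span} to $\bm{\tau}\bm{\sigma}$ arranges (A3) without changing $\gn^{\bm{\tau}\bm{\sigma}}$ (Lemma~\ref{Z-support for Lie}), the support-isomorphism class (Theorem~\ref{thm for supp-isom}), or (A1)--(A2), and replacing $\bm{m}$ by the order of the resulting tuple gives (A0); Proposition~\ref{condition of multi-Lie} then exhibits the resulting multiloop Lie algebra as a multiloop Lie $\mathbb{Z}^n$-torus support-isomorphic to $\L$.

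The step I expect to be the main obstacle is the identification $\gn^{\bm{\tau}\bm{\sigma}}=\mathfrak{k}$: determining the isomorphism type of $\mathfrak{k}$ via the Serre relations, and, in the non-reduced case, using the vanishing $\gn_{2\beta}^{2\bar\lambda}=\{0\}$ from Lemma~\ref{lem of fin dim simple1}(a) to guarantee that the twisted fixed-point algebra has type $B_l$---so that $\Delta\subseteq(\Delta_{\mathrm{ind}})_{\mathrm{en}}$---rather than type $C_l$ or $BC_l$.
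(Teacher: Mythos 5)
Your proposal is correct and its overall skeleton is the same as the paper's: twist $\bm{\sigma}$ by diagonal automorphisms $\bm{\tau}$ attached to a homomorphism $s$ sending a base $\Phi$ of $\Delta$ into the $\mathbb{Z}^n$-support, verify (A1)--(A2) for $\gn^{\bm{\tau}\bm{\sigma}}$, repair (A3) by a $\mathrm{GL}_n(\mathbb{Z})$-move and (A0) by passing to $\mathrm{ord}$, and conclude with Theorem \ref{thm for supp-isom}; the ``only if'' direction via the degree-$(0,0)$ component is exactly the paper's argument. The one genuine divergence is the step you flag as the heart of the proof. You establish (A1) by generating a subalgebra $\mathfrak{k}$ from $\sltwo$-triples over $\Phi$, checking the Chevalley--Serre relations, and then matching graded components against $\gn^{\bm{\tau}\bm{\sigma}}$ using both parts of Lemma \ref{lem of fin dim simple1}. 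The paper instead observes that $\gn^{\bm{\tau}\bm{\sigma}}$ is automatically reductive with Cartan subalgebra $\hz$ (Lemmas \ref{lem1} and \ref{lem6}), so its root system is a reduced subsystem of $\Delta$ containing $\pm\Phi$, hence equals $\Delta_{\mathrm{ind}}$, and simplicity follows since $\Delta_{\mathrm{ind}}$ is irreducible and spans $\hz^*$. Both arguments are sound; the paper's is shorter because it outsources the structure theory to reductivity, while yours is more self-contained and has the side benefit that the multiplicity-one and $\gn_{2\beta}^{2\bar{\lambda}}=\{0\}$ facts needed for (A2) (and for the type-$B_l$ versus $C_l$/$BC_l$ issue you raise at the end) are produced explicitly along the way rather than quoted afterwards.
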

{\textbf{Proof.}} 
  First, we show the ``only if'' part.
  Suppose that $\L \suppisom \Lb$ for a multiloop Lie $\mathbb{Z}^n$-torus $\Lb$.
  Then $\hz = \L_0^0 \cong \Lb_0^0 \neq \{ 0 \}$.
  Thus, $\gsigma \neq \{ 0 \}$ follows.
  Next, we show the ``if'' part. 
  Suppose that $\gsigma \neq \{ 0 \}$. Let $\Delta = \mathrm{supp}_{\hz^*}(\L) \setminus \{ 0 \}$ and 
  $Q_{\hz} = \sum_{\alpha \in \Delta} \mathbb{Z} \alpha$.
  By Corollary \ref{cor1}, $\Delta$ is an irreducible finite root system in $\hz^*$.
  Take an arbitrary base $\Phi$ of $\Delta$ and choose $\lambda_{\alpha} \in \mathbb{Z}^n$ 
  for each $\alpha \in \Phi$ such that $\gn_{\alpha}^{\overline{\lambda_{\alpha}}_{(\bm{\sigma}, \bm{m})}} 
  \neq \{ 0 \}$.
  Since $\Phi$ is a $\mathbb{Z}$-basis of $Q_{\hz}$, 
  we can take $s=(s_1, \dots, s_n) \in \mathrm{Hom}(Q_{\hz}, \mathbb{Z}^n)$ such that 
  \[s(\alpha)=\lambda_{\alpha} \quad \mathrm{for} \ \alpha \in \Phi.
  \]
  We define $\tau_i \in \mathrm{Aut}(\gn)$ for $1 \le i \le n$ as
  \[ \tau_i(x_{\alpha})=\zeta_{m_i}^{-s_i(\alpha)}x_{\alpha}
  \]
  for $\alpha \in Q_{\hz}, x_{\alpha} \in \gn_{\alpha}$. 
  Let $\bm{\tau} = (\tau_1, \dots, \tau_n)$ and $\bm{\tilde{\sigma}}=\bm{\tau}\bm{\sigma}$.
  Then $\L^{(s)}$ is $Q_{\hz} \times \mathbb{Z}^n$-graded-isomorphic to 
  $L_{\bm{m}}(\gn, \bm{\tilde{\sigma}}, \hz)$ by Lemma \ref{lem6}.
  Then
  \[ \gn^{\bm{\tilde{\sigma}}}_{\alpha} \cong (\L^{(s)})_{\alpha}^0= \L_{\alpha}^{s(\alpha)} 
     \cong \gn_{\alpha}^{\overline{s(\alpha)}_{(\bm{\sigma}, \bm{m})}}
  \]
  for $\alpha \in \Delta$.
  Thus, we have that 
  \[ \pm \Phi \subseteq \mathrm{supp}_{Q_{\hz}}(\gn^{\bm{\tilde{\sigma}}}) \setminus \{ 0 \} \subseteq \Delta
  \]
  by the construction of $s$.
  Recall that $\gn^{\bm{\tilde{\sigma}}}$ is reductive.
  From the above we see that $\mathrm{supp}_{Q_{\hz}} (\gn^{\bm{\tilde{\sigma}}}) \setminus \{ 0 \}$ spans $\hz^*$.
  This means that $\gn^{\bm{\tilde{\sigma}}}$ has no center, 
  and we see that $\gn^{\bm{\tilde{\sigma}}}$ is a simple Lie algebra with the root system $\Delta_{\mathrm{ind}}$.
  Using \cite[Proposition 5.1.3]{MR2506428}, we can take $P \in \mathrm{GL}_n(\mathbb{Z})$ such that
  \[ | \langle \{ \tilde{\sigma}_1, \dots, \tilde{\sigma}_n \} \rangle |
     = \prod_{1 \le i \le n} \mathrm{ord} \big((\bm{\tilde{\sigma}}^P)_i\big)
  \]
  where we set $\bm{\tilde{\sigma}}^P=\big((\bm{\tilde{\sigma}}^P)_1, \dots, (\bm{\tilde{\sigma}}^P)_n\big)$.
  We prove that $\L':=L_{\mathrm{ord}(\bm{\tilde{\sigma}}^P)}(\gn, \bm{\tilde{\sigma}}^P, \hz)$ 
  is a multiloop Lie $\mathbb{Z}^n$-torus, that is, $\L'$ satisfies condition (A0)-(A3) 
  in Proposition \ref{condition of multi-Lie}.
  (A0) and (A3) are trivial and (A1) has been already shown.
  Since the weights of $\gn$ relative to $\hz$ are contained in $\Delta \cup \{0 \} \subseteq
  (\Delta_{\mathrm{ind}})_{\mathrm{en}} \cup \{ 0 \}$ 
  and $\dim \gn_{\alpha}^{\bar{\lambda}_{(\bm{\tilde{\sigma}}, \bm{m})}} \le 1$  
  for $\alpha \in \Delta, \bar{\lambda} \in \Lamm$ by Lemma \ref{lem of fin dim simple1} (b),
  (A2) follows from Lemma \ref{lem of fin simple2}.
  Thus, $\L'$ is a multiloop Lie $\mathbb{Z}^n$-torus.
  Finally, $\L \suppisom \L'$ follows from Theorem \ref{thm for supp-isom}.
\qed
\subsection{Extended affine Lie algebras}
In this subsection, we consider the construction of an extended affine Lie algebra (EALA, for short)
from a multiloop Lie algebra.

First, we recall the definition of an EALA.
(The following version of the definition is introduced in \cite{MR2083842}).
\begin{dfn} \label{EALA} \normalfont
  An \textit{extended affine Lie algebra} over $k$ of nullity $n$ is a triple $(E, H, ( \ | \ ))$,
  where $E$ is a Lie algebra over $k$, $H$ is a subalgebra of $E$, and $( \ | \ )$ is a bilinear form on $E$, 
  satisfying the following conditions 
  (EA1)-(EA6): 
  \begin{enumerate}
    \item[(EA1)] $( \ | \ )$ is a non-degenerate invariant symmetric bilinear form. 
    \item[(EA2)] $H$ is a nontrivial finite-dimensional self-centralizing and ad-diagonalizable 
                 subalgebra of $E$. 
  \end{enumerate}
  Let $R = \mathrm{supp}_{H^*}(E)$ where we consider a root space decomposition of $E$ with respect to $H$.
  We define $( \ | \ )$ on $H^*$ in a similar way as (\ref{43}) 
  and let $R^0= \{ \alpha \in R \mid (\alpha|\alpha)=0 \}$.  
  \begin{enumerate} 
    \item[(EA3)] For $\alpha \in R \setminus R^0$ and $x_\alpha \in E_{\alpha}$, 
                 $\mathrm{ad}(x_\alpha)$ is locally nilpotent. 
    \item[(EA4)] $R \setminus R^0$ is irreducible. 
    \item[(EA5)] If $E_c$ is the subalgebra in $E$ generated by 
                 $\{ E_{\alpha} \mid \alpha \in R \setminus R^0 \}$, 
                 then $\{ e \in E \mid [e, E_c] = 0 \} \subseteq E_c$.
    \item[(EA6)] $\langle R^0 \rangle \subseteq H^*$ is a free abelian group of rank $n$. 
  \end{enumerate}
  If $(E, H, ( \ | \ ))$ is an EALA, we also say that $E$ is an EALA for short.
\end{dfn}

The following definition is introduced in \cite{MR2743759}:
\begin{dfn} \label{isom of EALA} \normalfont
  Suppose that $(E, H, ( \ | \ ))$ and $(E', H', ( \ | \ )')$ are EALAs.
  We say $(E, H, ( \ | \ ))$ and $(E', H', ( \ | \ )')$ are \textit{isomorphic} 
  if there exists a Lie algebra isomorphism $\chi :E \to E'$ such that
  \[ \chi(H)=H' \ \mathrm{and} \ (\chi(x)|\chi(y))'=a(x|y) \ \mathrm{for \ some} \ a \in k.
  \]
  Alternatively, in that case we say $E$ and $E'$ are \textit{isomorphic as EALAs}.
\end{dfn}

In \cite{MR2083842}, E.\ Neher introduced a construction of a family of EALAs from a Lie $\Lambda$-torus
where $\Lambda$ is a free abelian group of finite rank, 
and he announced that any EALA is constructed in this way.
Observing that construction, we can see that it can be applied to some Lie algebras that are not Lie tori.
Indeed, we show in Proposition \ref{construction of an EALA} that 
if a Lie algebra $\L$ with subalgebra $\hz$ and a bilinear form $( \ | \ )$ satisfies 
the following conditions (L1)-(L4),
we can construct an EALA from $\L, \mathfrak{h}$, and $( \ | \ )$ using Neher's construction: 
\begin{enumerate}
  \item[(L1)] $\L=\oplus_{\lambda \in \Lambda} \L^{\lambda}$ is a graded-central-simple $\Lambda$-graded Lie
             algebra where $\Lambda$ is a free abelian group of finite rank $n$.
  \item[(L2)] Let $\Gamma \subseteq \Lambda$ be a central grading group of $\L$ (cf. Definition \ref{def of graded-simple} (b)). 
              Then the rank of $\Gamma$ is $n$.
  \item[(L3)] $(\ |\ )$ is a non-degenerate invariant symmetric $\Lambda$-graded bilinear form . 
             ($(\ |\ )$ is $\Lambda$-graded means that $(x|y)=0$ for $x \in \L^{\lambda}, y \in \L^{\mu}$
             if $\lambda+\mu \neq 0$).
  \item[(L4)] $0 \neq \hz \subseteq \L^0$, 
             $\hz$ is abelian, ad-diagonalizable on $\L$ and self-centralizing in $\L^0$. 
             Also we assume that $\Delta:=\mathrm{supp}_{\hz^*}(\L) \setminus \{ 0 \}$
             is an irreducible finite root system in $\hz^*$ where we consider a root space decomposition of $\L$ 
             with respect to $\hz$. 
\end{enumerate}

First, we roughly describe this construction for later use.
(For a more precise description, see \cite{MR2083842} or \cite{MR2743759}).
\begin{con} \label{con} \normalfont
  Suppose that $\L$, $\hz$ and $( \ | \ )$ satisfy (L1)-(L4).
  Let $Q_{\hz} = \sum_{\alpha \in \Delta} \mathbb{Z} \alpha$.
  As multiloop Lie algebras, we consider $\L$ as a $Q_{\hz} \times \Lambda$-graded Lie algebra, 
  and write $\L = \bigoplus_{(\alpha, \lambda) \in Q_{\hz} \times \Lambda}\L_{\alpha}^{\lambda}$.
  Let $C(\L)$ be a centroid of $\L$.
  By (L1) and \cite[Lemma 4.3.5 and 4.3.8]{MR2418198}, $C(\L) \cong k[\Gamma]$ as a $\Gamma$-graded algebra
  where $k[\Gamma]$ is a group algebra of $\Gamma$ over $k$.
  Using this isomorphism, we write $C(\L)=\oplus_{\mu \in \Gamma} \, kt^{\mu}$
  where $t^{\mu_1} \cdot t^{\mu_2}= t^{\mu_1+ \mu_2}$ for $\mu_1, \mu_2 \in \Gamma$. 
  For $\theta \in \mathrm{Hom}(\Lambda, k)$, we define a degree derivation $\partial_{\theta}$ of $\L$ by
  \[ \partial_{\theta}(x^{\lambda})=\theta (\lambda)x^{\lambda}
  \]
  for $\lambda \in \Lambda, x^{\lambda} \in \L^{\lambda}$.
  We put 
  \[ \mathrm{CDer}(\L) = C(\L) \cdot \{ \partial_{\theta} \mid \theta \in \text{Hom}(\Lambda, k) \},
  \]
  and
  \[ \mathrm{SCDer}(\L)= \bigoplus_{\mu \in \Gamma} \{ t^{\mu}\cdot 
     \partial_{\theta}\mid \theta(\mu)=0 \}.
  \]
  Then $\mathrm{CDer}(\L)$ is a Lie subalgebra of $\mathrm{Der}(\L)$, and $\mathrm{SCDer}(\L)$ is a Lie subalgebra of $\mathrm{CDer}(\L)$.
  Note that, if $d \in \mathrm{SCDer}(\L)$, we have
  \begin{equation} \label{equ: skew-invariant}
    (d(x)|y) = -(x|d(y))
  \end{equation}
  for $x , y \in \L$ since $( \ | \ )$ is $\Lambda$-graded.
  To construct an EALA from $\L$, we need the following two ingredients:
  \begin{enumerate}
  \item[(i)] Let $\D=\oplus_{\mu \in \Gamma}\D^{\mu}$ be a $\Gamma$-graded subalgebra of $\mathrm{SCDer}(\L)$
             such that the evaluation map $\mathrm{ev}:\Lambda \to (\D^0)^*$ defined by
             $\mathrm{ev}(\lambda)(\partial_{\theta})=\theta(\lambda)$ is injective. 
             Let $\C=\oplus_{\mu \in \Gamma}(\D^{\mu})^*$ 
             and consider $\C$ as a $\D$-module by a contragredient action. 
             We give $\C$ a $\Gamma$-grading by $\C^{\mu}=(\D^{-\mu})^*$. 
  \item[(ii)] Let $\tau: \D \times \D \to \C$ be a 2-cocycle which is graded and invariant, i.e. 
              \[ \tau(\D^{\mu_1}, \D^{\mu_2}) \subseteq \C^{\mu_1+\mu_2}, \quad
                 \tau(d_1, d_2)(d_3)=\tau(d_2, d_3)(d_1) \quad \mathrm{for} \ d_i \in \D,
              \] 
              and we assume that $\tau(\D, \D^0)=0$.
  \end{enumerate}
  Then $E(\L, \D, \tau):= \L \oplus \C \oplus \D$ is a Lie algebra with respect to the product
  \[ \begin{split}
       [x_1 + c_1 + d_1, x_2 + c_2 + d_2]& = \big([x_1, x_2]_{\L} + d_1(x_2) - d_2(x_1)\big) \\
       & +\big(\sigma_{\D} (x_1,x_2) + d_1 \cdot c_2 - d_2 \cdot c_1 + \tau(d_1, d_2)\big) \\
       & +[d_1, d_2]
     \end{split}
  \] 
  for $x_i \in \L, c_i \in \C, d_i \in \D$ 
  where $[ \ , \ ]_{\L}$ denote the product in $\L$, and $\sigma_{\D}: \L \times \L \to \C$ is defined by $\sigma_{\D}(x, y)(d)=(d(x)|y)$
  for $x, y \in \L, d \in \D$. 
  We can define a bilinear form $( \ | \ )$ on $E(\L, \D, \tau)$ by
  \[ (x_1 + c_1 + d_1| x_2 + c_2 + d_2) = (x_1|x_2) + c_1(d_2) + c_2(d_1).
  \]                                     
\end{con}
Then we have the following proposition:
\begin{prop} \label{construction of an EALA}
  If $\L$, $\hz$ and $( \ | \ )$ satisfy the condition \textup{(L1)}-\textup{(L4)}, 
  then $E = E(\L, \D, \tau)$ constructed in Construction \ref{con} is an EALA of nullity $n$
  with respect to the form $( \ | \ )$ and the subalgebra $H = \hz \oplus \C^0 \oplus \D^0$.
\end{prop}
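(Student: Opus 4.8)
The plan is to verify the axioms (EA1)--(EA6) for $\bigl(E(\L,\D,\tau),\,H,\,(\ |\ )\bigr)$ one by one, following Neher's argument in \cite{N}: he establishes these axioms for Lie $\Lambda$-tori using the torus structure only through a handful of structural features, so the task is to check that each step survives when those features are supplied instead by (L1)--(L4). First I would record the elementary facts about $E=\L\oplus\C\oplus\D$: that $\C$ is an abelian ideal with $[\L,\C]=0$; that, since $\L$ is graded-central-simple, $C(\L)^0=k\cdot\mathrm{id}$ (Definition \ref{def of graded-simple}), so $\D^0$ consists of degree derivations, which act trivially on $\hz\subseteq\L^0$ and on $\C^0$, whence $H=\hz\oplus\C^0\oplus\D^0$ is abelian and finite-dimensional; and that $E$ is the direct sum of its $H$-weight spaces, with $\L^\mu_\alpha$ of weight $\alpha+\mathrm{ev}(\mu)$ and $\C^\mu,\D^\mu$ of weight $\mathrm{ev}(\mu)$, where $\mathrm{ev}\colon\Lambda\to(\D^0)^*$ is the injective evaluation map of Construction \ref{con}. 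This already yields the ad-diagonalizability in (EA2), and it identifies $R=\mathrm{supp}_{H^*}(E)$ together with $R^0$ as the set of null weights, all of the form $\mathrm{ev}(\mu)$.

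The axioms that then go through essentially as in \cite{N} are (EA1), (EA3), (EA4) and (EA6). For (EA1), symmetry and invariance of the extended form are the same formal $2$-cocycle identities, and non-degeneracy on $E$ follows from non-degeneracy of $(\ |\ )$ on $\L$ (hypothesis (L3)) together with the perfect pairing $\D^\mu\times\C^{-\mu}=\D^\mu\times(\D^\mu)^*$, which is legitimate because each $\D^\mu$ is finite-dimensional ($C(\L)^\mu$ is at most one-dimensional and $\{\partial_\theta\mid\theta(\mu)=0\}$ is finite-dimensional, as $\Lambda\cong\mathbb{Z}^n$). For (EA3): given $\alpha\in R\setminus R^0$ and $x_\alpha\in E_\alpha$, the $\L$-component of $x_\alpha$ lies in some $\L^\mu_\beta$ with $\beta\in\Delta$; since $\Delta$ is a finite root system one attaches an $\mathfrak{sl}_2(k)$-triple through $\beta$ exactly as in Subsection \ref{subsection3} (Lemma \ref{lem5}), so $\mathrm{ad}_\L$ is locally nilpotent there, and one then checks that the $\C$- and $\D$-components of $x_\alpha$ only shift $\Lambda$-degrees and land in $\C$, keeping $\mathrm{ad}_E(x_\alpha)$ locally nilpotent. (EA4) is immediate: restriction to $\hz$ carries $R\setminus R^0$ onto $\Delta$, which is irreducible by (L4) (Corollary \ref{cor1} in the multiloop setting), and an orthogonal splitting of $R\setminus R^0$ would descend to one of $\Delta$. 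For (EA6), $\langle R^0\rangle=\mathrm{ev}\bigl(\langle\mathrm{supp}_\Lambda(\L)\rangle\bigr)$ is finitely generated and torsion-free, and it has rank $n$ because $\Gamma=\Gamma_\Lambda(\L)\subseteq\langle\mathrm{supp}_\Lambda(\L)\rangle\subseteq\Lambda$ forces all three to have rank $n$ by (L1)--(L2), while $\mathrm{ev}$ is injective.

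The real work is in (EA2) (self-centralizing) and (EA5) (the core condition), which are also where Neher's proof leans hardest on the torus axioms. For (EA2) one computes the centralizer of $H$ in $E$: an element of zero $H$-weight lies in $\L^0_0\oplus\C^0\oplus\D^0$, so the point reduces to showing $\L^0_0=\hz$, i.e. that the zero-weight space of $\hz$ acting on $\L^0$ is $\hz$ itself; here one uses that the $k$-span of $\Delta$ is all of $\hz^*$ (Lemma \ref{lem4}), so $\hz$ behaves like the Cartan subalgebra of a reductive Lie algebra, together with graded-central-simplicity to exclude anything extra. For (EA5) one must first identify the core $E_c$, the subalgebra generated by the anisotropic root spaces; since $\D$ supports no anisotropic weights and brackets of $\L$-elements stay in $\L\oplus\C$, one expects $E_c=\L\oplus\C$, which requires checking that $\L$ is generated by its anisotropic part (using graded-central-simplicity and the irreducible finite root grading) and that the values of $\sigma_{\D}$ span $\C$ (using that every nonzero homogeneous element of $\D\subseteq\mathrm{SCDer}(\L)$ acts nontrivially on $\L$, so the form detects it). One must then prove $\{e\in E\mid[e,E_c]=0\}\subseteq E_c$: writing $e=x+c+d$ and analysing $[e,\L]=0=[e,\C]$ forces $x\in\L^0$ (because the injectivity of $\mathrm{ev}$ makes $\D^0$ separate the $\Lambda$-degrees) and $d=-\mathrm{ad}_\L(x)$, and the residual step --- showing such a $d$ cannot leave $E_c$ --- is precisely where $C(\L)^0=k\cdot\mathrm{id}$ and the injectivity of $\mathrm{ev}$ have to take over the role of the Lie-torus hypotheses in \cite{N}. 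I expect (EA5), and within it this last centralizer step, to be the main obstacle.
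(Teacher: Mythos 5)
The decisive gap is in your identification $E_c=\L\oplus\C$, more precisely in the inclusion $\C\subseteq E_c$, and it sits exactly where the paper's proof does all of its work. Generating $\L$ by the anisotropic root spaces only gives, once the brackets are computed inside $E$, that every element of $\L$ lies in $E_c$ up to an uncontrolled element of $\C$, i.e. $E_c+\C=\L\oplus\C$; and the fact that the values of $\sigma_{\D}$ span $\C$ does not upgrade this to $\C\subseteq E_c$, because each value $\sigma_{\D}(x,y)$ is produced inside $E_c$ only together with its companion $[x,y]_{\L}$. What you need is a supply of linear combinations of brackets whose $\L$-components cancel while the $\C$-components still span $\C$; a spanning statement alone cannot deliver this (a map of coboundary shape $(x,y)\mapsto f([x,y]_{\L})$ can have values spanning $\C$ while contributing nothing to $E_c\cap\C$, and your separation argument uses no property of $\sigma_{\D}$ that rules such behaviour out). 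The paper's proof of this proposition consists precisely of constructing the cancelling combinations: by (L1) one has $C(\L)\cong k[\Gamma]$, written $C(\L)=\oplus_{\mu}kt^{\mu}$; choosing a $\mathbb{Z}$-basis $\nu_1,\dots,\nu_n$ of $\Gamma$ (this is where (L2) enters) and any $x,y$ with $(x|y)=1$, and using that centroid elements commute with the bracket and are symmetric for the form, one finds that $[t^{\nu_i}\cdot x,\,t^{-\nu_i-\mu}\cdot y]-[x,\,t^{-\mu}\cdot y]$ has vanishing $\L$-part, equals $\eta(c_i^{\mu})$, lies in $[\L,\L]\subseteq E_c$, and that these elements span $\C$. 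Without this (or an equivalent cancellation device) your treatment of (EA5) does not close.

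Two further remarks. First, your placement of the difficulty is inverted relative to the paper: the centralizer statement $\{e\in E\mid[e,E_c]=0\}=\C$, which you single out as the main obstacle, is taken by the paper in a single line, while the step you treat as already settled, namely $\C\subseteq E_c$, is the entire content of its proof (the paper proves only (EA5), regarding (EA1)--(EA4) and (EA6) as routine). Second, your argument for (EA2) is not valid as stated: $C_{\L^0}(\hz)=\hz$ does not follow from the $k$-span of $\Delta$ being $\hz^*$ together with graded-central-simplicity. For example $\L=\mathfrak{sl}_3(k)\otimes k[t^{\pm1}]$ with $\hz=k\,\mathrm{diag}(1,0,-1)$ satisfies (L1)--(L4) literally (here $\Delta$ is of type $BC_1$), yet the centralizer of $\hz$ in $\L^0=\mathfrak{sl}_3(k)$ is the full diagonal Cartan subalgebra. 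So for (EA2) you must use the self-centralizing property of $\hz$ in $\L^0$ directly --- in the paper's intended application it comes from $\hz$ being a Cartan subalgebra of the reductive algebra $\L^0=\gsigma$ --- rather than derive it from (L1)--(L4).
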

{\textbf{Proof.}}
  We check that $E$, $H$ and $( \ | \ )$ satisfy (EA1)-(EA6).
  $( \ | \ )$ is trivially non-degenerate and symmetric, and using (\ref{equ: skew-invariant}) we can show directly 
  that $( \ | \ )$ is invariant. Hence (EA1) follows.
  Suppose that $x \in E$ satisfies $[x, H]=0$. Then in particular, $[x, \D^0] = 0$. 
  For $\partial_{\theta} \in \D^0, y \in E^{\lambda}, \lambda \in \Lambda$, we have from definition that 
  \[ [\partial_{\theta}, y] =\theta(\lambda)y = \mathrm{ev}(\lambda)(\partial_{\theta})y.
  \]
  Thus, since the evaluation map $\mathrm{ev}: \Lambda \to (\D^0)^*$ is injective, we have $x \in E^0$.
  Then we have from $[x, \mathfrak{h}] = 0$ that $x \in H$. Hence, $H$ is self-centralizing. 
  The rest of (EA2) is trivial.
  Now we describe $R = \mathrm{supp}_{H^*}(E)$ and $R^0 = \{ \alpha \in R \mid (\alpha | \alpha)=0 \}$.
  We consider $\mathfrak{h}^*$ as a subspace of $H^*$ by setting $\langle \alpha, \C^0 \oplus \D^0 \rangle = 0$ for $\alpha \in \mathfrak{h}$,
  and similarly we consider $(\C^0)^*$ and $(\D^0)^*$ as subspaces of $H^*$.
  Then we see that $R \subseteq \mathfrak{h}^* \oplus (\D^0)^*$ since $\C^0$ is central in $E$.
  Moreover, if we view $\Lambda$ as a subgroup of $(\D^0)^*$ using the evaluation map, we have
  \begin{equation} 
    R \subseteq (\Delta \cup \{ 0 \}) \oplus \Lambda \subseteq \mathfrak{h}^* \oplus (\D^0)^*.
  \end{equation}
  For $\alpha + \lambda, \beta + \mu \in R$ where $\alpha, \beta \in \Delta \cup \{ 0 \}$, $\lambda, \mu \in \Lambda$, it is easily checked that
  \[ (\alpha + \lambda | \beta + \mu) = (\alpha | \beta).
  \]
  Hence, we have that $R^0 = R \cap \Lambda$ and $R \setminus R^0 = \{ \alpha +\lambda \in R \mid \alpha \in \Delta, \lambda \in \Lambda \}$.
  Now (EA3) and (EA4) obviously follow since $\Delta$ is an irreducible finite root system.
  Since $\mathfrak{h}=\L^0_0 \neq 0$ and $C(\L) \cong k(\Gamma)$, we have $\Gamma \subseteq \mathrm{supp}_{\Lambda}(\L_0)$.
  Thus, $\Gamma \subseteq \langle R^0 \rangle \subseteq \Lambda$.
  Since $\Gamma$ and $\Lambda$ are both the free abelian groups of rank $n$,
  (EA6) follows.
  Finally, we show (EA5).
  We show first that $\C \subseteq E_c$.
  Take arbitrary $\mu \in \Gamma$.
  For $\lambda \in \Lambda$, we define $c_{\lambda}^{(\mu)} \in \C^{\mu}$ as 
  \begin{equation}
    \begin{cases}
      c_{\lambda}^{(\mu)} (t^{-\mu} \cdot \partial_{\theta}) = \lambda (\theta), \\
      c_{\lambda}^{(\mu)} (\D^{\nu}) = 0 \ \ \mathrm{if} \ \nu \neq -\mu.
    \end{cases} 
  \end{equation} 
  For arbitrary $\alpha \in \Delta$, we take $x \in \L_{\alpha}$, $y \in \L_{-\alpha}$ such that $(x|y)=1$.
  Then we can easily checked for $\lambda \in \Gamma$ that 
  \[ [ t^{\lambda} \cdot x, t^{-\lambda + \mu } \cdot y] - [x, t^{\mu} \cdot y] = c_{\lambda}^{(\mu)} \in E_c.
  \]
  Since $\Gamma$ is rank $n$, $\mathrm{span}_k \{ c_{\lambda}^{(\mu)} \mid \lambda \in \Gamma \} = \C^{\mu}$,
  and hence we have $\C^{\mu} \subseteq E_c$.
  Since $\mu$ is arbitrary, $\C \subseteq E_c$.
  Next, we show that $E_c = \L \oplus \C$.
  $E_c \subseteq \L \oplus \C$ is clear from the definition of the product of $E$.
  Since
  \[ [x, y] \equiv [x, y]_{\L} \ \ \mathrm{mod} \ \C
  \]
  for $x, y \in \L$, we see that $E_c / \C$ is isomorphic to some $\Lambda$-graded ideal in $\L$.
  Since $\L$ is graded-simple, we have $E_c/\C \cong \L$, that is $E_c = \L \oplus \C$.
  Now we show (EA5). 
  Suppose $e \in E$ satisfies $[e, E_c]=0$, and we write $e= l + c + d$ where $l \in \L, c \in \C, d \in \D$.
  We can assume $e \in E^{\lambda}$ for some $\lambda \in \Lambda$.
  Since $[e , \mathfrak{h}]=0$, $e \in E_0^{\lambda}$.
  We can write $d = t^{\lambda} \cdot \partial_{\theta}$ for some $\theta \in \mathrm{Hom}_k (\Lambda, k)$.
  Take $0 \neq y \in \L_{\beta}^{\mu}$ for some $\beta \in \Delta, \mu \in \Lambda$.
  We have 
  \[ 0 = [e, y] = [l, y] +\theta (\mu) t^{\lambda} \cdot y,
  \]
  that is, $[l,y] = - \theta (\mu) t^{\lambda} \cdot y$.
  For $\nu \in \Gamma$, we have
  \[ 0 = [e, t^{\nu} \cdot y] = t^{\nu} \cdot [l, y] + (\theta (\mu)+ \theta (\nu)) t^{\lambda+ \mu} \cdot y 
       = \theta(\nu) t^{\lambda + \nu} \cdot y.
  \]
  Since $\Gamma$ is rank $n$, we see that $\theta = 0$, that is $d = 0$.
  Thus, $e \in \L \oplus \C = E_c$.
\qed
\\

For $(\L, \hz, ( \ | \ ))$ satisfying the conditions (L1)-(L4), we set
\[ \mathcal{P}(\L)=\{ (\D, \tau) \mid \D, \tau \ \mathrm{are \ as \ in \ (i), \ (ii) \ in \ Construction \
   \ref{con}} \}.
\]
Note that $\mathcal{P}(\L)$ does not depend on $\hz$ or $( \ | \ )$. 

We use the following notation: suppose that $(\L, \hz, ( \ | \ ))$ and $(\L', \hz', ( \ | \ )')$ 
satisfy the conditions (L1)-(L4).
Then we will write 
\[ (\L, \hz, ( \ | \ )) \sim_{\mathrm{EALA}} (\L', \hz', ( \ | \ )') \ (\mathrm{or} \ \L \sim_{\mathrm{EALA}} \L'
   \ \mathrm{for \ short})
\]
if there exists a bijection $\mathcal{P}(\L) \to \mathcal{P}(\L')$
such that $E(\L,\D,\tau)$ is isomorphic as EALAs to $E(\L',\D', \tau')$
where $(\D', \tau') \in \mathcal{P}(\L')$ 
is the image of $(\D, \tau) \in \mathcal{P}(\L)$ under the bijection.
In other words, $\L \sim_{\mathrm{EALA}} \L'$ means that $\{ E(\L, \D, \tau) | (\D, \tau) \in \mathcal{P}(\L) \}$ and 
$\{ E( \L', \D', \tau') | (\D', \tau') \in \mathcal{P}(\L') \}$ coincide up to isomorphism of EALAs. 

Using the above notation, we have the following:
\begin{lem} \label{lem of EALA}
  Suppose that $(\L, \hz, ( \ | \ ))$ satisfies the conditions \textup{(L1)}-\textup{(L4)} and
  we set $Q_{\hz}=\sum_{\alpha \in \Delta} \mathbb{Z}\alpha$.
  \textup{(a)} Let $s \in \mathrm{Hom}(Q_{\hz}, \Lambda)$.
               For a suitable bilinear form $( \ | \ )^{(s)}$,
               $\left(\L^{(s)}, \hz, ( \ | \ )^{(s)}\right)$ also satisfies the conditions \textup{(L1)}-\textup{(L4)},
               and $\L \sim_{\mathrm{EALA}} \L^{(s)}$. 
  \textup{(b)} Let $\rho: \lr{\Lambda}{\L} \to \Lambda$ be an injective homomorphism.
               For a suitable bilinear form $( \ | \ )_{(\rho)}$ on $\L_{(\rho)},$ 
               $\left(\L_{(\rho)}, \hz, ( \ | \ )_{(\rho)}\right)$ also 
               satisfies the conditions \textup{(L1)}-\textup{(L4)},
               and $\L \sim_{\mathrm{EALA}} \L_{(\rho)}$.  
\end{lem}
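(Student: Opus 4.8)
The plan is to handle (a) and (b) together, since in both cases $\L'$ (by which I mean $\L^{(s)}$ or $\L_{(\rho)}$) is obtained from $\L$ simply by relabelling the $\Lambda$-grading, the underlying Lie algebra and its $Q_{\hz}$-grading — hence $\hz$, its ad-diagonalizability, and $\Delta$ — being left untouched. Accordingly I would take the new forms to be the old one: $(\ |\ )^{(s)} := (\ |\ )$ and $(\ |\ )_{(\rho)} := (\ |\ )$. The proof then splits into two parts: (I) verifying \textup{(L1)}--\textup{(L4)} for the new triples, and (II) producing a bijection $\mathcal{P}(\L)\to\mathcal{P}(\L')$ with $E(\L,\D,\tau)$ isomorphic to $E(\L',\D',\tau')$ as EALAs.

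For Step (I), the workhorse is the observation that every $\mathrm{ad}(\hz)$-stable subspace of $\L$ decomposes along the $Q_{\hz}$-eigenspace grading. Since $\hz\subseteq\L^{0}_{0}$ sits in $\Lambda$-degree $0$ for both $\L$ and $\L'$, and since centroid elements commute with $\mathrm{ad}(\hz)$, it follows that every $\Lambda$-graded ideal of $\L'$ is in fact a $Q_{\hz}\times\Lambda$-graded ideal of $\L$, and every $\Lambda$-homogeneous element of $C(\L')$ is $Q_{\hz}\times\Lambda$-homogeneous for $\L$. As every $Q_{\hz}\times\Lambda$-graded ideal is automatically $\Lambda$-graded, \textup{(L1)} for $\L$ shows $\L'$ has no proper nonzero $\Lambda$-graded ideals, and $C(\L')^{0}\subseteq C(\L)^{0}=k\cdot\mathrm{id}$; hence $\L'$ is graded-central-simple — \textup{(L1)}. \textup{(L4)} is immediate, and \textup{(L3)} holds because invariance of $(\ |\ )$ together with ad-diagonalizability of $\hz$ already forces $(\ |\ )$ to be $Q_{\hz}\times\Lambda$-graded, hence $\Lambda$-graded for the relabelled grading (for $\L^{(s)}$ one uses $s(\alpha)+s(\beta)=0$ whenever $\alpha+\beta=0$; for $\L_{(\rho)}$, injectivity of $\rho$). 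For \textup{(L2)} one checks that $\Gamma_{\Lambda}(\L_{(\rho)})=\rho\bigl(\Gamma_{\Lambda}(\L)\bigr)$, of rank $n$ as $\rho$ is a monomorphism (analogously to Remark \ref{rem3}); and that $\Gamma_{\Lambda}(\L^{(s)})$ is the image of $\Gamma_{Q_{\hz}\times\Lambda}(\L)$ under the homomorphism $(\psi,\mu)\mapsto \mu - s(\psi)$, which is injective on $\Gamma_{Q_{\hz}\times\Lambda}(\L)$ precisely because $C(\L^{(s)})^{0}=k\cdot\mathrm{id}$ (just proved), so its rank equals $\mathrm{rank}\,\Gamma_{Q_{\hz}\times\Lambda}(\L)=n$.

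For Step (II), one transports Construction \ref{con} along the relabelling. Since $\L'=\L$ as a Lie algebra one has $\mathrm{Der}(\L')=\mathrm{Der}(\L)$ and $C(\L')=C(\L)$ as algebras, and I would build a $\Gamma$-graded Lie-algebra isomorphism $\mathrm{SCDer}(\L')\cong\mathrm{SCDer}(\L)$ induced by the relabelling: for $\L_{(\rho)}$ this is precomposition of the defining linear forms with $\rho$; for $\L^{(s)}$ one rewrites the twisted degree derivation — the one acting on $\L_{\alpha}^{\mu}$ by $\theta(\mu-s(\alpha))$ — in terms of derivations of $\L$ and checks compatibility with the identification $\Gamma_{\Lambda}(\L^{(s)})\cong\Gamma_{Q_{\hz}\times\Lambda}(\L)$ found in Step (I). Dualising gives an isomorphism of the modules $\C$, and a pair $(\D,\tau)\in\mathcal{P}(\L)$ is carried to $(\D',\tau')\in\mathcal{P}(\L')$; here one must verify that the injectivity of the evaluation map $\mathrm{ev}$ required in (i) of Construction \ref{con} is preserved. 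The map $E(\L,\D,\tau)\to E(\L',\D',\tau')$ which is the identity on the $\L$-summand and these isomorphisms on the $\C$- and $\D$-summands then respects every structure map in Construction \ref{con} — the bracket on $\L$, the $\D$-action on $\L$, $\sigma_{\D}(x,y)(d)=(d(x)|y)$, the contragredient $\D$-action on $\C$, and the cocycle $\tau$ — carries $H=\hz\oplus\C^{0}\oplus\D^{0}$ to $H'$ and preserves the form, hence is an isomorphism of EALAs, giving $\L\sim_{\mathrm{EALA}}\L'$.

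The main obstacle is Step (II): tracking, across the relabelling of the $\Lambda$-grading, the algebra $\mathrm{SCDer}$, the $\Gamma$-grading of the centroid, the module $\C$, the invariant $2$-cocycle $\tau$ and the bilinear form simultaneously, and in particular confirming that injectivity of $\mathrm{ev}$ is preserved so that $\mathcal{P}(\L)\leftrightarrow\mathcal{P}(\L')$ is genuinely a bijection and not merely a map. For the $\L^{(s)}$ case there is the extra subtlety that a degree derivation for the twisted grading genuinely mixes the $Q_{\hz}$- and $\Lambda$-directions — it is \emph{not} a degree derivation for the original $\Lambda$-grading — so the isomorphism $\mathrm{SCDer}(\L^{(s)})\cong\mathrm{SCDer}(\L)$ must be set up with care; this amounts to a bookkeeping computation organised by the group automorphism $(\alpha,\lambda)\mapsto(\alpha,\lambda+s(\alpha))$ of $Q_{\hz}\times\Lambda$, but it is where the real work lies.
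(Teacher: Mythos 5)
Your treatment of part (b) and of the verification of (L1)--(L4) is essentially the paper's own argument: the paper also takes the new form to be the old one transported by the identity, uses (L4) to upgrade a $\Lambda$-graded ideal of the relabelled algebra to a $Q_{\hz}\times\Lambda$-graded ideal of $\L$, and for $\L_{(\rho)}$ builds exactly the isomorphism $\omega(t^{\mu}\partial_{\theta})=s^{\rho(\mu)}\partial_{\widetilde{\theta\circ\rho^{-1}}}$ you describe; the crucial identity that makes the identity-on-$\L$ map work there is $\omega(d)(\psi(x))=\psi(d(x))$, which holds because $\widetilde{\theta\circ\rho^{-1}}(\rho(\lambda))=\theta(\lambda)$.

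For part (a), however, your proposed EALA isomorphism has a genuine gap, and it is precisely at the point you flag as "where the real work lies." If $\omega$ sends $\partial_{\theta}$ to the degree derivation $\partial_{\theta}^{(s)}$ of the twisted grading, then on $x\in\L_{\alpha}^{\nu}$ one has $\partial_{\theta}^{(s)}(x)=\theta(\nu-s(\alpha))x$ while $\partial_{\theta}(x)=\theta(\nu)x$, so the map which is the identity on the $\L$-summand and $\omega$ on the $\D$-summand fails to respect the bracket $[d,x]=d(x)$: one would need $\omega(d)(x)=d(x)$, which forces $\theta\circ s=0$. (Taking $\D'=\D$ as literal operators is not an option either, since $\partial_{\theta}\notin\mathrm{SCDer}(\L^{(s)})$ in general.) The correct isomorphism cannot preserve the direct sum decomposition $\L\oplus\C\oplus\D$: since $\partial_{\theta}-\partial_{\theta}^{(s)}=\mathrm{ad}(h_{\theta\circ s})$ for a suitable $h_{\theta\circ s}\in\hz$ (because $\Delta$ spans $\hz^{*}$), the derivation part must be shifted by elements of $\hz$, and correspondingly $\L$ and $\hz$ pick up corrections in $\C$ so that the form and the $2$-cocycle match up. This is the content of \cite[Corollary 6.3]{AF}, and the paper does not reprove it — it verifies (L1)--(L4) for $\L^{(s)}$ and then cites that result verbatim for the statement $\L\sim_{\mathrm{EALA}}\L^{(s)}$. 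So either carry out that summand-mixing isomorphism explicitly or invoke the citation; the identity-on-$\L$ map alone does not close case (a).
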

{\textbf{Proof.}}
  (a) Since $\L= \L^{(s)}$ as a Lie algebra, we can view $( \ | \ )$ as a bilinear form on $\L^{(s)}$.
      Let $( \ | \ )^{(s)}$ be this bilinear form.
      Then it is easily checked that $\left(\L^{(s)}, \hz, ( \ | \ )^{(s)}\right)$ satisfies (L2)-(L4).
      To show that $\L^{(s)}$ is graded-central-simple $\Lambda$-graded,
      suppose that $I \subseteq \L^{(s)}$ is a $\Lambda$-graded ideal.
      Then $I$ is $Q_{\hz} \times \Lambda$-graded by (L4).
      By considering $I$ as a ideal of $\L$, we can see that $I = \{ 0 \}$ or $\L^{(s)}$.
      Also $C(\L^{(s)})^0= k \cdot \mathrm{id}$ is clear, and hence $\L^{(s)}$ satisfies (L1).
      The second statement can be proved in exactly the same way as \cite[Corollary 6.3]{MR2743759}. \\
  (b) Since $\L=\L_{(\rho)}$ as a Lie algebra, we can view the identity on $\L$ as an isomorphism from $\L$
      onto $\L_{(\rho)}$.
      We denote this isomorphism by $\psi: \L \to \L_{(\rho)}$, 
      and define a bilinear form $( \ | \ )_{(\rho)}$
      on $\L_{(\rho)}$ as $(\psi(x)|\psi(y))_{(\rho)}=(x|y)$ for $x,y \in \L$.
      Note that $\Gamma \subseteq \langle \mathrm{supp}_\Lambda (\L) \rangle$ as stated in the proof of Proposition \ref{construction of an EALA}.
      From the definition of $\L_{(\rho)}$, the central grading group of $\L_{(\rho)}$ is $\rho(\Gamma)$ .
      Thus it is easily checked that $\left( \L_{(\rho)}, \hz, ( \ | \ )_{(\rho)}\right)$ satisfies (L1)-(L4).
      To show that $\L \sim_{\mathrm{EALA}} \L_{(\rho)}$, we first define a map
      \[ \mathcal{P}(\L) \ni (\D, \tau) \mapsto (\D_{(\rho)}, \tau_{(\rho)}) \in \mathcal{P}(\L_{(\rho)}).
      \]
      As in the Construction \ref{con}, 
      we write $C(\L)=\oplus_{\mu \in \Gamma} \, k t^{\mu}$.
      Then we can write $C(\L_{(\rho)})=\oplus_{\mu \in \Gamma} \, k s^{\rho(\mu)}$ where 
      \[ s^{\rho(\mu)} \cdot \psi(x) = \psi(t^{\mu} \cdot x)
      \]
      for $\mu \in \Gamma, x \in \L$.
      Let $\theta \in \mathrm{Hom}(\Lambda, k)$.
      Since $\mathrm{Im} \, \rho=\lr{\Lambda}{\L_{(\rho)}}$, we can define $\theta \circ \rho^{-1}$ 
      as a homomorphism from $\lr{\Lambda}{\L_{(\rho)}}$ to $k$.
      Since the rank of $\lr{\Lambda}{\L_{(\rho)}}$ is $n$, 
      there exists unique homomorphism $\widetilde{\theta \circ \rho^{-1}} \in \mathrm{Hom}(\Lambda, k)$
      such that $\widetilde{\theta \circ \rho^{-1}} \big|_{\lr{\Lambda}{\L_{(\rho)}}}=\theta \circ \rho^{-1}$.
      Using this notation, we define a $k$-linear isomorphism 
      $\omega: \mathrm{SCDer}(\L) \to \mathrm{SCDer}(\L_{(\rho)})$ as
      \[ \omega(t^{\mu} \partial_{\theta})= s^{\rho(\mu)} \partial_{\widetilde{\theta \circ \rho^{-1}}}.
      \]
      Since
      \[ \begin{split}
           [s^{\rho(\mu_1)} \partial_{\widetilde{\theta_1 \circ \rho^{-1}}}, 
           s^{\rho(\mu_2)} \partial_{\widetilde{\theta_2 \circ \rho^{-1}}}] &=
           \theta_1(\mu_2)s^{\rho(\mu_1+ \mu_2)} \partial_{\widetilde{\theta_2 \circ \rho^{-1}}}
           - \theta_2(\mu_1) s^{\rho(\mu_1+\mu_2)} \partial_{\widetilde{\theta_1 \circ \rho^{-1}}} \\
           & = \theta_1(\mu_2) \omega (t^{\mu_1+ \mu_2} \partial_{\theta_2})- 
               \theta_2(\mu_1) \omega(t^{\mu_1 + \mu_2} \partial_{\theta_1}),
         \end{split}
      \]
      $\omega$ is a Lie algebra isomorphism.
      We have 
      \begin{equation} \label{74} 
        \omega(d)(\psi(x)) = \psi(d(x))
      \end{equation}
      for $d \in \D, x \in \L$ since if $y \in \L^{\lambda}$ for $\lambda \in \mathrm{supp}_{\Lambda}(\L)$,
      \[ \begin{split}
          \omega(t^{\mu} \partial_{\theta})(\psi(y)) &= 
          s^{\rho(\mu)} \partial_{\widetilde{\theta \circ \rho^{-1}}} \big(\psi(y) \big) = 
           \widetilde{\theta \circ \rho^{-1}}\big(\rho(\lambda)\big)s^{\rho(\mu)} \cdot \psi(y) \\
           & = \theta(\lambda)\psi (t^{\mu} \cdot y) = \psi\big(t^{\mu} \partial_{\theta}(y)\big).
         \end{split}
      \]
      We put $\D_{(\rho)}=\omega(\D)$, and set $\C_{(\rho)}=\oplus_{\mu \in \Gamma} \big(\D_{(\rho)}^{\rho(\mu)}\big)^*$.
      We define $\hat{\omega}: \C \to \C_{(\rho)}$ by 
      \[ \hat{\omega}(c)(\omega(d))=c(d)
      \]
      for $c \in \C, d \in \D$,
      and define $\tau_{(\rho)}: \D_{(\rho)} \times \D_{(\rho)} \to \C_{(\rho)}$ as
      \[ \tau_{(\rho)}\big(\omega(d_1), \omega(d_2)\big)\big(\omega(d_3)\big)=\tau(d_1, d_2)(d_3)
      \]
      for $d_i \in \D$. Then $(\D_{(\rho)}, \tau_{(\rho)}) \in \mathcal{P}(\L_{(\rho)})$ is clear.
      Next, we show that the map $x + c + d \mapsto \psi(x) + \hat{\omega}(c) + \omega(d)$ 
      for $x \in \L, c \in \C, d \in \D$ is a Lie algebra isomorphism.
      To prove this fact, it suffices to show that
      \begin{equation} \label{75}
        \sigma_{\D_{(\rho)}}\big(\psi(x_1), \psi(x_2)\big)= \hat{\omega}\big(\sigma_{\D}(x_1, x_2)\big)
      \end{equation}
      for $x_i \in \L$ since we have using (\ref{74}) that 
      \[ \begin{split}
           & [\psi(x_1) + \hat{\omega}(c_1) + \omega(d_1), \psi(x_2) + \hat{\omega}(c_2)+\omega(d_2)] \\
           & =\Big([\psi(x_1), \psi(x_2)] + \omega(d_1)(\psi(x_2)) - \omega(d_2)(\psi(x_1))\Big) \\
           & + \Big(\sigma_{\D_{(\rho)}}\big(\psi(x_1), \psi(x_2)\big) + \omega(d_1) \cdot \hat{\omega}(c_2)
             - \omega(d_2) \cdot \hat{\omega}(c_1) +\tau_{(\rho)}\big(\omega(d_1), \omega(d_2)\big)\Big) \\
           & + [\omega(d_1), \omega(d_2)] \\
           & = \Big(\psi([x_1, x_2]) + \psi(d_1(x_2)) - \psi(d_2(x_1))\Big) \\
           & + \Big(\sigma_{\D_{(\rho)}}\big(\psi(x_1), \psi(x_2)\big) + \hat{\omega}(d_1 \cdot c_2)
             - \hat{\omega}(d_2 \cdot c_1) + \hat{\omega}\big(\tau(d_1, d_2)\big)\Big) + \omega([d_1, d_2]).
         \end{split}
      \]
      (\ref{75}) follows since 
      \[ \begin{split}
           \sigma_{\D_{(\rho)}}\big(\psi(x_1), \psi(x_2)\big)\big(\omega(d)\big) &= \big(\omega(d)(\psi(x_1)) \big|
           \psi(x_2)\big)_{(\rho)}
           = ( \psi(d(x_1))| \psi(x_2))_{(\rho)} \\ &= (d(x_1)| x_2) = \sigma_{\D}(x_1, x_2)(d).
         \end{split}
      \]
      It is easy to see that this isomorphism preserves the bilinear forms and sends $H$ to
      $H_{(\rho)}=\hz \oplus \C_{(\rho)} \oplus \D_{(\rho)}$.
      Finally, to show that the map $(\D, \tau) \mapsto (\D_{(\rho)}, \tau_{(\rho)})$ is bijective,
      we construct the inverse of this map.
      By (\ref{9'}), $\rho$ induces a group isomorphism 
      $\bar{\rho}: \lr{\Lambda}{\L} \to \lr{\Lambda}{\L_{(\rho)}}$.
      For the canonical injective homomorphism $\iota: \lr{\Lambda}{\L} \to \Lambda$, 
      it is easily checked that
      \[ ( \L_{(\rho)})_{(\iota \circ \bar{\rho}^{-1})} = \L.
      \]
      Then we can see that 
      \[ \mathcal{P} ( \L_{(\rho)}) \ni (\D', \tau') \mapsto (\D'_{(\iota \circ \bar{\rho}^{-1})}, 
         \tau'_{(\iota \circ \bar{\rho}^{-1})}) \in \mathcal{P}(\L)
      \]
      is the inverse of the map $(\D, \tau) \mapsto (\D_{(\rho)}, \tau_{(\rho)})$.
\qed      
\\

Let $\L= L_{\bm{m}}(\gn, \bm{\sigma}, \hz)$ be a multiloop Lie algebra of nullity $n$ such that $\gsigma \neq
\{ 0 \}$.
Let $( \ | \ )$ be the Killing form of $\gn$, and define a non-degenerate, invariant, symmetric, 
$\mathbb{Z}^n$-graded bilinear form on $\L$ (which we also write as $( \ | \ )$) by
\begin{equation} \label{76}
  (x \otimes t^{\lambda} | y \otimes t^{\mu}) = 
    \begin{cases}
      (x|y) & \text{if $\lambda + \mu = 0$}, \\
      0     & \text{otherwise}
    \end{cases}
\end{equation}
where $\lambda, \mu \in \mathbb{Z}^n, x \in \gn^{\bar{\lambda}}, y \in \gn^{\bar{\mu}}$.
Then, $(\L, \hz, ( \ | \ ))$ satisfies (L1)-(L4) by Lemma \ref{central grading group} and Corollary \ref{cor1}.
The following proposition shows that a bilinear form on $\L$  satisfying (L3) is only that defined in (\ref{76})
up to a scalar multiplication.
\begin{prop} \label{prop of bilinear form}
  Suppose that a bilinear form $( \ | \ )'$ on $\L$ is non-degenerate, invariant, symmetric, 
  and $\mathbb{Z}^n$-graded.
  Then we have $( \ | \ )'= c ( \ | \ )$ for $0 \neq c \in k$, where $( \ | \ )$ is the bilinear form defined in 
  \textup{(\ref{76})}.
\end{prop}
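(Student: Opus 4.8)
The plan is to realize any non-degenerate invariant $\mathbb{Z}^n$-graded form on $\L$ as the composition of $( \ | \ )$ with an element of the centroid $C(\L)$, and then to use graded-central-simplicity to force that element to be a scalar. The key structural input is that, regarded solely as a $\mathbb{Z}^n$-graded Lie algebra (the $Q_{\hz}$-grading plays no role here), $\L$ is nothing but the multiloop algebra $M_{\bm{m}}(\gn,\bm{\sigma})$: indeed $\L^{\lambda}=\bigoplus_{\alpha}\gn_{\alpha}^{\bar{\lambda}}\otimes t^{\lambda}=\gn^{\bar{\lambda}}\otimes t^{\lambda}$. Since $\gn$ is central-simple, Lemma \ref{central grading group} then shows that $\L$ is graded-central-simple as a $\mathbb{Z}^n$-graded algebra, so in particular $C(\L)^0=k\cdot\mathrm{id}$.

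First I would record the elementary fact that a non-degenerate $\mathbb{Z}^n$-graded form $b$ on $\L$ restricts to a perfect pairing $\L^{\lambda}\times\L^{-\lambda}\to k$ for every $\lambda$: if $x\in\L^{\lambda}$ pairs to zero with $\L^{-\lambda}$, then by $\mathbb{Z}^n$-gradedness it pairs to zero with every $\L^{\mu}$, hence with all of $\L$, hence $x=0$. Since moreover each homogeneous component $\L^{\lambda}=\gn^{\bar{\lambda}}\otimes t^{\lambda}$ is finite-dimensional, I can apply this both to $( \ | \ )$ and to $( \ | \ )'$ and define a degree-zero $k$-linear endomorphism $\phi$ of $\L$ by letting $\phi(x)$, for $x\in\L^{\lambda}$, be the unique element of $\L^{\lambda}$ with $(\phi(x)\mid y)=(x\mid y)'$ for all $y\in\L^{-\lambda}$, and hence for all $y\in\L$.

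Next I would verify that $\phi\in C(\L)^0$. For $z\in\L^{\nu}$, $x\in\L^{\lambda}$ and $y\in\L^{-\lambda-\nu}$, invariance of $( \ | \ )'$ and of $( \ | \ )$ gives
\[ (\phi([z,x])\mid y)=([z,x]\mid y)'=-(x\mid[z,y])'=-(\phi(x)\mid[z,y])=([z,\phi(x)]\mid y), \]
and since $( \ | \ )$ is a perfect pairing on $\L^{\lambda+\nu}\times\L^{-\lambda-\nu}$ this forces $\phi([z,x])=[z,\phi(x)]$. As $\phi$ thus commutes with every $\mathrm{ad}(z)$ and preserves every $\L^{\lambda}$, it lies in $C(\L)^0=k\cdot\mathrm{id}$, so $\phi=c\cdot\mathrm{id}$ for some $c\in k$, i.e.\ $( \ | \ )'=c\,( \ | \ )$; and $c\neq 0$ because $( \ | \ )'$ is non-degenerate, which completes the proof.

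The argument is short once this framework is in place, and I expect the only point requiring real care — the step I would treat as the main obstacle — to be the clean identification of $\L$, stripped of its $Q_{\hz}$-grading, with $M_{\bm{m}}(\gn,\bm{\sigma})$, so that Lemma \ref{central grading group} legitimately delivers $C(\L)^0=k\cdot\mathrm{id}$; everything else is routine linear algebra with the homogeneous pairings. It is perhaps worth remarking that symmetry of $( \ | \ )'$ is not actually needed for this argument (it is automatic from the conclusion).
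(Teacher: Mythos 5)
Your proof is correct, and it takes a genuinely different route from the paper. The paper argues inside the root-graded structure: it fixes $\mathfrak{sl}_2(k)$-triples $\{x_{\alpha}^{\bar{\lambda}_{\alpha}},x_{-\alpha}^{-\bar{\lambda}_{\alpha}},h_{\alpha}\}$, propagates the constant $c$ from one $h_{\gamma}$ to all $h_{\alpha}$ using invariance together with the irreducibility of $\Delta$ (Corollary \ref{cor1}), then recovers the value of $( \ | \ )'$ on each pair $\L_{\alpha}^{\lambda}\times\L_{-\alpha}^{-\lambda}$ via (\ref{50}) and (\ref{24}), and finally extends to $\L_0$ using $\L_0\subseteq\bigoplus_{\alpha}[\L_{\alpha},\L_{-\alpha}]$. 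You instead forget the $Q_{\hz}$-grading entirely, identify $\L$ as the $\mathbb{Z}^n$-graded algebra $M_{\bm{m}}(\gn,\bm{\sigma})$, and encode the two forms into a degree-zero centroidal element $\phi$, which is scalar because $C(\L)^0=k\cdot\mathrm{id}$ by Lemma \ref{central grading group}; the verification that $\phi\in C(\L)^0$ only needs invariance and the perfectness of the homogeneous pairings, which you justify correctly from non-degeneracy, gradedness, and finite-dimensionality of the components. Your argument buys generality and economy: it works for any graded-central-simple graded Lie algebra with finite-dimensional homogeneous pieces, it never uses $\gsigma\neq\{0\}$, the root system $\Delta$, or its irreducibility, and (as you note) symmetry of $( \ | \ )'$ comes out rather than going in. What it costs is reliance on the graded-central-simplicity statement imported from [ABFP1] via Lemma \ref{central grading group}, whereas the paper's computation is self-contained given the $\mathfrak{sl}_2$-theory already developed in subsection \ref{subsection3}. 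Both inputs are available in the paper, so either proof is admissible here.
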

{\textbf{Proof.}} 
  We write $xt^{\lambda}=x \otimes t^{\lambda} \in \L$.
  For each $\alpha \in \Delta$, we take an $\mathfrak{sl}_2(k)$-triple $\{ x_{\alpha}^{\bar{\lambda}_{\alpha}},
  x_{-\alpha}^{-\bar{\lambda}_{\alpha}}, h_{\alpha} \}$ for some $\lambda_{\alpha} \in \mathbb{Z}^n$.
  We choose $\gamma \in \Delta$ arbitrarily, 
  and suppose that $( h_{\gamma}|h_{\gamma})'=c(h_{\gamma}|h_{\gamma})$.
  If $\beta \in \Delta$ satisfies $( h_{\beta}|h_{\gamma}) \neq 0$,
  \[ \begin{split}
       (h_{\beta}|h_{\beta})'& = (h_{\beta}|[x_{\beta}^{\bar{\lambda}_{\beta}}t^{\lambda_{\beta}},
       x_{-\beta}^{-\bar{\lambda}_{\beta}}t^{-{\lambda}_{\beta}}])'=2(x_{\beta}^{\bar{\lambda}_{\beta}}
       t^{\lambda_{\beta}}| x_{-\beta}^{-\bar{\lambda}_{\beta}}t^{-\lambda_{\beta}})' \\
       & = \frac{2}{\langle \beta, h_{\gamma} \rangle} ([h_{\gamma}, x_{\beta}^{\bar{\lambda}_{\beta}}
       t^{\lambda_{\beta}}]|x_{-\beta}^{-\bar{\lambda}_{\beta}}t^{-{\lambda}_{\beta}})' 
       = \frac{(h_{\beta}|h_{\beta})}{(h_{\beta}|h_{\gamma})}(h_{\gamma}|h_{\beta})' \\
       & =\frac{(h_{\beta}|h_{\beta})}{(h_{\beta}|h_{\gamma})}([x_{\gamma}^{\bar{\lambda}_{\gamma}}
       t^{\lambda_{\gamma}}, x_{-\gamma}^{-\bar{\lambda}_{\gamma}}t^{-\lambda_{\gamma}}]|h_{\beta})' 
       = \frac{2(h_{\beta}|h_{\beta})}{(h_{\gamma}|h_{\gamma})}(x_{\gamma}^{\bar{\lambda}_{\gamma}}
       t^{\lambda_{\gamma}}|x_{-\gamma}^{-\bar{\lambda}_{\gamma}}t^{-\lambda_{\gamma}})' \\
       &=\frac{(h_{\beta}|h_{\beta})}{(h_{\gamma}|h_{\gamma})}(h_{\gamma}|h_{\gamma})'
       =c(h_{\beta}|h_{\beta}).
     \end{split}
  \]
  By repeating calculations as above, we have $(h_{\alpha}|h_{\alpha})'=c(h_{\alpha}|h_{\alpha})$
  for any $\alpha \in \Delta$ since $\Delta$ is irreducible. 
  Then for arbitrary $\alpha \in \Delta, \lambda \in \mathbb{Z}^n$ and $x \in \gn_{\alpha}^{\bar{\lambda}},
  y \in \gn_{-\alpha}^{-\bar{\lambda}}$,
  \[ \begin{split} (xt^{\lambda}|yt^{-\lambda})' & =\frac{1}{2}([h_{\alpha}, xt^{\lambda}]|yt^{-\lambda})'
     =\frac{1}{2}(h_{\alpha}|[xt^{\lambda}, yt^{-\lambda}])' \\ 
     & =\frac{(x|y)}{2}(h_{\alpha}|\frac{(\alpha|\alpha)}{2}h_{\alpha})'
     \quad \quad \mathrm{(by \ (\ref{50}) \ and \ (\ref{24}))} \\
     & = c(x|y) = c(xt^{\lambda}|yt^{-\lambda}).
     \end{split}
  \]
  From this, we have $( \ | \ )' = c ( \ | \ )$ on $\bigoplus_{\alpha \in \Delta} \L_{\alpha}$. 
  Then we have $( \ | \ )' = c ( \ | \ )$ on $\L$ since $\L_0 \subseteq \bigoplus_{\alpha \in \Delta} 
  [\L_{\alpha}, \L_{-\alpha}]$ and both $( \ | \ )$ and $( \ | \ )'$ are invariant.
\qed  
\begin{rem} \label{independence of bilinear form} \normalfont
  Suppose that $\L$ is a multiloop Lie algebra and $(\D, \tau) \in \mathcal{P}(\L)$.
  By Proposition \ref{prop of bilinear form}, it is easily checked that $E(\L, \D, \tau)$
  does not depend on the bilinear form used in the construction up to isomorphism as EALAs.
\end{rem}
Now, we can easily show the following theorem:
\begin{thm} \label{thm of EALA construction1}
  Let $\L= \Lmh$ and $\L'=\Lmhb$ be multiloop Lie algebras of nullity $n$,
  and suppose that $\gsigma \neq \{ 0 \}$ and $\gn'^{\bm{\sigma}'} \neq \{ 0 \}$.
  If $\L \suppisom \L'$, then there exists a bijection $\mathcal{P}(\L) \to \mathcal{P}(\L')$
  such that $E(\L,\D,\tau)$ is isomorphic as EALAs to $E(\L',\D', \tau')$
  where $(\D', \tau') \in \mathcal{P}(\L')$ 
  is the image of $(\D, \tau) \in \mathcal{P}(\L)$ under this bijection.
\end{thm}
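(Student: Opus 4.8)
The statement to be proved is, after unwinding the definitions, exactly the relation $\L \sim_{\mathrm{EALA}} \L'$ for the canonical triples $(\L, \hz, ( \ | \ ))$ and $(\L', \hz', ( \ | \ ))$ equipped with the invariant forms defined in (\ref{76}); these satisfy (L1) $\sim$ (L4) because $\gsigma \neq \{0\}$ and ${\gn'}^{\bm{\sigma}'} \neq \{0\}$, as noted just before Proposition \ref{prop of bilinear form}. The relation $\sim_{\mathrm{EALA}}$ is visibly transitive --- composing two bijections of parameter sets that each induce EALA-isomorphisms produces a third such bijection, by transitivity of isomorphism of EALAs --- so it suffices to connect $\L$ to $\L'$ by a finite chain of $\sim_{\mathrm{EALA}}$-steps. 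The plan is to read off such a chain from the combinatorial description of support-isomorphism. Since $\L \suppisom \L'$, Theorem \ref{thm for supp-isom}(c) supplies $Q_{\hz} \times \mathbb{Z}^n$-graded Lie algebras $\L = \L_0, \L_1, \dots, \L_p$ with $\L_p \cong_{\mathbb{Z}^n-\mathrm{ig}} \L'$ in which each $\L_{i+1}$ arises from $\L_i$ by one of the two regradings of Definition \ref{def5}: either $\L_{i+1} = (\L_i)_{(\rho_i)}$ for a monomorphism $\rho_i \colon \lr{\mathbb{Z}^n}{\L_i} \to \mathbb{Z}^n$, or $\L_{i+1} = (\L_i)^{(s_i)}$ for some $s_i \in \mathrm{Hom}(Q_{\hz}, \mathbb{Z}^n)$. (Both operations leave the $Q_{\hz}$-grading, hence the root lattice, unchanged, so $\mathrm{Hom}(Q_{\hz}, \mathbb{Z}^n)$ is the correct group of twists for every term of the sequence.)

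I would then argue by induction along this sequence. The triple $(\L_0, \hz, ( \ | \ ))$ satisfies (L1) $\sim$ (L4); and if $(\L_i, \hz, ( \ | \ )_i)$ satisfies (L1) $\sim$ (L4), then Lemma \ref{lem of EALA}(b) (in the $(\rho_i)$-case) or Lemma \ref{lem of EALA}(a) (in the $(s_i)$-case) equips $\L_{i+1}$ with a bilinear form $( \ | \ )_{i+1}$ so that $(\L_{i+1}, \hz, ( \ | \ )_{i+1})$ again satisfies (L1) $\sim$ (L4), and furnishes $\L_i \sim_{\mathrm{EALA}} \L_{i+1}$. Concatenating these equivalences gives $\L \sim_{\mathrm{EALA}} \L_p$, with $(\L_p, \hz, ( \ | \ )_p)$ satisfying (L1) $\sim$ (L4).

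It then remains to pass from $\L_p$ to $\L'$. From $\L_p \cong_{\mathbb{Z}^n-\mathrm{ig}} \L'$ and Proposition \ref{prop3} one obtains a Lie algebra isomorphism $\psi \colon \L_p \to \L'$ together with group isomorphisms of the grading groups carrying both the $Q_{\hz}$-grading and the $\mathbb{Z}^n$-grading of $\L_p$ onto those of $\L'$; in particular $\psi(\hz) = \psi\big((\L_p)_0^0\big) = (\L')_0^0 = \hz'$, and $\psi$ takes $( \ | \ )_p$ to a nonzero scalar multiple of the form in (\ref{76}) on $\L'$ by Proposition \ref{prop of bilinear form}, which is harmless in view of Definition \ref{isom of EALA} and Remark \ref{independence of bilinear form}. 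Transporting through $\psi$ the degree derivations, the centroid $C(\L_p) \cong C(\L')$, the subalgebra $\D$ and the cocycle $\tau$ --- this is done just as the maps $\omega$ and $\hat{\omega}$ are built in the proof of Lemma \ref{lem of EALA}(b), only more easily since $\psi$ is an honest graded isomorphism --- yields a bijection $\mathcal{P}(\L_p) \to \mathcal{P}(\L')$ under which the evident extension of $\psi$ exhibits $E(\L_p, \D, \tau)$ as isomorphic, as EALAs, to $E(\L', \D', \tau')$. Hence $\L_p \sim_{\mathrm{EALA}} \L'$, and composing with the chain above gives $\L \sim_{\mathrm{EALA}} \L'$, which is the assertion. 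I expect this last transport-of-structure step to be the only one demanding genuine work: everything preceding it is either the formal transitivity of $\sim_{\mathrm{EALA}}$ or a direct invocation of Theorem \ref{thm for supp-isom} and Lemma \ref{lem of EALA}.
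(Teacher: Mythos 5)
Your proposal is correct and follows essentially the same route as the paper: use Theorem \ref{thm for supp-isom}(c) together with Lemma \ref{lem of EALA} (applied inductively along the chain of regradings) to get $\L \sim_{\mathrm{EALA}} \L_p$ with $\L_p \cong_{\mathbb{Z}^n-\mathrm{ig}} \L'$, then use Proposition \ref{prop3} (plus Proposition \ref{prop of bilinear form} and Remark \ref{independence of bilinear form}) to transport the data and conclude $\L_p \sim_{\mathrm{EALA}} \L'$. The paper compresses the final transport-of-structure step into ``it is easily checked,'' whereas you spell it out, but the argument is the same.
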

{\textbf{Proof.}}
  By Theorem \ref{thm for supp-isom} and Lemma \ref{lem of EALA}, there exists a $Q_{\hz} \times \mathbb{Z}^n$-graded
  Lie algebra $\L_{p}$ such that $\L_p \cong_{\mathbb{Z}^n- \mathrm{ig}} \L'$ and 
  $\L \sim_{\mathrm{EALA}} \L_p$.
  Using Lemma \ref{prop3}, it is easily checked that $\L_p \sim_{\mathrm{EALA}} \L'$. 
  Thus we have $\L \sim_{\mathrm{EALA}} \L'$.
\qed 
\\

We prove the following lemma using \cite[Theorem 6.1]{MR2743759}:
\begin{lem} \label{supp-isom of multiloop Lie tori}
  Let $\Lb$ and $\Lb'$ be multiloop Lie $\mathbb{Z}^n$-tori.
  If $E(\Lb, \D, \tau)$ is isomorphic as EALAs to $E(\Lb', \D', \tau')$ 
  for some $(\D, \tau) \in \mathcal{P}(\Lb)$ and $(\D', \tau') \in \mathcal{P}(\Lb')$, 
  then $\mathcal{L} \suppisom \mathcal{L}'$.
\end{lem}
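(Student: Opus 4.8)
The plan is to reduce the lemma to the isotopy-classification of centreless Lie $\mathbb{Z}^n$-tori from \cite{AF}, and then to translate isotopy of multiloop Lie $\mathbb{Z}^n$-tori back into condition (b) of Theorem \ref{thm for supp-isom}.

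First I would record the background facts. Write $\Lb = L_{\mathrm{ord}(\bm{\sigma})}(\gn,\bm{\sigma},\hz)$ and $\Lb' = L_{\mathrm{ord}(\bm{\sigma}')}(\gn',\bm{\sigma}',\hz')$ (axiom (A0) of Proposition \ref{condition of multi-Lie} forces $\bm{m}=\mathrm{ord}(\bm{\sigma})$, $\bm{m}'=\mathrm{ord}(\bm{\sigma}')$). These are centreless Lie $\mathbb{Z}^n$-tori: as $\mathbb{Z}^n$-graded Lie algebras they equal $M_{\mathrm{ord}(\bm{\sigma})}(\gn,\bm{\sigma})$ and $M_{\mathrm{ord}(\bm{\sigma}')}(\gn',\bm{\sigma}')$, which are graded-central-simple by Lemma \ref{central grading group}, and the centre of a $\mathbb{Z}^n$-graded Lie algebra is a graded ideal, hence $\{0\}$ (a non-zero graded ideal equal to the whole algebra would force $\L\L=\{0\}$, contradicting graded-simplicity). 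Since $\Lb,\Lb'$ satisfy axiom (A3), \cite[Lemma 3.2.4]{ABFP1} gives $\lr{\mathbb{Z}^n}{\Lb}=\lr{\mathbb{Z}^n}{\Lb'}=\mathbb{Z}^n$. The families $\{E(\Lb,\D,\tau)\mid(\D,\tau)\in\mathcal{P}(\Lb)\}$ and $\{E(\Lb',\D',\tau')\mid(\D',\tau')\in\mathcal{P}(\Lb')\}$ are exactly the families of EALAs attached to $\Lb,\Lb'$ by Neher's construction, so the hypothesis that $E(\Lb,\D,\tau)\cong E(\Lb',\D',\tau')$ as EALAs for some choices puts us in the situation of \cite[Theorem 6.1]{AF}; that theorem yields that $\Lb$ and $\Lb'$ are \emph{isotopic} as centreless Lie $\mathbb{Z}^n$-tori.

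Next I would convert ``isotopic'' into the criterion of Theorem \ref{thm for supp-isom}. The key observation is that an isotope of the multiloop Lie $\mathbb{Z}^n$-torus $\Lb$ is again a multiloop Lie algebra up to $Q_{\hz}\times\mathbb{Z}^n$-graded isomorphism: passing to an isotope amounts to shifting the $\mathbb{Z}^n$-grading by a homomorphism $Q_{\hz}\to\mathbb{Z}^n$, i.e.\ it is the operation $\L\mapsto\L^{(s)}$ of Definition \ref{def5} (b), and by Lemma \ref{lem6} this replaces $\bm{\sigma}$ by $\bm{\tau}\bm{\sigma}$ for a diagonal automorphism $\bm{\tau}=(\tau_1,\dots,\tau_n)$ with $\tau_i(x_\alpha)=\zeta^{-s_i(\alpha)}x_\alpha$, exactly as in \eqref{68}. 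Unwinding the definition of isotopy in \cite{ABFP2} and \cite{AF} along these lines, ``$\Lb$ isotopic to $\Lb'$'' produces $s=(s_1,\dots,s_n)\in\mathrm{Hom}(Q_{\hz},\mathbb{Q}^n)$, a matrix $P\in\mathrm{GL}_n(\mathbb{Z})$, and a Lie algebra isomorphism $\varphi\colon\gn\to\gn'$ with $\bm{\sigma}'=\varphi(\bm{\tau}\bm{\sigma})^P\varphi^{-1}$, where $\bm{\tau}$ is built from $s$ via \eqref{68}. This is precisely statement (b) of Theorem \ref{thm for supp-isom}, so $\Lb\suppisom\Lb'$ by that theorem.

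The step I expect to be the main obstacle is this middle one: matching the abstract notion of isotopy of Lie $\mathbb{Z}^n$-tori used in \cite{ABFP2} and \cite{AF} --- formulated through the coordinate algebra and its central closure --- with the concrete diagonal twist $\bm{\sigma}\mapsto\bm{\tau}\bm{\sigma}$ on the multiloop data, and in particular keeping track of the central grading group together with the possible simultaneous regrading $\L\mapsto\L_{(\rho)}$ of Definition \ref{def5} (a) that a change of the grading scale back to $\mathrm{ord}(\bm{\sigma})$ may require. Once the isotopes of a multiloop Lie $\mathbb{Z}^n$-torus are identified with algebras of the form $L_{\bm{m}}(\gn,\bm{\tau}\bm{\sigma},\hz)$ (possibly after a $_{(\rho)}$-regrading), the conclusion is a formal consequence of Theorem \ref{thm for supp-isom}, with Lemma \ref{supp and ig} and Proposition \ref{prop3} handling the passage between the various flavours of graded isomorphism.
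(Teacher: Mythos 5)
Your proposal is correct and rests on the same external pillar as the paper's proof, namely \cite[Theorem 6.1]{AF}, but you finish differently and more laboriously than necessary. The paper reads that theorem in its concrete form: it directly supplies $s\in\mathrm{Hom}(Q,\mathbb{Z}^n)$, a Lie algebra isomorphism $\varphi\colon\Lb^{(s)}\to\Lb'$ and group isomorphisms $\varphi_Q$, $\varphi_{\mathbb{Z}^n}$ with $\varphi\bigl((\Lb^{(s)})_{\alpha}^{\lambda}\bigr)={\Lb'}_{\varphi_Q(\alpha)}^{\varphi_{\mathbb{Z}^n}(\lambda)}$; since the identity map is a support-isomorphism $\Lb\suppisom\Lb^{(s)}$ (the map $(\alpha,\lambda)\mapsto(\alpha,\lambda-s(\alpha))$ is a group automorphism of $Q\times\mathbb{Z}^n$ --- exactly the observation in the proof of Theorem \ref{thm for supp-isom}, (c) $\Rightarrow$ (a)), the conclusion $\Lb\suppisom\Lb'$ is immediate, with no need to return to the multiloop data. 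Your plan instead pushes the isotopy down to condition (b) of Theorem \ref{thm for supp-isom} via Lemma \ref{lem6} and Lemma \ref{supp-isom for Lie}, producing $\bm{\tau}$, $P$ and $\varphi$ with $\bm{\sigma}'=\varphi(\bm{\tau}\bm{\sigma})^P\varphi^{-1}$, and then invokes (b) $\Rightarrow$ (a); this is a valid route (your ``main obstacle'' of matching abstract isotopy with the diagonal twist largely dissolves once Theorem 6.1 of \cite{AF} is stated in the graded form above, and the shift coming from isotopy is integral, so only the bookkeeping between $\zeta^{-s_i(\alpha)}$ in \eqref{68} and $\zeta_{m_i}^{-s_i(\alpha)}$ in Lemma \ref{lem6} needs care), but it re-derives part of the (a) $\Rightarrow$ (b) machinery that the lemma does not actually require. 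In short: same key input, correct conclusion, with your finishing step being an avoidable detour compared with the paper's one-line use of the (c) $\Rightarrow$ (a) argument.
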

{\textbf{Proof.}}
  Let $Q$ (resp. $Q'$) be the root lattice (i.e. $\mathbb{Z}$-span of its root system) of $\Lb$ (resp. $\Lb'$).
  By \cite[Theorem 6.1]{MR2743759}, there exists $s \in \mathrm{Hom}(Q, \mathbb{Z}^n)$, a Lie algebra isomorphism
  $\varphi: \Lb^{(s)} \to \Lb'$ and two group isomorphisms $\varphi_Q: Q \to Q'$, $\varphi_{\mathbb{Z}^n}:
  \mathbb{Z}^n \to \mathbb{Z}^n$ such that 
  \[ \varphi \big( (\Lb^{(s)})_{\alpha}^{\lambda} \big) = {\Lb'}_{\varphi_Q(\alpha)}^{\varphi_{\mathbb{Z}^n}(\lambda)}
  \]
  for $\alpha \in Q, \lambda \in \mathbb{Z}^n$ (in \cite{MR2743759}, this equivalence relation is called isotopy).
  Then $\Lb \cong_{\mathrm{supp}} \Lb'$ follows. (See the proof of Theorem \ref{thm for supp-isom} (c) $\Rightarrow$ (a).)
\qed \\

Using Theorem \ref{Main theorem}, we can extend this lemma to multiloop Lie algebras 
in which the $0$-homogeneous spaces are non-zero.
\begin{thm} \label{thm of EALA construction2}
  Let $\L=\Lmh$ and $\L'=\Lmhb$ be multiloop Lie algebras of nullity $n$, and
  suppose that $\gsigma \neq \{ 0 \}$ and ${\gn'}^{\bm{\sigma}'} \neq \{ 0 \}$.
  If $E(\L, \D, \tau)$ is isomorphic as EALAs to $E(\L', \D', \tau')$ 
  for some $(\D, \tau) \in \mathcal{P}(\L)$ and $(\D', \tau') \in \mathcal{P}(\L')$, 
  then $\L \suppisom \L'$.
\end{thm}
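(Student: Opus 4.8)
The plan is to reduce the statement to the already-treated case of multiloop Lie $\mathbb{Z}^n$-tori, where Lemma \ref{supp-isom of multiloop Lie tori} supplies exactly this converse. First I would invoke Theorem \ref{Main theorem}: since $\gsigma \neq \{0\}$ and ${\gn'}^{\bm{\sigma}'} \neq \{0\}$, there exist multiloop Lie $\mathbb{Z}^n$-tori $\Lb$ and $\Lb'$ with $\L \suppisom \Lb$ and $\L' \suppisom \Lb'$. I would then record the elementary point that $\Lb$ and $\Lb'$ have nonzero $0$-homogeneous space: by axiom (A1) of Proposition \ref{condition of multi-Lie} the defining fixed-point subalgebra of a multiloop Lie $\mathbb{Z}^n$-torus is simple, hence nonzero, so its Cartan subalgebra $\hz$ — which is identified with $\Lb_0^0$ — is nonzero as well, and likewise for $\Lb'$.

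Next I would apply Theorem \ref{thm of EALA construction1} to the support-isomorphism $\L \suppisom \Lb$; its hypotheses hold by the previous paragraph together with the standing assumption $\gsigma \neq \{0\}$, so $\L \sim_{\mathrm{EALA}} \Lb$. In particular the given $(\D, \tau) \in \mathcal{P}(\L)$ is carried by the resulting bijection $\mathcal{P}(\L) \to \mathcal{P}(\Lb)$ to some $(\D_1, \tau_1) \in \mathcal{P}(\Lb)$ with $E(\L, \D, \tau)$ isomorphic as EALAs to $E(\Lb, \D_1, \tau_1)$. Symmetrically, applying Theorem \ref{thm of EALA construction1} to $\L' \suppisom \Lb'$ produces $(\D_1', \tau_1') \in \mathcal{P}(\Lb')$ with $E(\L', \D', \tau') \cong E(\Lb', \D_1', \tau_1')$ as EALAs.

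Then I would chain the isomorphisms of EALAs
\[ E(\Lb, \D_1, \tau_1) \;\cong\; E(\L, \D, \tau) \;\cong\; E(\L', \D', \tau') \;\cong\; E(\Lb', \D_1', \tau_1'), \]
where the middle isomorphism is the hypothesis of the theorem. By Lemma \ref{supp-isom of multiloop Lie tori} this forces $\Lb \suppisom \Lb'$. Finally, since support-isomorphism is transitive (a composite of support-isomorphisms is again a support-isomorphism, the associated group isomorphisms of the supports composing accordingly), we conclude $\L \suppisom \Lb \suppisom \Lb' \suppisom \L'$, i.e. $\L \suppisom \L'$.

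The argument is a synthesis of Theorem \ref{Main theorem}, Theorem \ref{thm of EALA construction1} and Lemma \ref{supp-isom of multiloop Lie tori}, so I do not expect a serious obstacle; the only points demanding care are verifying that the hypotheses of Theorem \ref{thm of EALA construction1} really apply to the pairs $(\L,\Lb)$ and $(\L',\Lb')$ — in particular the nonvanishing of $\Lb_0^0$ and $\Lb_0'^{\,0}$, which comes from (A1) — and noting explicitly that $\suppisom$ is an equivalence relation so that the three intermediate support-isomorphisms may be composed.
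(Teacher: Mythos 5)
Your proposal is correct and follows essentially the same route as the paper: reduce to multiloop Lie $\mathbb{Z}^n$-tori via Theorem \ref{Main theorem}, transfer the EALA isomorphisms using Theorem \ref{thm of EALA construction1}, apply Lemma \ref{supp-isom of multiloop Lie tori}, and conclude by transitivity of $\suppisom$. Your explicit verification that the tori satisfy the hypotheses of Theorem \ref{thm of EALA construction1} (via axiom (A1)) is a detail the paper leaves implicit, but it does not change the argument.
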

{\textbf{Proof.}}
  By Theorem \ref{Main theorem}, there exists a multiloop Lie $\mathbb{Z}^n$-torus $\Lb$ (resp.\ $\Lb'$) such that
  $\L \suppisom \Lb$ (resp.\ $\L' \suppisom \Lb'$).
  Then by Theorem \ref{thm of EALA construction1}, there exists $(\bar{\D}, \bar{\tau}) \in \mathcal{P}(\Lb)$
  (resp.\ $(\bar{\D}', \bar{\tau}') \in \mathcal{P}(\Lb')$) such that 
  $E(\L, \D, \tau)$ and $E(\Lb, \bar{\D}, \bar{\tau})$ (resp.\ $E(\L', \D', \tau')$ and $E(\Lb', \bar{\D}', \bar{\tau}')$) 
  are isomorphic as EALAs.
  Therefore, $E(\Lb, \bar{\D}, \bar{\tau})$ and $E(\Lb', \bar{\D}', \bar{\tau}')$ are isomorphic
  as EALAs, and then $\Lb \suppisom \Lb'$ by Lemma \ref{supp-isom of multiloop Lie tori}.
  Thus, we have $\L \suppisom \L'$.
\qed \\

\textbf{Acknowledgements:} The author is very grateful to his supervisor Hisayosi Matumoto
for his constant encouragement and patient guidance. 
He is supported by the Japan Society for the Promotion of Science Research
Fellowships for Young Scientists.

\def\cprime{$'$} \def\cprime{$'$}


\end{document}